\numberwithin{equation}{section}
\newenvironment{Ack}%
{\par \vspace{\baselineskip}%
 \noindent \textbf{Acknowledgements.}}%
{\par \vspace{\baselineskip}}
\newlist{clist}{enumerate}{1}
\setlist*[clist]{label=(\roman*), nosep}
\theoremstyle{definition}
\newtheorem{thm}{Theorem}[section]
\newtheorem{dfn}[thm]{Definition}
\newtheorem{prp}[thm]{Proposition}
\newtheorem{lem}[thm]{Lemma}
\newtheorem{cor}[thm]{Corollary}
\newtheorem{fct}[thm]{Fact}
\newtheorem{rmk}[thm]{Remark}
\newtheorem{eg}[thm]{Example}
\newtheorem*{rmk*}{Remark}
\crefname{thm}{Theorem}{Theorems}
\crefname{dfn}{Definition}{Definitions}
\crefname{prp}{Proposition}{Propositions}
\crefname{lem}{Lemma}{Lemmas}
\crefname{cor}{Corollary}{Corollaries}
\crefname{clm}{Claim}{Claims}
\crefname{fct}{Fact}{Facts}
\crefname{rmk}{Remark}{Remarks}
\crefname{eg}{Example}{Examples}
\crefname{figure}{Figure}{Figures}
\crefname{table}{Table}{Tables}
\crefname{section}{\S\!}{\S\S\!}
\crefname{subsection}{\S\!}{\S\S\!}
\crefname{subsubsection}{\S\!}{\S\S\!}
\crefname{appendix}{Appendix}{Appendices}
\crefname{equation}{}{}
\newcommand{\bl}{\bullet}
\newcommand{\ep}{\epsilon}
\newcommand{\ve}{\varepsilon}
\newcommand{\ol}{\overline}
\newcommand{\ul}{\underline}
\newcommand{\wt}{\widetilde}
\newcommand{\ceq}{\coloneqq} %using mathtool package
\newcommand{\xr}{\xrightarrow}
\newcommand{\xrr}[1]{\xrightarrow{\ #1 \ }{}}
\newcommand{\srj}{\twoheadrightarrow}
\newcommand{\lto}{\longrightarrow}
\newcommand{\sto}{\xr{\sim}}
\newcommand{\lsto}{\xrr{\sim}}
\newcommand{\linj}{\lhook\joinrel\longrightarrow}
\newcommand{\lsrj}{\relbar\joinrel\twoheadrightarrow}
\newcommand{\mto}{\mapsto}
\newcommand{\lmto}{\longmapsto}
\newcommand{\low}{\textup{lower}}
\newcommand{\tbig}{\textup{big}}
\newcommand{\tmin}{\textup{min}}
\newcommand{\tred}{\textup{red}}
\newcommand{\tsum}{{\textstyle\sum}}
\newcommand{\bbC}{\mathbb{C}}
\newcommand{\bbG}{\mathbb{G}}
\newcommand{\bbN}{\mathbb{N}}
\newcommand{\bbP}{\mathbb{P}}
\newcommand{\bbQ}{\mathbb{Q}}
\newcommand{\bbV}{\mathbb{V}}
\newcommand{\bbZ}{\mathbb{Z}}
\newcommand{\clH}{\mathcal{H}}
\newcommand{\clK}{\mathcal{K}}
\newcommand{\clL}{\mathcal{L}}
\newcommand{\shO}{\EuScript{O}}
\newcommand{\frg}{\mathfrak{g}}
\newcommand{\fgl}{\mathfrak{gl}}
\newcommand{\fsl}{\mathfrak{sl}}
\newcommand{\fso}{\mathfrak{so}}
\newcommand{\psl}{\mathfrak{psl}}
\newcommand{\pgl}{\mathfrak{pgl}}
\newcommand{\osp}{\mathfrak{osp}}
\newcommand{\frR}{\mathfrak{R}}
\newcommand{\frs}{\mathfrak{s}}
\newcommand{\Mat}{\mathrm{Mat}}
\newcommand{\wtA}{\wt{A}}
\newcommand{\wtO}{\wt{O}}
\newcommand{\Flow}{F_{\low}}
\newcommand{\Vir}{\mathrm{Vir}}
\newcommand{\ev}{\ol{0}}
\newcommand{\od}{\ol{1}}
\newcommand{\Zt}{\bbZ/2\bbZ}
\newcommand{\pd}{\partial}
\newcommand{\sd}{\slashed{\pd}} %using slashed package
\newcommand{\hf}{\frac{1}{2}}
\newcommand{\vac}{\bm{1}}
\newcommand{\abs}[1]{\left| #1 \right|} 
\newcommand{\dbr}[1]{\llbracket #1 \rrbracket} %using stmaryrd package
\newcommand{\dpr}[1]{(\!( #1 )\!)} 
\newcommand{\st}[1]{{}^{st}\!{#1}} 
\newcommand{\trp}[1]{{}^{t}\!{#1}} 
\newcommand{\Hop}[2]{\mathbin{\overunderset{#1}{#2}{\bl}}}
\newcommand{\Zop}[2]{\mathbin{\overunderset{#1}{#2}{*}}}
\DeclareMathOperator{\id}{id}
\DeclareMathOperator{\gr}{gr}
\DeclareMathOperator{\tr}{tr}
\DeclareMathOperator{\str}{str}
\DeclareMathOperator{\Der}{Der}
\DeclareMathOperator{\End}{End}
\DeclareMathOperator{\Exp}{Exp}
\DeclareMathOperator{\Ker}{Ker}
\DeclareMathOperator{\Log}{Log}
\DeclareMathOperator{\Zhu}{Zhu}
\DeclareMathOperator{\Span}{Span}
\DeclareMathOperator*{\res}{res}
\DeclareMathOperator*{\sres}{sres}
\newenvironment{sbm}{\left[\begin{smallmatrix}}{\end{smallmatrix}\right]}
\begin{document}

\title{Zhu algebras of superconformal vertex algebras}
\author{Ryo Sato$^1$}
\address{$^1$
 Center for General Education, Aichi Institute of Technology. 
 Yakusa-cho, Toyota, Japan, 470-0392.}
\email{rsato@aitech.ac.jp}
\author{Shintarou Yanagida$^2$}
\address{$^2$
 Graduate School of Mathematics, Nagoya University.
 Furocho, Chikusaku, Nagoya, Japan, 464-8602.}
\email{yanagida@math.nagoya-u.ac.jp}
%\email{rsato@aitech.ac.jp, yanagida@math.nagoya-u.ac.jp}
%\date{2024.09.22 -- 2025.09.16; 2025.11.04, 2025.11.13, 2026.03.10 --}
\date{April 7, 2026}
%\thanks{}
%\keywords{Vertex algebras, Zhu algebras, supersymmetric vertex algebras, 
% superconformal algebras, }

\begin{abstract}
The purpose of this note is to demonstrate the advantages of 
Y.-Z.~Huang's definition of the Zhu algebra 
 (Comm.\ Contemp.\ Math., 7 (2005), no.~5, 649--706)
for an arbitrary vertex algebra, 
not necessarily equipped with a Hamiltonian operator or a Virasoro element, 
by achieving the following two goals:
(1) determining the Zhu algebras of $N=1, 2, 3, 4$ 
and big $N=4$ superconformal vertex algebras, 
and
(2) introducing the Zhu algebras of $N_K=N$ supersymmetric vertex algebras.
\end{abstract}

\maketitle
{\small \tableofcontents}

%%%%%%%%%%%%%%%%%%%%%%%%%%%%%%%%%%%%%%%%%%%%%%%%%%%%%%%%%%%%%%%%%%%%%%%%%%%%%%%%%%%%%%%%%%
%%%%%%%%%%%%%%%%%%%%%%%%%%%%%%%%%%%%%%%%%%%%%%%%%%%%%%%%%%%%%%%%%%%%%%%%%%%%%%%%%%%%%%%%%%
\setcounter{section}{-1}
\section{Introduction}

%%%%%%%%%%%%%%%%%%%%%%%%%%%%%%%%%%%%%%%%%%%%%%%%%%%%%%%%%%%%%%%%%%%%%%%%%%%%%%%%%%%%%%%%%%
\subsection{Zhu algebra}\label{ss:intro:A}

The Zhu algebra $A(V)$ is an associative algebra associated to 
a vertex operator algebra (VOA) $V$. 
It was introduced by Yongchang Zhu \cite[\S2]{Zhu}, and plays a fundamental role 
in understanding the representation theory of VOAs.
%Let us recall its definition and fundamental properties 
%(see \cref{ss:wtA:A} for details).
%
%Let $V$ be a VOA with vacuum vector $\vac$, conformal vector $\omega$ 
%and state-field correspondence $Y(a,z)=\sum_{n\in\bbZ}z^{-n-1}a_{(n)}$ for $a \in V$.
%This gives a family of operators $a_{(n)} \in \End V$, $n \in \bbZ$.
%Given $a,b \in V$, where $a$ is homogeneous of conformal weight $\Delta(a)$, 
%we define $a\Zop{}{n}b \in V$ for $n \in \bbZ_{\ge1}$ by 
%\[
% a \Zop{}{n} b \ceq \res_{z=0}\bigl[z^{-n}(1+z)^{\Delta(a)}Y(a,z)b\Bigr]dz = 
% \sum_{j\ge0}\binom{\Delta(a)}{j}a_{(j-n)}b.
%\]
%Let $O(V)$ be the linear subspace of $V$ 
%spanned by all $a \Zop{}{n} b$ for $a,b \in V$ and $n\ge2$.
%Then, 
%\[
% A(V) \ceq V/O(V), \quad \text{with product $a\Zop{}{1}b \bmod O(V)$}
%\]
%is a unital associative algebra. 
%The image of the vacuum $\vac$ is the unit of $A(V)$, 
%and the conformal vector $\omega$ maps to a central element of $A(V)$.

The definition and fundamental properties of $A(V)$ 
will be reviewed in \cref{ss:wtA:A}.
Here we mention only that 
the algebra $A(V)$ is filtered by conformal weight, 
its associated graded object is a Poisson algebra \cite[\S4.4]{Zhu},  
and it admits a surjection from the $C_2$-Poisson algebra of $V$ 
(see, e.g., \cite[IV.3]{GG}).
Zhu's fundamental theorem \cite[Theorem 2.2.2]{Zhu} establishes 
a one-to-one correspondence between: 
\begin{itemize}
\item the isomorphism classes of simple $\bbZ_{\ge0}$-graded $V$-modules,
\item and the isomorphism classes of simple (left) $A(V)$-modules.
\end{itemize}
%The action of $V$ on the degree-zero part $M_0$ of a graded module 
%$M=\bigoplus_{n\ge0}$ factors through $A(V)$, so $A(V) \cong \End_V(M_0)$.
%Below are some additional properties of the Zhu algebra.
%\begin{itemize}
%\item 
%If $V$ is rational (i.e., has semisimple representation theory), 
%then $A(V)$ is a semisimple algebra \cite[Theorem 2.2.3]{Zhu}.
%\item 
%$A(V)$ is finite-dimensional 
%if $V$ is $C_2$-cofinite \cite[Proposition 3.6]%{DLM2}. 
%\cite[Proposition 3.3]{ALY}.
%\item 
%$A(V)$ is filtered by the conformal weight grading, 
%and its associated graded object is a Poisson algebra \cite[\S4.4]{Zhu} 
%and admits a surjection from the  $C_2$-Poisson algebra of $V$ 
%(see, e.g., \cite[IV.3]{GG}).
%\item 
%Zhu's modular invariance theorem \cite[Theorem 5.3.3]{Zhu}: 
%a cornerstone in the theory of modular forms and VOAs.
%\end{itemize}

%After the introduction, 
There have been many developments in the study of the Zhu algebra.
We mention only a few: 
\begin{itemize}
\item 
Determination of the structure of $A(V)$:
There is a vast amount of literature on this topic. 
Among the early works, let us mention 
\cite{FZ} for affine vertex algebras,
\cite{Wa} for Virasoro vertex algebras,
\cite{KWa} for the $N=1$ superconformal vertex algebra 
in the Neveu--Schwarz sector,
and \cite{DLM0} for lattice VOAs.

%\item 
%Zhu bimodules: introduced in \cite{FZ} to describe the space of intertwiners.
%See also \cite{Li} for further developments and additional references.

%\item 
%Higher Zhu algebras: generalizations capturing more graded pieces \cite{DLM}.

\item 
Twisted Zhu algebra $\Zhu_H(V)$:
a generalization to a graded vertex algebra $V$ with Hamiltonian $H$ \cite[\S2]{DK}.

\item 
Connections to the representation theory of finite $W$-algebras, 
especially via BRST cohomology \cite{DK,G24b}.

%\item
%The relation between the chiral homology and 
%the Hochschild cohomology of Zhu algebra \cite{vEH2}.
\end{itemize}

%%%%%%%%%%%%%%%%%%%%%%%%%%%%%%%%%%%%%%%%%%%%%%%%%%%%%%%%%%%%%%%%%%%%%%%%%%%%%%%%%%%%%%%%%%
\subsection{Huang's formulation of Zhu algebra}\label{ss:intro:wtA}

As explained in \cref{ss:intro:A}, 
the original definition of the Zhu algebra $A(V)$ 
requires the vertex algebra $V$ to be a VOA,
that is, it must possess a conformal vector and a $\bbZ_{\ge0}$-grading.
The twisted Zhu algebra $\Zhu_H(V)$ introduced by De Sole and Kac \cite{DK} 
also requires $V$ to have a grading and a Hamiltonian $H$.

In contrast to these definitions, 
the associative algebra $\wtA(V)$ introduced by Yi-Zhi Huang in \cite[\S6]{Hu} 
is defined for an arbitrary vertex algebra $V$.
Note that while the paper \cite{Hu} introduces $\wtA(V)$ 
for a vertex operator algebra $V$, 
its definition can be straightforwardly extended to any vertex algebra $V$.
We will review the precise definition of $\wtA(V)$ in \cref{ss:wtA:wtA}.
%Let us explain the outline of the definition (see \cref{ss:wtA:wtA} for details).
%Let $V$ be a vertex algebra with state-field correspondence 
%$Y(a,z)=\sum_{n\in\bbZ}z^{-n-1}a_{(n)}$ for $a\in V$.
%We define $a\Hop{}{n}b \in V$ for $a,b \in V$ and $n\in\bbZ_{\ge1}$ by
%\[
% a\Hop{}{n}b \ceq \res_{z=0}[e^{z} (e^{z}-1)^{-n} Y(a,z)b]dz.
%\]
%Also, let $\wtO(V)$ be the linear subspace of $V$ spanned by $a\Hop{}{n}b$ for $n\ge2$.
%Then,
%\[
% \wtA(V) \ceq V/\wtO(V), \quad \text{with product $a\Hop{}{1}b \bmod \wtO(V)$},
%\]
%is a unital associative algebra. 
If $V$ is a vertex operator algebra, 
then $\wtA(V)$ is isomorphic to $A(V)$ as an associative algebra.
The definition of $\wtA(V)$ is more conceptual and geometric 
than those of $A(V)$ and $\Zhu_H(V)$, 
and it requires no extra structure on the vertex algebra $V$.

The literature on Huang's Zhu algebra $\wtA(V)$ is still limited.
However, van Ekeren and Heluani \cite{vEH,vEH2} studied $\wtA(V)$ and 
used it to reveal the relationship between 
the chiral homology and the Hochschild cohomology of the Zhu algebra.
In addition, Huaimin Li and Qing Wang \cite{LW} introduced analogues 
of the higher Zhu algebras \cite{DLM} 
and presented several interesting results on the representation theory of VOAs.

The aim of this note is to clarify the advantages of using Huang's version $\wtA(V)$
rather than the original $A(V)$ or $\Zhu_H(V)$, 
and to contribute to the literature by further studying $\wtA(V)$.
Specifically, we will demonstrate:
\begin{clist}
\item \label{i:intro:motif1}
The algebra structure of $\wtA(V)$ can be determined more easily than that of $A(V)$.
\item \label{i:intro:motif2} 
A SUSY analogue of the Zhu algebra can be naturally introduced from a geometric viewpoint.
\end{clist}
To illustrate \ref{i:intro:motif1}, we determine the Zhu algebras of 
the $N=1, 2, 3, 4$ and big $N=4$ superconformal vertex algebras.

%%%%%%%%%%%%%%%%%%%%%%%%%%%%%%%%%%%%%%%%%%%%%%%%%%%%%%%%%%%%%%%%%%%%%%%%%%%%%%%%%%%%%%%%%%
\subsection{Main result 1: Zhu algebras of superconformal vertex algebras}
\label{ss:intro:main}

Superconformal algebras are supersymmetric enlargements of the Virasoro algebra 
and play a fundamental role in the study of two-dimensional 
superconformal field theory. %\cite{A+a, A+b, KL}.
Since the works of \cite{KRW, KW}, it has been known that 
each of these superconformal algebras is isomorphic to an affine $W$-algebra 
(possibly after tensoring with free fields). 
Together with \cite[Theorem 5.10]{DK}, this implies that their Zhu algebras 
provide concrete examples of finite $W$-algebras \cite{P, DK}.

This paper focuses on the following vertex algebras 
associated with superconformal algebras:
\begin{itemize}
\item 
The $N=1$ superconformal vertex algebra $V^{N=1}$, 
also called Neveu--Schwarz vertex algebra \cite[Example 5.9b]{K}, \cite[\S8.2]{KW}.
\item 
The $N=2$ superconformal vertex algebra $V^{N=2}$ \cite[Example 5.9c]{K}, \cite[\S8.3]{KW}.
\item 
The $N=4$ superconformal vertex algebra $V^{N=4}$ \cite[\S8.4]{KW}.
See \cref{ss:N=4} for the precise definition.
\item 
The $N=3$ superconformal vertex algebra $V^{N=3}$ \cite[\S8.5]{KW}.
See \cref{ss:N=3}.
\item 
The big $N=4$ superconformal vertex algebra $V^{N=4,\tbig}$ \cite[\S8.6]{KW}.
See \cref{ss:big4}.
\end{itemize}
%Zhu algebras associated to the superconformal vertex algebras: 
%\begin{gather*}
% V^{N=1}=\clW\bigl(\tfrac{3}{2},2\bigr),\quad 
% V^{N=2}=\clW\bigl(1,(\tfrac{3}{2})^2,2\bigr),\quad 
% V^{N=3}=\clW\bigl(\tfrac{1}{2},1^3,(\tfrac{3}{2})^3,2\bigr),\\ 
% V^{N=4}=\clW\bigl(1^3,(\tfrac{3}{2})^4,2\bigr),\quad
% V^{N=4,\tbig}=\clW\bigl((\tfrac{1}{2})^4,1^7,(\tfrac{3}{2})^4,2\bigr).
%\end{gather*}
%Here the notation $V=\clW(\Delta_1^{n_1},\ldots,\Delta_k^{n_k})$ indicates that 
%the vertex algebra $V$ admits a strong generating set consisting of 
%$n_i$ primary elements of conformal dimension $\Delta_i$ for $i=1,\ldots,k$
%(except for the conformal vector),
%as is common in the literature (see, for example, \cite[\S5.2]{BS}).

The vertex algebras $V^{N=1,2,4}$ are referred to 
as minimal $W$-algebras in the literature;
see \cite{KMPa,KMP-II,KMPb} for recent related studies.
The main results are summarized as follows
(precise statements are given in \cref{thm:N=4,cor:N=2,cor:N=1,thm:N=3,thm:big4}).

\begin{thm}\label{thm:main}
Let $(V,\frg)$ be one of the following:
\begin{align*}
&\text{Case I}\colon\quad
 (V^{N=1},\osp(1|2)),\quad
 (V^{N=2},\fsl(1|2)),\quad
 (V^{N=4},\psl(2|2));\\
&\text{Case II}\colon\quad
 (V^{N=3},\osp(3|2)),\quad
 (V^{N=4,\tbig},D(2,1;a)) \quad \text{where $a\in\bbC\setminus\{0,-1\}$},
\end{align*}
and let $\frg^f$ be the centralizer of 
a minimal nilpotent element $f=f_{\tmin}$ in $\frg$.
\begin{enumerate}
\item 
In Case I, the Zhu algebra $\wtA(V)$ is isomorphic to 
the universal enveloping algebra $U(\frg^f)$.
%\begin{gather*}
% \wtA(V^{N=1}) \cong U(\osp(1|2)^{f}), \quad 
% \wtA(V^{N=2}) \cong U(\fsl(1|2)^f), \quad 
% \wtA(V^{N=4}) \cong U(\psl(2|2)^f).
%\end{gather*}

\item 
In Case II, $\wtA(V)$ contains a subalgebra which is isomorphic to $U(\frg^f)$,
and as a vector space, we have %as a left $U(\frg^f)$-module we have
\begin{align}\label{eq:N=34:mod_str}
 \wt{A}(V^{N=3}) \cong U(\osp(3|2)^f) \otimes C(\bbC), \quad 
 \wt{A}(V^{N=4,\tbig}) \cong U(D(2,1;a)^f) \otimes \bbC[\xi] \otimes C(\bbC^4),
\end{align}
where $C(\bbC),C(\bbC^4)$ are the Clifford algebra with one and four generators,
respectively, and $\bbC[\xi]$ is the polynomial algebra of one even generator $\xi$.
(see \cref{ss:N=3,ss:big4} for details).
\end{enumerate}
\end{thm}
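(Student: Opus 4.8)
The plan is to prove the five isomorphisms (the precise statements \cref{thm:N=1,thm:N=2,thm:N=4,thm:N=3,thm:big4}) by a direct computation inside Huang's algebra $\wtA(V)$. Although each $V$ here is a genuine VOA, so that $\wtA(V)\cong A(V)$, the whole point of advantage \ref{i:intro:motif1} is that the star products $\Hop{}{n}$ are far more convenient to compute with than the $\Zop{}{n}$: the subspace $\wtO(V)$ is generated using the ``exponential'' weights $e^z(e^z-1)^{-n}$, so that total derivatives $Ta$ and normally ordered products reduce cleanly without ever invoking conformal weights. Concretely, for each $V$ I would fix the strong generating set prescribed by its $\clW(\Delta_1^{n_1},\dots,\Delta_k^{n_k})$ presentation --- the weight-$1$ currents, the odd weight-$\tfrac32$ supercurrents, the conformal vector $\omega$ of weight $2$, and, in Case I\!I, the weight-$\tfrac12$ fields together with (for $V^{N=4,\tbig}$) the extra weight-$1$ boson --- and record the complete set of $\lambda$-brackets, i.e.\ the defining operator product expansions of the corresponding superconformal algebra. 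As a consistency check one can keep in mind the route through \cite[Theorem 5.10]{DK}: realizing $V$ as $W^k(\frg,f_\tmin)$ tensored with free fields identifies $\wtA(V)$ with a minimal finite $W$-algebra, but the self-contained computation below is what exhibits advantage \ref{i:intro:motif1}.

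Next I would set up the structural reduction, which rests on two facts from the general theory of $\wtA$. First, derivatives are redundant: the image of a total derivative reduces via $[\,Ta\,]\equiv-\Delta(a)[a]$ modulo $\wtO(V)$, so that $\wtA(V)$ is spanned by ordered monomials in the images of the strong generators themselves. Second, for homogeneous $a,b$ the supercommutator $a\Hop{}{1}b-(-1)^{|a||b|}b\Hop{}{1}a$ reduces, modulo $\wtO(V)$, to a finite combination of the products $a_{(j)}b$ with $j\ge0$ whose coefficients are binomials in $\Delta(a)$; that is, to the singular part of the OPE. Together these realize $\wtA(V)$ as a filtered associative algebra whose associated graded (with respect to the conformal-weight filtration) is the Zhu $C_2$-Poisson algebra, and for these freely strongly generated $V$ the latter is just the super-symmetric algebra on the generators. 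Thus the determination of all products is reduced to the finitely many OPE coefficients recorded above.

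I would then match the outcome with $\frg^f$. Under the minimal grading $\frg=\frg_{-1}\oplus\frg_{-1/2}\oplus\frg_0\oplus\frg_{1/2}\oplus\frg_1$ attached to the $\fsl_2$-triple of $f=f_\tmin$, one has $\frg^f=\frg_0^\natural\oplus\frg_{-1/2}\oplus\bbC f$, and the generator attached to $v\in\frg^f_j$ has conformal weight $1-j$; hence $\frg_0^\natural$, $\frg_{-1/2}$ and $\bbC f$ correspond exactly to the weight-$1$, weight-$\tfrac32$ and weight-$2$ generators, with the centrality of $f$ in $\frg^f$ matching that of $[\omega]$. Computing the reduced supercommutators of the current and supercurrent images from the previous step should reproduce the Lie superalgebra relations of $\frg^f$, giving a surjection $U(\frg^f)\srj\wtA(V)$; since its associated graded is $S(\frg^f)\srj S(\frg^f)$, a PBW comparison upgrades it to an isomorphism, yielding Case I.

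The decisive step --- and the main obstacle --- is to verify that the reduced anticommutators of the supercurrent images close \emph{linearly}, with no leftover quadratic $S^2(\frg_0^\natural)$-term and no spurious current. Indeed the singular OPE of two weight-$\tfrac32$ fields contributes the weight-$2$ field ($\leftrightarrow f$), a weight-$1$ current, a derivative of that current, and a scalar; the point is that upon reducing the derivative against $[\,Ta\,]\equiv-\Delta(a)[a]$ the two current contributions cancel, leaving only the central weight-$2$ term. This cancellation reflects the linearity of the $N=1,2,4$ superconformal algebras --- equivalently, the absence of a genuine finite-$W$-algebra deformation for $\osp(1|2),\fsl(1|2),\psl(2|2)$ --- and is what pins $\wtA(V)$ to $U(\frg^f)$ on the nose. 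Case I\!I differs precisely through the weight-$\tfrac12$ fields: their self-OPEs generate a Clifford algebra $C(\bbC)$, resp.\ $C(\bbC^4)$, and the additional boson of $V^{N=4,\tbig}$ a polynomial algebra $\bbC[\xi]$ (the Zhu algebra of a free boson), while their nontrivial cross-brackets with the currents prevent $\wtA(V)$ from splitting as an algebra. I would therefore identify the copy of $U(\frg^f)$ as the proper subalgebra generated by the weight-$1,\tfrac32,2$ images and establish the left $U(\frg^f)$-module decomposition \eqref{eq:N=34:mod_str} by exhibiting the free-field monomials as a module basis, with the detailed bookkeeping carried out in \cref{ss:N=3,ss:big4}.
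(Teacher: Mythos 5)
Your overall architecture (read relations off the OPEs, build a surjection from $U(\frg^f)$, upgrade it to an isomorphism by an associated-graded/PBW comparison, and in Case I\!I treat the weight-$\tfrac12$ fields and the extra boson as Clifford/polynomial factors of a free $U(\frg^f)$-module) is the same as the paper's. But there are two genuine problems. First, you import the reduction rules of the \emph{original} Zhu algebra into $\wtA(V)$: the identities $[Ta]\equiv-\Delta(a)[a]$ and ``supercommutator $=$ binomially weighted singular part of the OPE'' are the $\Zhu_H(V)$-formulas \eqref{eq:A:T+H-com}, whereas in Huang's algebra the correct rules are exactly $[\pd a]=0$ and $[[a],[b]]=[a_{(0)}b]$ (\cref{cor:wtA:pd=0}). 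Consequently your ``decisive step'' is a computation in the wrong algebra: in $\wtA(V)$ no cancellation between current contributions is needed at all, since $[\pd J]=0$ kills the derivative term and the $\lambda,\lambda^2$ terms of the $\lambda$-bracket never enter; and if you actually run your formulas in $A(V)$, the cancellation leaves a scalar you have dropped --- e.g.\ from $[{G^+}_\lambda G^-]=L+\frac12(\pd+2\lambda)J+\frac{c}{6}\lambda^2$ one gets $[G^+]*[G^-]+[G^-]*[G^+]=[L]+\tfrac12[TJ]+\tfrac12[J]-\tfrac{c}{24}=[L]-\tfrac{c}{24}$, not $[L]$, so with your calculus the assignment $f\mapsto[L]$ would \emph{not} satisfy the $\frg^f$-relations on the nose and would require shifted generators. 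Mixing the two calculi makes every relation you compute unreliable, and it obscures the very advantage of $\wtA(V)$ that the theorem is meant to demonstrate.

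Second, the injectivity step is asserted exactly where the work lies. That the PBW/weight filtration on $\wtA(V)$ has associated graded equal to the free supercommutative algebra $R(V)\cong S(\frg^f)$ --- equivalently, that $\gr\wtO$ is no larger than the obvious span --- is precisely what must be proved: in general one only has a surjection $R(V)\srj\gr\wtA(V)$, which can fail to be injective. The paper closes this by defining $\psi\colon\gr V\to\gr U$ on generators via $\pd^{n}X_i\mapsto\delta_{n0}x_i$ and proving $\gr_k\wtO\subset W_k\subset\Ker\psi$ (\cref{lem:psi:wtO-Ker}), which rests on the filtration estimates of \cref{lem:psi:prep} and crucially on the linearity hypothesis $(X_i)_{(m)}X_j\in F_1V$ for all $m\ge0$ --- the same linearity you invoke, but only for the closure of the commutators, not for controlling $\wtO$. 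Your ``PBW comparison upgrades it to an isomorphism'' hides exactly this lemma, and without it the argument does not close. Relatedly, in Case I\!I the paper makes the comparison work by enlarging $\frg^f$ to the central extensions $\frR^{N=3}_c$ and $\frR^{N=4}_{c,a}$ of \cref{lem:R3,lem:R4}, so that the full generating set including $\Phi$, $\sigma^{\pm\pm}$, $\xi$ has linear brackets and a genuine bijection $U_1(\frR)\sto\wtA(V)$ is available before restricting to the subalgebra $U(\frg^f)$; your proposal delegates this to ``bookkeeping'', but it is where the module decomposition \eqref{eq:N=34:mod_str} actually comes from.
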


For the precise descriptions of the algebraic structure of $\wtA(V)$ in Case II, 
see \cref{thm:N=3,thm:big4}.
While Case I appears to be known to experts. 
we were unable to find any literature explicitly determining 
the structure of the Zhu algebras for Case II.

As mentioned in \ref{i:intro:motif1}, 
Huang's version $\wtA(V)$ has an advantage over the original $A(V)$, in that 
\begin{quote}
one can immediately read off the relations of $\wtA(V)$ from 
the $\lambda$-brackets (OPEs) of the given vertex algebra $V$.
\end{quote}
This is demonstrated in \cref{cor:wtA:pd=0}, \eqref{eq:wtA:a0b}.

We prove \cref{thm:main} uniformly in \cref{ss:N=124:bij}.
Let us outline the strategy of the proof.
In Case I, using \cref{cor:wtA:pd=0}, 
we construct a surjection $\varphi\colon U(\frg^f) \srj \wtA(V)$.
Thus, it remains to construct the inverse $\ol{\psi}$ 
of $\varphi$ (\cref{prp:Ngen:varphi}).
For this purpose, we use the linearity of the $\lambda$-brackets 
of $V^{N=1,2,4}$ to establish a well-defined correspondence between 
the PBW generators of $U(\frg^f)$ and their images in $\wtA(V)$.
This is carried out in \cref{lem:psi:wtO-Ker},
following the preparatory results in \cref{lem:psi:prep}.

Case II is treated in a similar manner, but the method cannot be applied directly
due to the nonlinearity of the $\lambda$-brackets.
To circumvent this problem, 
we use the ``linear realization'' of the finite $W$-algebras associated with 
the superconformal vertex algebras $V^{N=3}$ and $V^{N=4,\tbig}$, 
which dates back to \cite{GS}.
We follow the descriptions in \cite[\S8.5, \S8.6]{KW} 
to implement the aforementioned strategy..

%Using the same method, one can also prove $\wt{A}(V^k(\frg)) \cong U(\frg)$ for 
%the universal affine vertex algebra $V^k(\frg)$ 
%of a semisimple Lie algebra $\frg$ with an arbitrary level $k$.
%See also \cref{rmk:1:AVk=U} for further discussion on this topic.

%%%%%%%%%%%%%%%%%%%%%%%%%%%%%%%%%%%%%%%%%%%%%%%%%%%%%%%%%%%%%%%%%%%%%%%%%%%%%%%%%%%%%%%%%%
\subsection{Relation to finite \texorpdfstring{$W$}{W}-algebras}\label{ss:intro:finW}

Let us explain the relationship between the Zhu algebras of superconformal 
vertex algebras discussed in \cref{thm:main} and finite $W$-algebras.
We divide the argument into Cases I and I\!I in \cref{thm:main}.
\begin{itemize}
\item 
In Case I, the superconformal vertex algebras $V$ are 
the affine $W$-algebras $W^k(\frg,f)$ associated with minimal nilpotent elements 
$f=f_{\tmin}$ (see \cite[\S8.2, \S8.3]{KW} for the details).
%\begin{align*}
% V^{N=1}_c \cong W^k(\osp(1|2),f), \quad & c=\frac{3}{2}-\frac{12(k+1)^2}{2k+3},  \\
% V^{N=2}_c \cong W^k(\fsl(1|2),f), \quad & c=-3(2k+1), \\
% V^{N=4}_c \cong W^k(\psl(2|2),f), \quad & c=-6(k+1),
%\end{align*}
%where $V^N_c$ denotes the superconformal algebra $V^N$ with central charge $c$.
%In the cases $N=1$ and $N=2$ of Case I, by \cref{cor:N=1,cor:N=2}, 
In these cases, 
the Zhu algebras $\wtA(V) \cong U(\frg^f)$ coincide with the finite $W$-algebras $U(\frg,f)$
(see \cite{P,DK} and also \cite[Theorem 3.7]{Zha} 
 for the definition of $U(\frg,f)$ in the super case):
\begin{gather*}
 \wtA(V^{N=1}_c) \cong U(\osp(1|2)^{f}) \cong U(\osp(1|2),f), \\
 \wtA(V^{N=2}_c) \cong U(\fsl(1|2)^f)   \cong U(\fsl(1|2),f), \\
 \wtA(V^{N=4}_c) \cong U(\psl(2|2)^f)   \cong U(\psl(2|2),f).
\end{gather*}
For the isomorphisms $U(\frg^f)\cong U(\frg,f)$, 
see \cite[Lemma 3.4]{PS16} and \cite[Introduction]{G24a} 
%for the isomorphism $U(\osp(1|2)^{f}) \cong U(\osp(1|2),f)$ 
for the $N=1$ case, 
and see \cite[Theorem 2]{PS13} 
%for $U(\fsl(1|2)^f) \cong U(\fsl(1|2),f)$ 
for the $N=2$ case.
The isomorphism in the $N=4$ case is obtained 
by combining the following results: 
$U(\psl(2|2)^f) \cong\wtA(V^{N=4}_c)$ from \cref{thm:main}, 
$\wtA(V^{N=4}_c)\cong A(V^{N=4}_c)$ from \cref{fct:Zhu:wtA=Zhu}, 
and $A(V^{N=4}_c)\cong U(\psl(2|2),f)$ from \cite[Theorem 8.4]{G24b}.
To the best of our knowledge, a direct proof of 
$U(\psl(2|2)^f) \cong U(\psl(2|2),f)$
has not yet appeared in the literature.
%\cite[Theorem 3.7]{ZS15} and \cite[Lemma 3.4]{PS16}

%These provide examples of the statement that 
%``reduction and taking Zhu algebra commutes'',
%which is proved in generality by \cite[Theorem 5.10]{DK}, 
%and also by \cite[Theorem 8.4]{G24b} including the twisted case.
These results exemplify the fact that 
``reduction and taking the Zhu algebra commute,'' 
as proven in general in \cite[Theorem~5.10]{DK} 
and \cite[Theorem~8.4]{G24b} (including the twisted case).

\item
Case II presents a slightly different situation.
According to \cite[\S8.5, \S8.6]{KW} (see also \cite[(7.8)]{KMPb}), 
for minimal nilpotent $f=f_{\tmin}$, we have
\begin{align*}
&V^{N=3}_c \cong W^k(\osp(3|2),f) \otimes F^k(\bbC), \quad c=-6k-3,\\ 
&V^{N=4,\tbig}_{c,a} \cong W^k(D(2,1;a),f) \otimes B^k(\bbC\xi) \otimes F^k(\bbC^4),
 \quad c=-6k,
\end{align*}
where $F^k(\bbC)$ is the fermionic vertex algebra with an odd generator $\Phi$
whose $\lambda$-bracket is $[\Phi_\lambda\Phi]=-(k+\hf)$, 
$F^k(\bbC^4)$ is the fermionic vertex algebra 
with four odd generators $\sigma^{\pm\pm}$
whose non-zero $\lambda$-brackets are 
$[{\sigma^{--}}_\lambda\sigma^{++}]=[{\sigma^{+-}}_\lambda\sigma^{-+}]=k$, 
and $B^k(\bbC\xi)$ is the bosonic vertex algebra with an even generator $\xi$ 
whose $\lambda$-bracket is $[\xi_\lambda\xi]=k\lambda$.

Based on the relationship between $W^k(\frg,f)$ and $U(\frg,f)$ 
(see \cite[Theorem 5.10]{DK}, \cite[Theorem 8.4]{G24b}), 
the module isomorphisms in \eqref{eq:N=34:mod_str} should be compared 
with the following algebra isomorphisms:
\begin{align*}
 \wt{A}(V^{N=3}_c) \cong U(\osp(3|2),f) \otimes C(\bbC), \quad 
 \wt{A}(V^{N=4,\tbig}_{c,a}) \cong U(D(2,1;a),f) \otimes \bbC[\xi] \otimes C(\bbC^4).
\end{align*}
%where $C(\bbC),C(\bbC^4)$ are Clifford algebras associated to 
%$\bbC$ and $\bbC^4$, respectively.
%However, we have
%\begin{align*}
% U(\osp(3|2),f_\tmin) \not\cong U(\osp(3|2)^{f_\tmin}), \quad
% U(D(2,1;a), f_\tmin) \not\cong U(D(2,1;a )^{f_\tmin}).
%\end{align*}
\end{itemize}

%%%%%%%%%%%%%%%%%%%%%%%%%%%%%%%%%%%%%%%%%%%%%%%%%%%%%%%%%%%%%%%%%%%%%%%%%%%%%%%%%%%%%%%%%%
\subsection{Relation to the twisted Zhu algebras in Ramond sector}

Let us explain the significance of the Zhu algebras appearing in \cref{thm:main} 
from the viewpoint of representation theory of superconformal vertex algebras.
As mentioned in \cref{ss:intro:wtA}, the Zhu algebras in \cref{thm:main} are 
isomorphic to the twisted Zhu algebras of 
the corresponding superconformal vertex algebras; 
see Fact \ref{fct:Zhu:wtA=Zhu} for details.
Through this identification, together with \cite[Theorem 2.30]{DK}, 
one can classify simple $\bbZ_{\ge0}$-graded modules in the Ramond sector 
by means of the corresponding Zhu algebra.
Here, modules in the Ramond sector refer to $g$-twisted modules 
for $g=e^{2\pi\sqrt{-1}L_0}$ in the sense of \cite{DLMb}.
See \cite{KMP-II} for a recent study on the representation theory 
of minimal $W$-algebras in the Ramond sector.

%%%%%%%%%%%%%%%%%%%%%%%%%%%%%%%%%%%%%%%%%%%%%%%%%%%%%%%%%%%%%%%%%%%%%%%%%%%%%%%%%%%%%%%%%%
\subsection{Main result 2: SUSY Zhu algebras}\label{ss:intro:main2}

Another main result of this note is the introduction 
of Zhu algebras for SUSY vertex algebras.
Recall that the notion of SUSY vertex algebras is a superfield formulation 
of vertex algebras equipped with supersymmetry, 
as introduced by Heluani and Kac \cite{HK}. 
In this note, we focus on $N_K=N$ SUSY vertex algebras, 
and give a brief recollection in \cref{ss:SUSY:NK=N}.
The superconformal vertex algebras considered in \cref{thm:main} 
are examples of SUSY vertex algebras (see \cref{eg:SUSY:N=123}).

Taking into account the geometric property of Huang's version of the Zhu algebra,
we will introduce the Zhu algebra $\wtA(\bbV)$ of 
an $N_K=N$ SUSY vertex algebra $\bbV$ in \cref{ss:SUSY:Zhu}, \cref{dfn:SUSY:Zhu}.
It is shown in \cref{prp:SUSY:Zhu} that 
our $\wtA(\bbV)$ is isomorphic to $\wtA(V_{\tred})$ as a superalgebra,
where $V_{\tred}$ is the reduced part of $\bbV$, i.e., 
the underlying vertex algebra structure of $\bbV$ (see \cref{lem:NK:red}). 
Consequently, $\wtA(\bbV)$ enjoys similar properties to 
the Zhu algebra $\wtA(V)$ for a non-SUSY vertex algebra $V$.

We will show in \cref{prp:SUSY:Zhu_sd} that the SUSY structure of $\bbV$ 
is encoded by the odd differentials on $\wtA(\bbV)$
induced by the odd translation operators $\sd^1,\dotsc,\sd^N$ on $\bbV$.
Moreover, we will see in \cref{cor:SUSY:limit} that, 
in the limit where the Zhu algebra becomes the $C_2$-Poisson algebra $R(\bbV)$ 
introduced by the second author in \cite{Y}, 
these odd differentials coincide with the odd differentials $[\sd^k]$
defined in \cite[\S4.1.1]{Y}.
Therefore, our SUSY Zhu algebra $\wtA(\bbV)$ provides a natural SUSY extension 
of the Zhu algebra $\wtA(V)$ of a non-SUSY vertex algebra.

%%%%%%%%%%%%%%%%%%%%%%%%%%%%%%%%%%%%%%%%%%%%%%%%%%%%%%%%%%%%%%%%%%%%%%%%%%%%%%%%%%%%%%%%%%
\subsection{Relation to finite SUSY \texorpdfstring{$W$}{W}-algebras}

Affine SUSY $W$-algebras associated with basic classical Lie superalgebras 
and odd nilpotent elements were introduced by Molev, Ragoucy and Suh in \cite{MRS}. 
These form a large family of $N_K=1$ SUSY vertex algebras \cite[Corollary 4.5]{MRS}.
In the principal case, Genra, Song and Suh proved in \cite{GSS} 
that the Zhu algebra of an affine SUSY $W$-algebra is isomorphic to 
the corresponding finite SUSY $W$-algebra introduced in \cite{CCS}.
Combined with \cref{prp:SUSY:Zhu}, 
this result implies that the finite SUSY $W$-algebra associated with 
a principal odd nilpotent element carries an odd differential 
induced by the odd translation operator on the affine SUSY $W$-algebra.
It would be interesting to study this odd differential in specific cases.

%%%%%%%%%%%%%%%%%%%%%%%%%%%%%%%%%%%%%%%%%%%%%%%%%%%%%%%%%%%%%%%%%%%%%%%%%%%%%%%%%%%%%%%%%%
\subsection{Future directions}

In this note, we have determined the Zhu algebras of universal 
superconformal vertex algebras $V$ using Huang's formulation.
For the study of representation theory of superconformal algebras, 
one is particularly interested in the Zhu algebra of the simple quotient of $V$.
We expect that Huang's formulation will be useful
for computing the Zhu algebra of the simple quotient.

As mentioned in \cref{ss:intro:main}, the linearity of the $\lambda$-brackets is 
a technical restriction in applying our strategy to 
determine the Zhu algebra $\wtA(V)$. 
For the original Zhu algebra $A(V)$, De Sole and Kac \cite{DK} introduced 
the notions of non-linear Lie algebras and non-linear Lie conformal algebras,
and showed that the original Zhu algebra of the affine $W$-algebras
(which may have non-linear $\lambda$-brackets)
are isomorphic to the corresponding finite $W$-algebras.
See also the recent work of Genra \cite{G24b}.
We expect that a similar theory can be developed for Huang's version $\wtA(V)$.

Furthermore, for the big $N=4$ superconformal algebra,
the ``linear realization'' appears in the recent paper \cite[\S3.4, (3.46)]{W},
which studies the sigma model whose target is 
the moduli space of instantons on $S^3 \times S^1$.
It may be valuable to investigate the relationship between our \cref{thm:big4} 
on the Zhu algebra $\wtA(V^{N=4,\tbig})$ with such studies, 
taking into account the $N_K=4$ SUSY vertex algebra structure.

There are many topics to investigate concerning our SUSY Zhu algebras.
One of these is the SUSY analogue of the theory of chiral homology 
and its relation to the Hochschild cohomology studied, as studied in \cite{vEH2}.
We also expect that the work \cite{HV} 
on the character of topological $N=2$ vertex algebras
can be re-examined from the perspective of our SUSY Zhu algebras.
Other potential direction include the representation theory of 
SUSY vertex algebras, SUSY analogues of Zhu bimodules, and the role of 
the differential within the SUSY Zhu algebra in representation theory.

%%%%%%%%%%%%%%%%%%%%%%%%%%%%%%%%%%%%%%%%%%%%%%%%%%%%%%%%%%%%%%%%%%%%%%%%%%%%%%%%%%%%%%%%%%
\subsection{Organization}

In \cref{s:wtA}, we give a review of Huang's formulation of Zhu algebra.
\cref{ss:wtA:VA} gives the notation and terminology 
of vertex algebras used throughout this note.
\cref{ss:wtA:wtA} explains Huang's version $\wtA_\gamma(V)$
of Zhu algebra for a vertex algebra $V$, 
basically following the original article \cite[\S6]{Hu}
but with some refinements.
In \cref{ss:wtA:A}, we explain the relationship between 
Huang's version $\wt{A}_\gamma(V)$ 
and the original Zhu algebra $A(V)$.

The next \cref{s:N=124} presents the main result of this note.
Subsections \cref{ss:N=4}--\cref{ss:big4} explain the details of 
the superconformal vertex algebras and the corresponding Zhu algebras,
and state the main \cref{thm:N=4,thm:N=3,thm:big4}.
These are proved in a uniform way in \cref{ss:N=124:bij}.

In the final \cref{s:SUSY}, 
we consider $N_K=N$ SUSY vertex algebras and propose their Zhu algebra.
In \cref{ss:SUSY:NK=N}, we recall the framework of SUSY vertex algebras from \cite{HK}.
In \cref{ss:SUSY:Zhu}, we introduce the Zhu algebra for $N_K=N$ SUSY vertex algebras.
We explain the motivation and observation for the $N_K=1$ case in \cref{sss:SUSY:Zhu:NK=1},
and introduce \cref{dfn:SUSY:Zhu} in the general $N_K=N$ case in \cref{sss:SUSY:Zhu:NK=N}.

In \cref{ss:zero-mode}, 
we explain the relationship between the Zhu algebras and 
the centralizer of the zero-mode $L_0$ of the Virasoro element (in the Ramond sector).
It will be shown that the Zhu algebra can be regarded as the ``zero-mode algebra''
of the superconformal vertex algebra.

%%%%%%%%%%%%%%%%%%%%%%%%%%%%%%%%%%%%%%%%%%%%%%%%%%%%%%%%%%%%%%%%%%%%%%%%%%%%%%%%%%%%%%%%%%
\subsection{Notations and terminology}
\begin{itemize}
\item 
$\bbN \ceq \{0,1,2,\dotsc\}$ denotes the set of non-negative integers.
\item 
$\bbZ$ denotes the ring of integers.
\item 
Parity is denoted by $\bbZ \to \Zt=\{\ev,\od\}$, $n \mto \ol{n}$.
\item 
We abbreviate the partial differential operator as $\pd_z \ceq \frac{\pd}{\pd z}$.
\item 
A linear (super)space is defined over the complex number field $\bbC$ unless otherwise stated.
%We also denote $\otimes \ceq \otimes_{\bbC}$.
%We also use the abbreviation $\Span S\ceq\Span_{\bbC}S$ for the linear span over $\bbC$.
\item 
As explained in \cref{ss:wtA:VA}, a vertex algebra means a vertex superalgebra.
\item 
The associative algebra $A(V)$ for a vertex operator algebra $V$ introduced in \cite{Zhu}
will be called \emph{the original Zhu algebra of the VOA $V$}, 
and the associative algebra $\wtA(V)$ for a vertex algebra $V$ introduced in \cite{H}
will be called (\emph{Huang's version of}) \emph{the Zhu algebra of the vertex algebra $V$}.
\end{itemize}

%%%%%%%%%%%%%%%%%%%%%%%%%%%%%%%%%%%%%%%%%%%%%%%%%%%%%%%%%%%%%%%%%%%%%%%%%%%%%%%%%%%%%%%%%%
%%%%%%%%%%%%%%%%%%%%%%%%%%%%%%%%%%%%%%%%%%%%%%%%%%%%%%%%%%%%%%%%%%%%%%%%%%%%%%%%%%%%%%%%%%
\section{Huang's version of Zhu algebra}\label{s:wtA}

The aim of this section is to recall from \cite[\S6]{Hu} 
Huang's associative algebra $\wtA(V)$ of a vertex algebra $V$.
When $V$ is a vertex operator algebra (VOA), then $\wtA(V)$ is isomorphic to 
Zhu's associative algebra $A(V)$ \cite[\S2]{Zhu}, 
as will be explained in \cref{ss:wtA:A}.

%%%%%%%%%%%%%%%%%%%%%%%%%%%%%%%%%%%%%%%%%%%%%%%%%%%%%%%%%%%%%%%%%%%%%%%%%%%%%%%%%%%%%%%%%%
\subsection{Vertex algebras}\label{ss:wtA:VA}

We introduce the notation for vertex algebras used throughout this paper.

\subsubsection{}\label{sss:VA:VA}

We begin with some super notation. Let $V$ be a linear superspace. 
\begin{itemize}
\item 
We denote the parity decomposition of $V$ by $V=V^{\ev} \oplus V^{\od}$. 
An element of $V^p$, $p \in \Zt$, is called of pure parity.
For such $v \in V^p$, we denote $\ol{v} \ceq p$.

\item 
For a linear superspace $V=V^{\ev} \oplus V^{\od}$ with $m=\dim V^{\ev}$ and $n=\dim V^{\od}$,
we denote $\dim V \ceq m|n$.

\item 
We denote by $\End V$ the endomorphism superalgebra of $V$.
Hence, we have $\End V = (\End V)^{\ev} \oplus (\End V)^{\od}$, and 
$(\End V)^{\ev}$ (resp.\ $(\End V)^{\od}$) consists of even (resp.\ odd) linear transformations.

\item 
The super-commutator in $\End V$ is denoted by $[\cdot,\cdot]$.
Explicitly, for $x,y \in \End V$ of pure parity,  
\begin{align}\label{eq:VA:com-form}
 [x,y] \ceq xy-p(x,y)yx, \quad p(x,y) \ceq (-1)^{\ol{x} \ol{y}}.
\end{align}

\item 
We denote by $V\dbr{z} \ceq \{\sum_{j \in \bbN} z^{j} v_j \mid v_j \in V \}$ 
the space of formal power series in the variable $z$ with coefficients in $V$.
It is a linear superspace, where the parity of $z^jv_j$ is set to be $\ol{v_j}$.

\item 
We denote 
$V\dbr{z^{\pm1}} \ceq \{\sum_{j \in \bbZ} z^jv_j \mid v_j \in V \}$, and
$V\dpr{z} \ceq \{\sum_{j \in \bbZ} z^jv_j \in V\dbr{z^{\pm1}} \mid v_j=0 \text{ for } j \ll 0 \}$.
These are also linear superspaces, with parity defined in the same way as for $V\dbr{z}$.
We will also use the linear superspaces $V[z]$ and $V[z^{\pm1}]$, consisting of 
polynomials and Laurent polynomials with coefficients in $V$, respectively.

\item 
The residue map is the even linear map 
\begin{align}\label{eq:VA:res}
 \res_z[-]dz\colon V\dbr{z^{\pm1}} \lto V, \quad 
 \res_z\bigl[\tsum_{j \in \bbZ} z^j v_j\bigr]dz \ceq v_{-1}. 
\end{align}
\end{itemize}
Below, we suppress the prefix ``super'' if no confusion may occur.
For example, a linear superspace and the super-commutator are referred to simply as 
a linear space and the commutator, respectively.

\subsubsection{}

We use the word ``vertex algebra'' to mean a ``vertex superalgebra'' 
in the sense of \cite[\S1.3]{K} and \cite[1.3.2]{FB}.
Explicitly, a vertex algebra is a data $(V,Y,\vac,\pd)$ consisting of 
\begin{itemize} %\begin{dfn}\label{dfn:VA:VA}
\item 
a linear superspace $V=V^{\ev} \oplus V^{\od}$, called the \emph{state space}, 
\item
an even linear map $V \to (\End V)\dbr{z^{\pm1}}$,
$a \mto Y(a,z) = \sum_{n\in\bbZ}z^{-n-1}a_{(n)}$ with $a_{(n)} \in \End V$,
where $z$ is a formal variable, 
\item
a non-zero even element $\vac \in V^{\ev}$, called the \emph{vacuum}, 
\item
an even operator $\pd \in (\End V)^{\ev}$, 
called the (even) \emph{translation} (\emph{operator}).
\end{itemize}
satisfying the following conditions for any $a,b \in V$. 
\begin{clist}
\item  
(field condition) 
$Y(a,z)b \in V\dpr{z}$, i.e., there exists $N(a,b)\in\bbZ$ such that 
\begin{align}\label{eq:VA:f}
 a_{(n)}b = 0 \quad \text{for any $n>N(a,b)$}.
\end{align}
The map $Y$ satisfying this condition is called the \emph{state-field correspondence},
and the series $Y(a,z)$ is called the \emph{field} (or \emph{vertex operator}) of $a \in V$.

\item 
(locality axiom)
The fields $Y(a,z)$ and $Y(b,w)$ are local, i.e., there exists $M(a,b) \in \bbN$ such that 
\begin{align}\label{eq:VA:loc}
 (z-w)^{M(a,b)}[Y(a,z),Y(b,w)]=0, 
\end{align}
where $[,]$ denotes the commutator in $(\End V)\dbr{z^{\pm1},w^{\pm1}}$
(c.f.\ \eqref{eq:VA:commutator}).
%Equivalently, for any $c \in V$, the three expressions
%\begin{align*}
% a(z_1)b(z_2)c                     &\in V\dpr{z_1}\dpr{z_2}, \\
% p(a,b)b(z_2)a(z_1)c &\in V\dpr{z_1}\dpr{z_2}, \\
% (a(z_1-z_2)b)(z_2)c               &\in V\dpr{z_2}\dpr{z_1-z_2}
%\end{align*}
%are the expansions of the same element of 
%$V\dbr{z_1,z_2}[z_1^{-1},z_2^{-1},(z_1-z_2)^{-1}]$ in the domains 
%$\abs{z_1}>\abs{z_2}$, $\abs{z_2}>\abs{z_1}$ and $\abs{z_2}>\abs{z_2-z_1}$, respectively.

\item
(vacuum axiom)
Denoting by $O(z)$ some power series in $z$, we have
\begin{align}\label{eq:VA:vac}
 Y(\vac,z)=\id_V, \quad Y(a,z)\vac = a + O(z) \in V\dbr{z}. 
\end{align}

\item 
(translation axiom)
We have
\begin{align}\label{eq:VA:pd}
  \pd\vac=0, \quad [\pd,Y(a,z)] = \pd_z Y(a,z),
\end{align}
where $[\cdot,\cdot]$ is the commutator in $\End V$ 
(and its linear extension over $\bbC\dbr{z^{\pm1}}$).
\end{clist}
For simplicity, we sometimes denote the vertex algebra $(V,Y,\vac,\pd)$ by $V$.
We also use the standard abbreviation $a(z) \ceq Y(a,z)$ for $a \in V$.

Let us recall some consequences of the axiom. 
Let $V$ be a vertex algebra, and take any $a,b \in V$.
\begin{itemize}
\item
The translation $\pd$ can be recovered as $\pd a = a_{(-2)}\vac$. 
We also have %\begin{align}\label{eq:VA:(Ta)(z)}
$(\pd a)(z) = \pd_z a(z)$, which implies
\begin{align}\label{eq:VA:pd-com}
 (\pd a)_{(n)} = -n a_{(n-1)} \quad (n \in \bbZ).
\end{align}

\item 
The locality condition implies the commutator formula:
For any $m,n\in\bbZ$, we have \cite[(3.3.12)]{FB}:
\begin{align}\label{eq:VA:commutator}
 [a_{(m)},b_{(n)}] = \sum_{k \ge 0} \binom{m}{k}(a_{(k)}b)_{(m+n-k)}.
\end{align}
Using the formal delta function
\[
 w^{-1}\delta\Bigl(\frac{z-x}{w}\Bigr) \ceq 
 \sum_{n\in\bbZ}\frac{(z-x)^n}{w^{n+1}} = 
 \sum_{m\in\bbN}\sum_{n\in\bbZ}(-1)^m\binom{n}{m}w^{-n-1}z^{n-m}x^m,
\]
it can be rewritten as \cite[(2.3.13)]{FHL}:
\begin{align}\label{eq:VA:com-res}
 [a(z),b(w)] = 
 \res_x \Bigl[w^{-1}\delta\Bigl(\frac{z-x}{w}\Bigr) \bigl(a(x)b\bigr)(w)\Bigr] dx.
\end{align}
%\sum_{n \in \bbZ} \sum_{j \ge 0} (-1)^j \binom{n}{j} z^{n-j} w^{-n-1} (a_{(j)}b)(w). 
%&= \sum_{j \ge 0} \frac{1}{j!}\bigl(\pd_w^j \delta(z,w)\bigr) (a_{(j)}b)(w) 
%where $i_{z,w}$ denotes the expansion in the domain $\abs{z}>\abs{w}$ 
%(or in terms of the variable $w/z$).

\item 
The locality condition implies the Jacobi identity:
\[
 x^{-1}\delta\Bigl(\frac{z-w}{ x}\Bigr)a(z)b(w) - p(a,b)
 x^{-1}\delta\Bigl(\frac{w-z}{-x}\Bigr)b(w)a(z) =
 w^{-1}\delta\Bigl(\frac{z-x}{ w}\Bigr)\bigl(a(x)b\bigr)(w).
\]
Taking $\res_z$ of both sides, we have
\begin{align}\label{eq:VA:Jac-res}
 \bigl(a(x)b\bigr)(w) = 
 \res_z\Bigl[x^{-1}\delta\Bigl(\frac{z-w}{ x}\Bigr)a(z)b(w)\Bigr]dz - p(a,b)
 \res_z\Bigl[x^{-1}\delta\Bigl(\frac{w-z}{-x}\Bigr)b(w)a(z)\Bigr]dz.
\end{align}
The Borcherds identity is a corollary of it:
\begin{align}\label{eq:VA:bor}
\begin{split}
 \sum_{j\ge0} (-1)^j \binom{n}{j}
 \bigl(a_{(m+n-j)}(b_{(k+j)}c)-&(-1)^np(a,b) b_{(n+k-j)}(a_{(m+j)}c)\bigr) \\
&=\sum_{j\ge0}\binom{m}{j}(a_{(n+j)}b)_{(m+k-j)}c.
\end{split}
\end{align}
%Let us recall it in the following form:
%For $a,b \in V$ and a rational function $f(z,w)$ with poles 
%only at $z=0$, $w=0$ or $z=w$, we have
%\begin{align}\label{eq:VA:Bor}
% \res_{z-w}\bigl[\bigl(a(z-w)b\bigr)(w)\iota_{w,z-w}f(z,w)\bigr] = 
% \res_{z}[a(z)b(w)\iota_{z,w}f(z,w)] -  \res_{z}[b(w)a(z)\iota_{w,z}f(z,w)],
%\end{align}
%where $\iota_{x,y}$ denotes the expansion in the region $\abs{x}>\abs{y}>0$.

\item 
We also have the skew-symmetry formula \cite[3.2.5]{FB}.
Using $e^{z\pd}\ceq\sum_{n\ge0}\frac{1}{n!}z^n\pd^n$, we have
\begin{align}\label{eq:VA:skew}
 a(z)b = p(a,b) e^{z\pd}b(-z)a.
\end{align}
In terms of the $(n)$-products, it is equivalent to 
\begin{align}\label{eq:VA:skew2}
 a_{(n)}b = p(a,b) \sum_{j\ge0} (-1)^{j+n+1}\pd^{(j)}(b_{(n+j)}a)
\end{align}
%for each $n\in \bbZ$.
\end{itemize}

\subsection{Huang's formulation of Zhu algebra}\label{ss:wtA:wtA}

Here we explain the geometric version of the Zhu algebra from \cite[\S6]{Hu}, 
with slight modifications.

\subsubsection{}

Let $V$ be a vertex algebra, and let $\gamma$ be a complex number or a formal variable.
For $a,b \in V$ and $n \in \bbZ_{\ge 1}$, consider the expression
\begin{align}\label{eq:wtA:bl}
 a \Hop{\gamma}{n} b \ceq \res_z \bigl[f_n(z;\gamma) a(z)b \bigr] dz, \quad 
 f_n(z;\gamma) \ceq 
 \begin{cases}
  \dfrac{\gamma^n e^{\gamma z}}{(e^{\gamma z}-1)^n} & 
  (\gamma \ne 0 \text{ or $\gamma$ is a formal variable}) \\
  z^{-n} & (\gamma=0)
 \end{cases}.
\end{align}
For the definition of $\res_z$, see \eqref{eq:VA:res}.
When $\gamma$ is a formal variable, 
we regard $f_n(z;\gamma)$ as a formal series of $z$:
\begin{align}\label{eq:wtA:c(j,n)}
 f_n(z;\gamma) = \sum_{j\ge0}c(j,n)\gamma^{j}z^{-n+j} = 
 z^{-n}-\frac{n-2}{2}\gamma^{} z^{-n+1}+\cdots \in \bbQ[\gamma]\dpr{z}
 \quad (c(j,n) \in \bbQ).
\end{align}
For any $n \in \bbZ_{\ge 1}$ and $j \in \bbN=\bbZ_{\ge 0}$, We have 
\begin{align*}
 c(0,n) = 1, \quad c(1,n) = -\frac{n-2}{2}, \quad c(n-1,n) = \delta_{n,1}, \quad 
 c(j,1) = \frac{1}{j!}B_j^+,
\end{align*}
where $B_j^+$ are the Bernoulli numbers defined by the generating function
$\sum_{j\ge0}\frac{1}{j!}B_j^+t^j=\frac{t}{1-e^{-t}}$. 
We have $B_0^+=1$, $B_1^+=\hf$, $B_2^+=\frac{1}{6}$, $B_3^+=0$, $B_4^+=-\frac{1}{30}$, and so on.
Then, the field condition \eqref{eq:VA:f} implies 
\begin{align}\label{eq:wtA:Hopn}
 a\Hop{\gamma}{n}b = \sum_{j=0}^{n+N(a,b)} c(j,n) \gamma^{j}a_{(j-n)}b = 
 a_{(-n)}b-\frac{n-2}{2}\gamma^{} a_{(-n+1)}b + \cdots \in V[\gamma].
\end{align}
In particular, for $n=1$, we have 
\begin{align}\label{eq:wtA:Hop1}
 a\Hop{\gamma}{1}b = \sum_{j\ge0} \gamma^j \frac{B_j^+}{j!} a_{(j-1)}b = 
 a_{(-1)}b+\frac{\gamma}{2}a_{(0)}b+\frac{\gamma^2}{12}a_{(1)}b+\cdots.
\end{align}

\begin{rmk}
Some remarks are in order.
\begin{enumerate}
\item 
The definition of the operation $\Hop{}{n}$ in \cite[\S6]{Hu} 
is given for the case $\gamma=2\pi i$,
and we modified it to accommodate any $\gamma\in\bbC$.
We also modified the operation $\Hop{}{n}$ in loc.\ cit.\ 
by multiplying it by $\gamma^{n-1}$ to define $\Hop{\gamma}{n}$, 
so that $a\Hop{\gamma}{n}b\in V[\gamma]$, 
not just $a\Hop{\gamma}{n}b\in V[\gamma^{\pm1}]$.
Our $\Hop{\gamma}{1}$ coincides with the operation ${\cdot}_{(f)}\cdot$ 
with $f(z) \ceq f_1(z;c)=ce^{cz}/(e^{cz}-1)$,  
introduced in \cite[(3.11)]{vEH} (replacing $c$ with $\gamma$).

\item 
The operation $\Hop{\gamma}{n}$ can be rewritten as 
\begin{align}\label{eq:wtA:blXZ}
 a \Hop{\gamma}{n} b \cdot \gamma^{-n+1} = 
 \res_z \biggl[ \frac{\gamma e^{\gamma z}}{(e^{\gamma z}-1)^n}a(z)b \biggr] dz = 
 \res_x \bigl[ x^{-n} a\bigl(\gamma^{-1}\log(1+x)\bigr)b \bigr] dx.
\end{align}
This follows from the change of variables $x = e^{\gamma z}-1$; 
see \eqref{eq:wtA:f} for its geometric interpretation.
Furthermore, this identity extends the definition 
of $\Hop{\gamma}{n}$ to all $n \in \bbZ$.
%Actually, we only use $\Hop{\gamma}{n}$ for $n\ge 0$.
\end{enumerate}
\end{rmk}

The following lemma will be useful in the argument below.

\begin{lem}
Let $V$ be a vertex algebra, and $\gamma$ be a complex number or a formal variable.
\begin{enumerate}
\item 
The vacuum $\vac$ of $V$ is the unit for the operation $\Hop{\gamma}{1}$.
In other words, for any $a \in V$, we have
\begin{align}\label{eq:wtA:Hop-vac}
 a \Hop{\gamma}{1} \vac = \vac \Hop{\gamma}{1} a = a.
\end{align}

\item
Let $\pd$ be the translation operator of $V$.
Then, for any $a,b \in V$ and $n \in \bbZ_{\ge1}$, we have
\begin{align}\label{eq:wtA:pd-Hop}
 (\pd a)\Hop{\gamma}{n}b = (n-1)\gamma a\Hop{\gamma}{n}b + n a\Hop{\gamma}{n+1}b.
\end{align}
\end{enumerate}
\end{lem}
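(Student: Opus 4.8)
The plan is to treat the two parts by different elementary mechanisms: part (1) is a direct consequence of the vacuum axiom \eqref{eq:VA:vac} together with the explicit coefficients $c(j,1)=B_j^+/j!$ already recorded, while part (2) rests on the translation axiom, integration by parts for the residue, and a single first-order differential identity satisfied by the weight functions $f_n(z;\gamma)$.

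For part (1), both identities follow by specializing the explicit expansion \eqref{eq:wtA:Hop1}, $a\Hop{\gamma}{1}b=\sum_{j\ge0}\gamma^j\frac{B_j^+}{j!}a_{(j-1)}b$, and invoking \eqref{eq:VA:vac}. For $\vac\Hop{\gamma}{1}a$ the vacuum axiom gives $\vac_{(n)}=\delta_{n,-1}\id_V$, hence $\vac_{(j-1)}a=\delta_{j,0}\,a$ and only the $j=0$ term survives. For $a\Hop{\gamma}{1}\vac$ the vacuum axiom gives $a_{(j-1)}\vac=0$ for $j\ge1$ and $a_{(-1)}\vac=a$, so again only $j=0$ contributes. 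In both cases the surviving coefficient is $B_0^+/0!=1$, yielding $a$.

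For part (2), the crux is the recursion
\[
 \pd_z f_n(z;\gamma) = -(n-1)\gamma\, f_n(z;\gamma) - n\, f_{n+1}(z;\gamma),
\]
which I would verify by differentiating $f_n=\gamma^n e^{\gamma z}(e^{\gamma z}-1)^{-n}$ directly; the computation is most transparent after setting $u=e^{\gamma z}$, so that $\pd_z u=\gamma u$ and $\pd_z f_n=\gamma^{n+1}u(u-1)^{-n-1}\bigl[(1-n)u-1\bigr]$, after which the decomposition $(1-n)u-1=(1-n)(u-1)-n$ lets me read off the coefficients of $f_n$ and $f_{n+1}$. Granting this, I would use the translation axiom \eqref{eq:VA:pd} in its consequential form $(\pd a)(z)=\pd_z a(z)$ to write $(\pd a)\Hop{\gamma}{n}b=\res_z[f_n\,\pd_z(a(z)b)]dz$, then integrate by parts: the residue of a total $z$-derivative vanishes, so $\res_z[\pd_z(f_n\cdot a(z)b)]dz=0$, and the product rule gives $\res_z[f_n\,\pd_z(a(z)b)]dz=-\res_z[(\pd_z f_n)\,a(z)b]dz$. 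Substituting the recursion for $\pd_z f_n$ and recognizing the two resulting residues as $a\Hop{\gamma}{n}b$ and $a\Hop{\gamma}{n+1}b$ reproduces \eqref{eq:wtA:pd-Hop} exactly.

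I do not expect a serious obstacle here; the only point demanding care is the legitimacy of the integration-by-parts step, for which I would note that $f_n\in\bbQ[\gamma]\dpr{z}$ and $a(z)b\in V\dpr{z}$ by the field condition \eqref{eq:VA:f}, so their product again lies in a space of Laurent series bounded below in $z$, on which $\res_z\circ\pd_z=0$ is immediate. As a consistency check I would also confirm the degenerate case $\gamma=0$, where $f_n(z;0)=z^{-n}$ gives $\pd_z f_n=-n\,z^{-n-1}=-n\,f_{n+1}$, matching the recursion with its $\gamma$-term dropped and recovering the familiar relation $(\pd a)\Hop{0}{n}b=n\,a\Hop{0}{n+1}b$ for the original Zhu operation.
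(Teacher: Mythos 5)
Your proof is correct and takes essentially the same route as the paper's: part (1) is read off from the vacuum axiom together with the expansion \eqref{eq:wtA:Hop1}, and part (2) combines the translation axiom with integration by parts (vanishing of the residue of a total derivative) and the key recursion $\pd_z f_n(z;\gamma)=-(n-1)\gamma f_n(z;\gamma)-n f_{n+1}(z;\gamma)$, which is exactly the identity the paper establishes. Your substitution $u=e^{\gamma z}$ for verifying the recursion and your separate check of the degenerate case $\gamma=0$ are only cosmetic variants of the paper's direct computation, which likewise notes that $f_n(z)=z^{-n}$ satisfies the same relation when $\gamma=0$.
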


\begin{proof}
We follow the fourth and fifth paragraphs of the proof of \cite[Proposition 6.1]{H}.
\begin{enumerate}
\item follows immediately from the vacuum axiom \eqref{eq:VA:vac} 
and the expression \eqref{eq:wtA:Hop1} of $\Hop{\gamma}{1}$.

\item 
We denote $f_n(z)\ceq f_n(z;\gamma)$, so that $a\Hop{\gamma}{n}b=\res_z[f_n(z)a(z)b]dz$.
If $\gamma$ is a non-zero complex number or a formal variable, 
then $f_n(z) = \gamma^n e^{\gamma z}/(e^{\gamma z}-1)^n$, and we have
\begin{align*}
   \pd_z f_n(z) 
&= \frac{ \gamma^{n+1}e^{\gamma z}}{(e^{\gamma z}-1)^n} -
   \frac{n\gamma^{n+1}e^{2\gamma z}}{(e^{\gamma z}-1)^{n+1}}  
 = \frac{ \gamma^{n+1}e^{\gamma z}}{(e^{\gamma z}-1)^n} -
   \frac{n\gamma^{n+1}e^{\gamma z}(e^{\gamma z}-1+1)}{(e^{\gamma z}-1)^{n+1}} \\
&=-\frac{(n-1)\gamma^{n+1}e^{\gamma z}}{(e^{\gamma z}-1)^n}  
  -\frac{n\gamma^{n+1} e^{\gamma z}}{(e^{\gamma z}-1)^{n+1}} = 
  - (n-1) \gamma f_n(z) - n f_{n+1}(z).
\end{align*}
If $\gamma=0$, then $f_n(z)=z^{-n}$, and we have the same relation.
Now, by the translation axiom \eqref{eq:VA:pd} and 
applying integration by parts, we have:
\begin{align*}
  (\pd a)\Hop{\gamma}{n}b 
&= \res_z\bigl[f_n(z)(\pd a)(z)b\bigr]dz
 = \res_z\bigl[f_n(z)\bigl(\pd_za(z)\bigr)b\bigr]dz
 =-\res_z\bigl[\bigl(\pd_z f(z)\bigr)a(z)b\bigr]dz \\
&= \res_z\bigl[\bigl((n-1) \gamma f_n(z) + n f_{n+1}(z)\bigr)a(z)b\bigr]dz 
 = (n-1) \gamma a\Hop{\gamma}{n}b + n a\Hop{\gamma}{n+1}b.
\end{align*}
\end{enumerate}
\end{proof}

Now, the following definitions make sense.

\begin{dfn}[{c.f.\ \cite[\S6]{Hu}}]\label{dfn:VA:wtA}
Let $V$ be a vertex algebra, and let $\gamma$ be a formal variable.
For $n \in \bbZ_{\ge1}$, we define
\begin{align*}
 V \Hop{\gamma}{n} V \ceq 
 \Span_{\bbC[\gamma]}\{a\Hop{\gamma}{n}b \mid a,b \in V\} \subset V[\gamma].
\end{align*}
We also define 
\begin{align}\label{eq:wtA:wtO}
 \wtO_\gamma(V) \ceq V \Hop{\gamma}{2} V, \quad
 \wtA_\gamma(V) \ceq V[\gamma]/\wtO_\gamma(V), 
\end{align}
and denote the equivalence class of $a \in V$ in the quotient space 
by $[a] \in \wtA_\gamma(V)$.
\end{dfn}

The next theorem provides an analogue, for our $\wtO_\gamma(V)$, 
of the properties of the subspace $O(V) \subset V$ used in the definition
of the original Zhu algebra $A(V)$ (or $\Zhu_H(V)$), as stated in \cite[Theorem 2.7]{DK}.

\begin{thm}\label[thm]{thm:wtA:wtO}
Let $V$ be a vertex algebra, and let $\wtO_\gamma(V)$ be as defined in \eqref{eq:wtA:wtO}.
\begin{enumerate}
\item \label{i:thm:wtO:ge2}
$\wtO_\gamma(V)$ contains all the elements $a\Hop{\gamma}{n}b$ 
for any $n\ge2$ and $a,b\in V$,
and $\wtO_\gamma(V)=\sum_{n\ge2}V\Hop{\gamma}{n}V$.

\item \label{i:thm:wtO:pd}
$\pd V$ is contained in $\wtO_\gamma(V)$, where $\pd$ is the translation operator of $V$. 

\item \label{i:thm:wtO:ideal}
$\wtO_\gamma(V)$ is a two-sided ideal with respect to the operation $\Hop{\gamma}{1}$.

\item \label{i:thm:wtO:assoc}
For any $a,b,c\in V$, we have
$(a\Hop{\gamma}{1}b)\Hop{\gamma}{1}c-a\Hop{\gamma}{1}(b\Hop{\gamma}{1}c) \in \wtO_\gamma(V)$.

\item \label{i:thm:wtO:(0)}
For any $a,b\in V$, we have
$a\Hop{\gamma}{1}b-p(a,b) b\Hop{\gamma}{1}a - \gamma a_{(0)}b \in \wtO_\gamma(V)$.
\end{enumerate}
\end{thm}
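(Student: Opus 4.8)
The plan is to prove the five assertions in the order \ref{i:thm:wtO:ge2}, \ref{i:thm:wtO:pd}, \ref{i:thm:wtO:(0)}, \ref{i:thm:wtO:assoc}, \ref{i:thm:wtO:ideal}, because this arrangement reduces the only substantial computation to associativity and lets the ideal property fall out for free. For \ref{i:thm:wtO:ge2} I would rearrange the recursion \eqref{eq:wtA:pd-Hop} into $a\Hop{\gamma}{n+1}b=\tfrac1n\bigl[(\pd a)\Hop{\gamma}{n}b-(n-1)\gamma\,a\Hop{\gamma}{n}b\bigr]$ for $n\ge1$; since both terms on the right lie in $V\Hop{\gamma}{n}V$, we get $V\Hop{\gamma}{n+1}V\subseteq V\Hop{\gamma}{n}V$, and iterating this inclusion yields $V\Hop{\gamma}{n}V\subseteq V\Hop{\gamma}{2}V=\wtO_\gamma(V)$ for every $n\ge2$, hence also $\wtO_\gamma(V)=\sum_{n\ge2}V\Hop{\gamma}{n}V$. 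For \ref{i:thm:wtO:pd} I specialize the same recursion to $n=1$, giving $(\pd a)\Hop{\gamma}{1}b=a\Hop{\gamma}{2}b$; taking $b=\vac$ and invoking the vacuum unit \eqref{eq:wtA:Hop-vac} gives $\pd a=a\Hop{\gamma}{2}\vac\in\wtO_\gamma(V)$.

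For the skew-symmetry relation \ref{i:thm:wtO:(0)} I would start from \eqref{eq:VA:skew}, writing $a\Hop{\gamma}{1}b=p(a,b)\res_z\bigl[f_1(z;\gamma)\,e^{z\pd}(b(-z)a)\bigr]dz$, and then discard the $(e^{z\pd}-1)$-part modulo $\pd V\subseteq\wtO_\gamma(V)$ by \ref{i:thm:wtO:pd}, leaving $p(a,b)\res_z[f_1(z;\gamma)b(-z)a]dz$. Substituting $z\mapsto-z$ introduces signs $(-1)^j$ on the coefficients $c(j,1)=B_j^+/j!$, which, since $B_j^+=0$ for odd $j\ge3$, flips only the $j=1$ term; I expect the result to be $p(a,b)\bigl(b\Hop{\gamma}{1}a-\gamma\,b_{(0)}a\bigr)$. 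The claim then reduces to $a_{(0)}b+p(a,b)b_{(0)}a\in\pd V$, which is the $n=0$ case of \eqref{eq:VA:skew2}. A byproduct I will reuse: the commutator formula \eqref{eq:VA:commutator} at $m=0$ shows each $c_{(0)}$ is a derivation of every $\Hop{\gamma}{n}$, so $c_{(0)}\wtO_\gamma(V)\subseteq\wtO_\gamma(V)$.

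The heart of the matter is associativity \ref{i:thm:wtO:assoc}. Following Huang \cite[Prop.\ 6.1]{Hu}, I would write both sides as double residues acting on $c$: using $(a\Hop{\gamma}{1}b)(w)=\res_x[f_1(x;\gamma)(a(x)b)(w)]dx$ together with the Jacobi identity \eqref{eq:VA:Jac-res} to expand $(a(x)b)(w)c$, the product $(a\Hop{\gamma}{1}b)\Hop{\gamma}{1}c$ becomes a double residue in $a(z)b(w)c$ and $b(w)a(z)c$ with prefactor $f_1(w;\gamma)f_1(z-w;\gamma)$, whereas $a\Hop{\gamma}{1}(b\Hop{\gamma}{1}c)$ is the iterated residue with prefactor $f_1(z;\gamma)f_1(w;\gamma)$. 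The coordinate change $x=e^{\gamma z}-1$ from \eqref{eq:wtA:blXZ} linearizes these transcendental prefactors into rational functions of $X=e^{\gamma z}-1$ and $W=e^{\gamma w}-1$ (for instance $f_1(z-w;\gamma)=\gamma(1+X)/(X-W)$), and I would expand the difference of the two prefactors as a $\bbC[\gamma]$-combination of products $f_p(z;\gamma)f_q(w;\gamma)$. I expect every term other than the matching leading one to carry an index $\ge2$ in some slot, hence to lie in $\wtO_\gamma(V)$ by \ref{i:thm:wtO:ge2}, giving \ref{i:thm:wtO:assoc}.

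Granting associativity, the ideal property \ref{i:thm:wtO:ideal} is short. A spanning element of $\wtO_\gamma(V)$ has the form $a\Hop{\gamma}{2}b=(\pd a)\Hop{\gamma}{1}b$, so by \ref{i:thm:wtO:assoc} and the $n=1$ recursion, $(a\Hop{\gamma}{2}b)\Hop{\gamma}{1}c\equiv(\pd a)\Hop{\gamma}{1}(b\Hop{\gamma}{1}c)=a\Hop{\gamma}{2}(b\Hop{\gamma}{1}c)\in\wtO_\gamma(V)$, which is the right-ideal statement; the left-ideal statement then follows from \ref{i:thm:wtO:(0)} and $c_{(0)}\wtO_\gamma(V)\subseteq\wtO_\gamma(V)$ noted above, since $c\Hop{\gamma}{1}u\equiv p(c,u)\,u\Hop{\gamma}{1}c+\gamma\,c_{(0)}u$ modulo $\wtO_\gamma(V)$. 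The main obstacle I anticipate is entirely in \ref{i:thm:wtO:assoc}: one must track the two expansion regions ($\abs{z}>\abs{w}$ versus $\abs{w}>\abs{z}$) produced by the two delta-function terms of the Jacobi identity, check that the coordinate change commutes with these expansions, and, most delicately, verify that after collecting terms every error is a genuine $\bbC[\gamma]$-combination of $\Hop{\gamma}{n}$-products with $n\ge2$ rather than merely a residue that vanishes for accidental reasons.
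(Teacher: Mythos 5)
Your proposal is correct, but it routes the ideal property through a genuinely different decomposition than the paper. The shared core is the double-residue computation with the coordinate change $x=e^{\gamma z}-1$: the paper proves the right-ideal half of \ref{i:thm:wtO:ideal} by exactly your Jacobi-identity expansion \eqref{eq:VA:Jac-res}, but for all $n\ge1$ at once (the expansions \eqref{eq:wtA:n1-1}--\eqref{eq:wtA:n1-2}), and then obtains associativity \ref{i:thm:wtO:assoc} as the $n=1$ specialization; it proves the left-ideal half by a second, independent residue computation using the commutator formula \eqref{eq:VA:com-res}. You invert this architecture: you perform the residue computation only at $n=1$ (i.e., for \ref{i:thm:wtO:assoc} itself), then recover the right ideal from the identity $a\Hop{\gamma}{2}b=(\pd a)\Hop{\gamma}{1}b$ (the $n=1$ case of \eqref{eq:wtA:pd-Hop}) combined with associativity, and the left ideal purely formally from \ref{i:thm:wtO:(0)}, the right ideal, and your observation that $c_{(0)}$ is a derivation of every $\Hop{\gamma}{n}$ --- which is indeed immediate from \eqref{eq:VA:commutator} at $m=0$, since $[c_{(0)},a_{(m)}]=(c_{(0)}a)_{(m)}$ leaves the coefficients $c(j,n)$ untouched, whence $c_{(0)}\wtO_\gamma(V)\subseteq\wtO_\gamma(V)$. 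This buys you one residue computation instead of two, at the cost of proving \ref{i:thm:wtO:(0)} before \ref{i:thm:wtO:ideal} (harmless, since your proof of \ref{i:thm:wtO:(0)} uses only \ref{i:thm:wtO:pd}) and a mild bookkeeping burden: \ref{i:thm:wtO:(0)} and \ref{i:thm:wtO:assoc} are stated for elements of $V$, so in the ideal argument you must extend them $\bbC[\gamma]$-bilinearly and decompose by parity before invoking $p(\cdot,\cdot)$ --- both routine. Your treatments of \ref{i:thm:wtO:ge2} and \ref{i:thm:wtO:pd} coincide with the paper's, and your Bernoulli-coefficient sign flip in \ref{i:thm:wtO:(0)} (only $j=1$ survives the substitution $z\mapsto-z$ since $B_j^+=0$ for odd $j\ge3$) is the same calculation as the paper's function identity $-f_1(-w;\gamma)=f_1(w;\gamma)-\gamma$, just carried out coefficientwise; the final reduction $a_{(0)}b+p(a,b)b_{(0)}a\in\pd V$ via \eqref{eq:VA:skew2} also matches the paper.
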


\begin{proof}
For simplicity, we denote $\wtO_\gamma\ceq\wtO_\gamma(V)$.
\begin{enumerate}
\item follows from $V\Hop{\gamma}{n+1}V \subset V\Hop{\gamma}{n}V$ for all $n\in\bbZ_{\ge2}$,
which is implied by the formula \eqref{eq:wtA:pd-Hop}.

\item 
The formulas \eqref{eq:wtA:Hop-vac} and \eqref{eq:wtA:pd-Hop} imply
$\pd a = (\pd a)\Hop{\gamma}{1}\vac = a\Hop{\gamma}{2}\vac \in \wtO_\gamma$ 
for any $a \in V$.

\item
For simplicity, we denote 
$f_n(z) \ceq f_n(z;\gamma) = \gamma^n e^{\gamma z}/(e^{\gamma z}-1)^n$,
so that $a\Hop{\gamma}{n}b=\res_z[f_n(z)a(z)b]dz$.
We follow the first paragraph of the proof of \cite[Proposition 6.1]{H} 
(where the case $\gamma=2\pi i$ is studied)
to show that $\wtO_\gamma$ is a left ideal with respect to $\Hop{\gamma}{1}$, 
that is, $V\Hop{\gamma}{1}\wtO_\gamma \subset \wtO_\gamma$.
For any $a,b,c \in V$ and $n\in\bbZ_{\ge1}$, 
the commutator formula \eqref{eq:VA:com-res} implies 
\begin{align}
\notag
  a\Hop{\gamma}{1}(b\Hop{\gamma}{n}c) 
&=\res_z \bigl[\res_w [f_1(z)f_n(w)a(z)b(w)c] dw \bigr] dz \\
\label{eq:wtA:1n-1}
&=p(a,b)\res_z \bigl[\res_w [f_1(z)f_n(w)b(w)a(z)c] dw \bigr] dz \\
\label{eq:wtA:1n-2}
& \quad 
 +\res_z \Bigl[\res_w \Bigl[\res_x \Bigl[
   f_1(z)f_n(w) w^{-1}\delta\Bigl(\frac{z-x}{w}\Bigr)\bigl(a(x)b\bigr)(w)c
  \Bigr] dx \Bigr] dw \Bigr] dz.
\end{align}
Now, we assume $n \ge 2$.
Then, note that the first term \eqref{eq:wtA:1n-1} of the right hand side is equal to
$\res_z\bigl[f_1(z) [\res_w [f_n(w)b(w)a(z)c] dw \bigr] dz$, and
the expression $\res_w [f_n(w) b(w)a(z)c] dw$ belongs to $(V\Hop{\gamma}{n}V)[z^{\pm1}]$.
Hence $\eqref{eq:wtA:1n-1} \in V\Hop{\gamma}{n}V$, and by \ref{i:thm:wtO:ge2} above, 
we have $\eqref{eq:wtA:1n-1} \in \wtO_\gamma$.
On the other hand, the second term \eqref{eq:wtA:1n-2} is equal to 
$\res_z\bigl[f_1(z) [\res_x [f_n(z-x)\bigl(a(x)b\bigr)(z-x)c] dx \bigr] dz$, 
and a similar argument shows $\eqref{eq:wtA:1n-2} \in \wtO_\gamma$.
Since the left hand side spans $V\Hop{\gamma}{1}\wtO_\gamma$, 
we have $V\Hop{\gamma}{1}\wtO_\gamma \subset \wtO_\gamma$.

Next, to show that $\wtO_\gamma$ is a right ideal, 
we follow the second paragraph of the proof of \cite[Proposition 6.1]{H}.
For any $a,b,c \in V$ and $n \in \bbZ_{\ge 1}$, 
applying the Jacobi identity \eqref{eq:VA:Jac-res}, we can compute as follows: 
\begin{align}
\notag
&(a\Hop{\gamma}{n}b)\Hop{\gamma}{1}c 
 =\res_w\bigl[\res_z\bigl[f_1(w)f_n(z)\bigl(a(z)b\bigr)(w)c\bigr]dz\Bigr]dw \\
\notag
&=\res_x\Bigl[\res_w\Bigl[\res_z\Bigl[
   f_1(w)f_n(z)z^{-1}\delta\Bigl(\frac{x-w}{ z}\Bigr)a(x)b(w)c\Bigr]dz\Bigr]dw\Bigr]dx \\
\notag
&\quad -p(a,b)
  \res_x\Bigl[\res_w\Bigl[\res_z\Bigl[
   f_1(w)f_n(z)z^{-1}\delta\Bigl(\frac{w-x}{-z}\Bigr)b(w)a(x)c\Bigr]dz\Bigr]dw\Bigr]dx \\
\notag
&=\res_x\bigl[\res_w\bigl[f_1(w)f_n(x-w)a(x)b(w)c\bigr]dw\bigr]dx \\
\notag
&\quad - p(a,b) 
  \res_x\bigl[\res_w\bigl[f_1(w)f_n(-(w-x))b(w)a(x)c\bigr]dw\bigr]dx \\
\notag
&=\res_x\biggl[\res_w\biggl[
   \frac{\gamma^n e^{\gamma(x-w)}}
        {\bigl((e^{\gamma x}-1)e^{-\gamma w}+(e^{-\gamma w}-1)\bigr)^n}
   f_1(w)a(x)b(w)c\biggr]dw\biggr]dx \\
\notag
&\quad - p(a,b) 
  \res_x\biggl[\res_w\biggl[
   \frac{\gamma^n e^{-\gamma(w-x)}}
        {\bigl((e^{-\gamma w}-1)e^{\gamma x}+(e^{\gamma x}-1)\bigr)^n}
   f_1(w)b(w)a(x)c\biggr]dw\biggr]dx \\
\label{eq:wtA:n1-1}
&=\sum_{k\ge0}\binom{-n}{k} \res_x\biggl[\res_w\biggl[
   \frac{\gamma^n e^{\gamma(x-w)}e^{(n+k)\gamma w}}{(e^{\gamma x}-1)^{n+k}}
    (e^{-\gamma w}-1)^k f_1(w)a(x)b(w)c\biggr]dw\biggr]dx \\
\label{eq:wtA:n1-2}
&\quad - p(a,b) 
  \sum_{k\ge0}\binom{-n}{k} \res_x\biggl[\res_w\biggl[
   \frac{\gamma^n e^{-\gamma(w-x)}e^{-(n+k)\gamma x}}{(e^{-\gamma w}-1)^{n+k}}
   (e^{\gamma x}-1)^k f_1(w)b(w)a(x)c\biggr]dw\biggr]dx.
\end{align}
Then, each summand in the first sum \eqref{eq:wtA:n1-1} is equal to 
\begin{align}\label{eq:wtA:n1-1zw}
 \res_x\bigl[\res_w\bigr[f_{n+k}(x)g_1(w)a(x)b(w)c\bigr]dw\bigr]dx
\end{align}
with $g_1(w) \ceq 
 f_1(w) e^{-\gamma w} e^{(n+k)\gamma w} \cdot \gamma^{-k} (e^{-\gamma w}-1)^k
 \in \bbQ[\gamma]\dpr{w}$.
Then, since the inner residue $\res_w\bigl[g_1(w)b(w)c\bigr]dw$
belongs to $V[\gamma]$, 
we have $\eqref{eq:wtA:n1-1zw} \in V\Hop{\gamma}{n+k}V$. 
Hence, by \ref{i:thm:wtO:ge2} above, 
the sum \eqref{eq:wtA:n1-1} belongs to $\wtO_\gamma$ if $n \ge 2$.
Similarly, each summand in the second sum \eqref{eq:wtA:n1-2} is equal to 
\[
 (-1)^{n+k} \res_x\bigl[\res_w\bigl[h(w)g_2(x)a(x)b(w)c\bigr]dw\bigr]dx
\]
with %$h(w)\ceq \gamma^n e^{(n+k)\gamma w}/(e^{\gamma w}-1)^{n+k+1}$ 
$h(w)\ceq \gamma^{n+k+1}e^{(n+k)\gamma w}/(e^{\gamma w}-1)^{n+k+1}$ and 
$g_2(x) \ceq 
 e^{-(n+k-1)\gamma x} \cdot \gamma^{-k}(e^{\gamma x}-1)^k \in \bbC[\gamma]\dbr{x}$.
We see that $h(w)$ is a $\bbZ[\gamma]$-linear combination of 
$f_2(w),f_3(w),\dotsc,f_{n+k+1}(w)$ if $n+k \ge 1$.
Thus, the sum \eqref{eq:wtA:n1-2} belongs to $\wtO_\gamma$ by \ref{i:thm:wtO:ge2} above.
Hence, when $n \ge 2$, we have $(a\Hop{\gamma}{n}b)\Hop{\gamma}{1}c \in \wtO_\gamma$,
which shows that $\wtO_\gamma\Hop{\gamma}{1}V \subset \wtO_\gamma$.

\item 
We follow the third paragraph of the proof of \cite[Proposition 6.1]{Hu}.
We set $n=1$ in the latter part of the proof of \ref{i:thm:wtO:ideal} above.
Then, \eqref{eq:wtA:n1-1} is a sum of 
$a\Hop{\gamma}{1}(b\Hop{\gamma}{1}c)$ and some elements in $\wtO_\gamma$,
and \eqref{eq:wtA:n1-2} belongs to $\wtO_\gamma$.
Hence, we have 
$(a\Hop{\gamma}{1}b)\Hop{\gamma}{1}c-a\Hop{\gamma}{1}(b\Hop{\gamma}{1}c)\in\wtO_\gamma$.

\item
Following the sixth paragraph of the proof of \cite[Proposition 6.1]{Hu}, 
we have 
\begin{align*}
 a\Hop{\gamma}{1}b 
&=\res_z\bigl[f_1(z)a(z)b\bigr]dz
 =p(a,b)\res_z\bigl[f_1(z)e^{z\pd}b(-z)a\bigr]dz \\
&\equiv p(a,b)\res_z\bigl[f_1(z)b(-z)a\bigr]dz \bmod \wtO_\gamma 
 =p(a,b)\res_w\bigl[-f_1(-w)b(w)a\bigr]dw \\ 
&=p(a,b)\res_w\bigl[(f_1(w)-\gamma)b(w)a\bigr]dw
 =p(a,b)\bigl(b\Hop{\gamma}{1}a-\gamma b_{(0)}a\bigr) \\
&\equiv p(a,b) b\Hop{\gamma}{1}a+\gamma a_{(0)}b \bmod \wtO_\gamma.
\end{align*}
In the second equality, we used the skew-symmetry \eqref{eq:VA:skew}.
In the third equivalence, we used \ref{i:thm:wtO:pd} above.
In the fourth equality, we changed the variable as $w\ceq-z$.
In the final equivalence, we used $b_{(0)}a\equiv -p(a,b)a_{(0)}b \bmod \wtO_\gamma$
which follows from the skew-symmetry \eqref{eq:VA:skew2} and \ref{i:thm:wtO:pd} above.
\end{enumerate}
\end{proof}

By \cref{thm:wtA:wtO} \ref{i:thm:wtO:ideal} and \ref{i:thm:wtO:assoc}, 
we have an associative $\bbC[\gamma]$-superalgebra
\[
 \wtA_\gamma(V) \ceq V[\gamma]/\wtO_\gamma(V),
\]
with the product induced by the operation $\Hop{\gamma}{1}$ on $V[\gamma]$ 
(extended linearly over $\bbC[\gamma]$):
\[
 [a] \Hop{\gamma}{} [b] \ceq [a \Hop{\gamma}{1} b] \in \wtA_\gamma(V) 
 \quad (a,b \in V[\gamma]).
\] 
Moreover, by \eqref{eq:wtA:Hop-vac}, 
the image $[\vac]$ of the vacuum is the unit of this superalgebra.

\begin{rmk}\label[rmk]{rmk:wtA:geom}
Some remarks on the product $\Hop{\gamma}{}$ are in order.
\begin{enumerate}
\item 
\cite[\S3]{vEH} presents a quite different proof of 
\cref{thm:wtA:wtO} \ref{i:thm:wtO:ideal} and \ref{i:thm:wtO:assoc}.

\item
As explained in \cite[Remark 6.2]{Hu}, the product $\Hop{\gamma}{}$ on $\wtA_\gamma(V)$ 
has the following geometric interpretation.
Consider the mutually inverse maps  
\begin{align}\label{eq:wtA:f}
 f\colon z \lmto x \ceq e^{\gamma z}-1, \quad 
 g\colon x \lmto z \ceq \gamma^{-1}\log(1+x).
\end{align}
Then $g$ maps an annulus in $\bbC \subset \bbP^1$ with non-homogeneous coordinate $x$
to a parallelogram in the universal covering of a torus $T$ with coordinate $z$.
Then the product $\Hop{\gamma}{}$ can be understood as the constant term of the pullback 
of the local vertex operator on $T$ by the map $g$ to the annulus.
\end{enumerate}
\end{rmk}

The definition of $\wtA_\gamma(V)$ can be specialized to 
$\gamma=c$ for any $c \in \bbC\setminus\{0\}$.
Explicitly, let $\wtO(V)_{\gamma=c}$ be the $\bbC$-span of 
the elements $\{a\Hop{\gamma=c}{2}b \mid a,b \in V\}$.
Then 
\[
 \wtA_{\gamma=c}(V) \ceq V/\wtO_{\gamma=c}(V)
\]
is a unital associative $\bbC$-superalgebra
with the product $\Hop{\gamma=c}{}$ induced by $\Hop{\gamma=c}{1}$ on $V$.
The case $c=1$ will be the primary focus of our discussion.

\begin{dfn}\label[dfn]{dfn:wtA:wtA}
The Zhu algebra of a vertex algebra $V$ is the unital associative $\bbC$-superalgebra
\[
 (\wtA(V),\bl,1) \ceq (\wtA_{\gamma=1}(V),\Hop{\gamma=1}{},[\vac]).
\] 
\end{dfn}

We collect some few basic properties of the algebra $\wtA(V)$.

\begin{cor}\label[cor]{cor:wtA:pd=0}
Let $(V,Y,\vac,\pd)$ be a vertex algebra.
%\begin{enumerate}
%\item
%For any $a,b\in V$, we have $(\pd a)\Hop{\gamma}{}b\in\wtO_{\gamma}(V)$.
%In particular, for any $a \in V$, we have 
The following formulas hold in $\wtA(V)$ for any $a,b \in V$. 
\begin{align}\label{eq:wtA:pd=0}
&[\pd a] = 0,   \\
\label{eq:wtA:a0b}
&[[a],[b]]\ceq [a] \bl [b] - p(a,b) [b] \bl [a] = [a_{(0)}b].
\end{align}

%\item
%For any $c\in\bbC\setminus\{0\}$, we have a superalgebra isomorphism
%$\wtA_{\gamma=c}(V) \sto \wtA(V) = \wtA_{\gamma=1}(V)$, $[a] \mto c [a]$ ($a\in V$). 
%In particular, $\wtA_{\gamma=c}(V)$ are isomorphic for all $c \in \bbC\setminus\{0\}$.
%\end{enumerate}
\end{cor}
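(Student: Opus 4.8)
The plan is to deduce both identities directly from \cref{thm:wtA:wtO} by specializing the formal variable $\gamma$ to $1$. The one structural point I would record first is that the $\bbC$-algebra map $\bbC[\gamma]\to\bbC$, $\gamma\mapsto 1$, induces a surjection $V[\gamma]\to V$ sending each spanning element $a\Hop{\gamma}{2}b$ of $\wtO_\gamma(V)$ to $a\Hop{\gamma=1}{2}b$, and therefore carries $\wtO_\gamma(V)$ into $\wtO_{\gamma=1}(V)=\wtO(V)$ (compare \cref{dfn:VA:wtA}). Hence any element of $V[\gamma]$ that is known to lie in $\wtO_\gamma(V)$ becomes, after setting $\gamma=1$, an element of $\wtO(V)$, i.e.\ it represents $0$ in $\wtA(V)=\wtA_{\gamma=1}(V)$. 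This compatibility is the only thing that needs checking, and it is immediate; there is no genuine obstacle.

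For \eqref{eq:wtA:pd=0} I would reuse the computation from the proof of \cref{thm:wtA:wtO}\,\ref{i:thm:wtO:pd}: combining the unit property \eqref{eq:wtA:Hop-vac} with the derivation formula \eqref{eq:wtA:pd-Hop} at $n=1$ gives $\pd a=(\pd a)\Hop{\gamma}{1}\vac=a\Hop{\gamma}{2}\vac$, which is a spanning element of $\wtO_\gamma(V)$. Specializing $\gamma=1$ then yields $\pd a=a\Hop{\gamma=1}{2}\vac\in\wtO(V)$, so $[\pd a]=0$.

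For \eqref{eq:wtA:a0b} I would apply \cref{thm:wtA:wtO}\,\ref{i:thm:wtO:(0)} verbatim, namely
\[
 a\Hop{\gamma}{1}b-p(a,b)\,b\Hop{\gamma}{1}a-\gamma\,a_{(0)}b\in\wtO_\gamma(V),
\]
and specialize $\gamma=1$. By the compatibility above the left-hand side lands in $\wtO(V)$, so modulo $\wtO(V)$ we get $a\Hop{\gamma=1}{1}b-p(a,b)\,b\Hop{\gamma=1}{1}a=a_{(0)}b$. Since the product $\bl$ on $\wtA(V)$ is by definition induced by $\Hop{\gamma=1}{1}$, this reads $[a]\bl[b]-p(a,b)\,[b]\bl[a]=[a_{(0)}b]$ in $\wtA(V)$, which is precisely the super-commutator identity $[[a],[b]]=[a_{(0)}b]$. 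The only care required is to match the definition of $\bl$ with the specialized operation $\Hop{\gamma=1}{1}$, after which both displayed formulas follow without further computation.
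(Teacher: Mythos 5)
Your proposal is correct and matches the paper's own proof, which simply invokes \cref{thm:wtA:wtO}\,\ref{i:thm:wtO:pd} and \ref{i:thm:wtO:(0)} (the paper has already noted, after \cref{rmk:wtA:geom}, that the whole construction specializes at $\gamma=c\ne0$). Your extra check that the specialization $\gamma\mapsto1$ carries $\wtO_\gamma(V)$ into $\wtO_{\gamma=1}(V)=\wtO(V)$ is the implicit step the paper leaves to the reader, so you have just made the same argument explicit.
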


\begin{proof}
These follow from \cref{thm:wtA:wtO} \ref{i:thm:wtO:pd} and \ref{i:thm:wtO:(0)}.
%(2) follows immediately from the definition. 
\end{proof}

\begin{rmk}
As will be recalled in \cref{fct:A:A} of \cref{ss:wtA:A}, 
formulas similar to \eqref{eq:wtA:pd=0} and \eqref{eq:wtA:a0b} 
hold in the original Zhu algebra $A(V)$ (or $\Zhu_H(V)$).
However, the corresponding formulas \eqref{eq:A:T+H-com} take a more complicated form.
Therefore, we may expect that determining $\wtA(V)$ is easier than determining $\Zhu_H(V)$.
In \cref{s:N=124} below, we see that this is indeed the case, 
at least when $V$ is an $N=1,2$ or $4$ superconformal vertex algebra.
\end{rmk}

We can also consider the specialization $\gamma=0$.
In this case, we have $a\Hop{\gamma=0}{n}b = a_{(n)}b$ for $a,b \in V$,  so that
\[
 \wtO_{\gamma=0}(V) = C_2(V) \ceq \Span_{\bbC}\{a_{(-2)}b \mid a,b \in V\}, \quad 
 \wtA_{\gamma=0}(V) =   R(V) \ceq V/C_2(V).
\]
The statements in \cref{thm:wtA:wtO} still hold, 
and we have an associative product 
\[
 [a]\cdot[b] \ceq [a \Hop{\gamma=0}{1}b] = [a_{(1)}b] \quad (a,b \in V).
\]
The algebra $R(V)$ is nothing but the $C_2$-Poisson algebra $R(V)$ \cite[\S4.4]{Zhu}. 
The product $\cdot$ is supercommutative, 
and $R(V)$ is equipped with a Poisson bracket $\{\cdot,\cdot\}$ given by
\begin{align}\label{eq:wtA:C2-PB}
 \{[a],[b]\} \ceq [a_{(0)}b] \quad (a,b \in V).
\end{align}
Hence, $\wt{A}_{\gamma=0}(V)$ is the semiclassical limit of
the algebra $\wtA_{\gamma=c}(V)$ under $c \to 0$.
Let us summarize as:

\begin{prp}\label[prp]{prp:wtA:gamma=0}
Let $V$ be a vertex algebra. 
Then the associative algebra $\wt{A}_{\gamma=0}(V)$ is isomorphic to 
the $C_2$-Poisson algebra $R(V)$.
Hence, it is supercommutative, 
and equipped with the Poisson bracket \eqref{eq:wtA:C2-PB}.
\end{prp}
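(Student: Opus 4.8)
The plan is to recognize that the identification $\wtA_{\gamma=0}(V)=R(V)$ is forced by the definitions, so that the substantive content is the verification of the Poisson structure. First I would record that the specialization $f_n(z;0)=z^{-n}$ in \eqref{eq:wtA:bl} gives $a\Hop{\gamma=0}{n}b=a_{(-n)}b$, whence $\wtO_{\gamma=0}(V)=\Span_{\bbC}\{a_{(-2)}b\}=C_2(V)$ and the induced product is $[a]\cdot[b]=[a_{(-1)}b]$. Associativity is inherited from \cref{thm:wtA:wtO} \ref{i:thm:wtO:assoc} specialized to $\gamma=0$, so $\wtA_{\gamma=0}(V)$ agrees with Zhu's algebra $R(V)=V/C_2(V)$ as an associative algebra and the asserted isomorphism is the identity map; only the Poisson data remain to be checked.

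For supercommutativity I would specialize \cref{thm:wtA:wtO} \ref{i:thm:wtO:(0)} to $\gamma=0$: the term $\gamma a_{(0)}b$ drops out and one is left with $a_{(-1)}b-p(a,b)b_{(-1)}a\in C_2(V)$, i.e.\ $[a]\cdot[b]=p(a,b)[b]\cdot[a]$. Alternatively one expands the skew-symmetry \eqref{eq:VA:skew2} at $n=-1$ and discards the terms lying in $\pd V\subset C_2(V)$, using \cref{thm:wtA:wtO} \ref{i:thm:wtO:pd}.

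The engine for the bracket is the commutator formula \eqref{eq:VA:commutator} at $m=0$, where $\binom{0}{k}=\delta_{k,0}$ collapses the sum to $[a_{(0)},b_{(n)}]=(a_{(0)}b)_{(n)}$ for every $n\in\bbZ$; equivalently, $a_{(0)}$ is a super-derivation of each $n$-th product, $a_{(0)}(b_{(n)}c)=(a_{(0)}b)_{(n)}c+p(a,b)b_{(n)}(a_{(0)}c)$. The case $n=-2$ shows $a_{(0)}C_2(V)\subset C_2(V)$, which combined with the congruence $a_{(0)}b\equiv -p(a,b)b_{(0)}a \pmod{C_2(V)}$ (from \eqref{eq:VA:skew2} at $n=0$ modulo $\pd V$) shows that $\{[a],[b]\}\ceq[a_{(0)}b]$ descends to a well-defined, super-antisymmetric bracket on $R(V)$, as in \eqref{eq:wtA:C2-PB}. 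Passing the derivation identity at $n=-1$ and at $n=0$ to the quotient then yields super-Leibniz and super-Jacobi respectively, so $(R(V),\cdot,\{\cdot,\cdot\})$ is a Poisson superalgebra.

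I expect no deep obstacle: once $\binom{0}{k}=\delta_{k,0}$ truncates the commutator formula, each axiom reduces to a one-line identity already valid in $V$. The only genuine care is the bookkeeping of the super-signs $p(\cdot,\cdot)$ and the confirmation that $C_2(V)$ is a Poisson ideal so that both operations descend—precisely where the $n=-2$ derivation property and the skew-symmetry enter. Conceptually this is the classical limit $\gamma\to0$ of the family $\wtA_\gamma(V)$: by \cref{thm:wtA:wtO} \ref{i:thm:wtO:(0)} the supercommutator in $\wtA_\gamma(V)$ equals $\gamma[a_{(0)}b]$, so dividing by $\gamma$ before specializing recovers \eqref{eq:wtA:C2-PB} and gives an alternative, deformation-theoretic route to the same conclusion.
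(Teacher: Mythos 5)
Your proposal is correct and follows essentially the same route as the paper: specialize $f_n(z;0)=z^{-n}$ to identify $\wtO_{\gamma=0}(V)=C_2(V)$ and the product with $[a_{(-1)}b]$, obtain supercommutativity from \cref{thm:wtA:wtO} \ref{i:thm:wtO:(0)} as the $\gamma$-linear term drops, and read the bracket $\{[a],[b]\}=[a_{(0)}b]$ as the semiclassical limit. The only difference is that you verify the Poisson axioms (well-definedness via the $m=0$ commutator formula and skew-symmetry, Leibniz and Jacobi from the derivation property at $n=-1,0$) explicitly, whereas the paper delegates this to Zhu's construction of the $C_2$-Poisson algebra — and note your indices $a_{(-n)}b$, $[a_{(-1)}b]$ are the correct specialization, where the paper's display has a sign slip writing $a_{(n)}b$ and $[a_{(1)}b]$.
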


Recall (from \cite[1.3.6 Lemma]{FB} for example) 
the tensor product $V \otimes W$ of vertex algebras $V$ and $W$,
whose state-filed correspondence is $(a \otimes b)(z) \ceq a(z)b(z)$ 
for $a \in V$ and $b \in W$.
%Its Zhu algebra is given as follows.

\begin{prp}\label{prp:wtA:wtA-otimes}
Let $V$ and $W$ be  vertex algebras, and let $\gamma$ be an indeterminate. 
Then, we have a $\bbC[\gamma]$-superalgebra isomorphism
\begin{align*}
 \varphi\colon 
 \wtA_\gamma(V \otimes W) \lsto \wtA_\gamma(V) \otimes \wtA_\gamma(W), \quad 
 [a \otimes b] \lmto [a] \otimes [b].
\end{align*}
\end{prp}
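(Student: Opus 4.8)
The statement asserts that the Zhu-algebra functor $\wtA_\gamma(-)$ is monoidal, sending the tensor product of vertex algebras to the tensor product of their Zhu algebras, with the explicit map $[a\otimes b]\mapsto [a]\otimes[b]$. The plan is to show that this formula gives a well-defined $\bbC[\gamma]$-linear map, that it is an algebra homomorphism, and finally that it is bijective. The natural strategy is to first produce a candidate map at the level of the ambient spaces $(V\otimes W)[\gamma]$ and $V[\gamma]\otimes_{\bbC[\gamma]} W[\gamma]$, then check compatibility with the relevant ideals $\wtO_\gamma$, and finally invert it.

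The main computational input is an explicit formula for $\Hop{\gamma}{n}$ on the tensor product. Since the state-field correspondence of $V\otimes W$ is $(a\otimes b)(z)=a(z)b(z)$ (acting as $a(z)\otimes b(z)$ on $V\otimes W$), I would expand
\begin{align*}
 (a\otimes b)\Hop{\gamma}{n}(a'\otimes b')
 = \res_z\bigl[f_n(z;\gamma)\,(a(z)\otimes b(z))(a'\otimes b')\bigr]dz
\end{align*}
and reduce it, via \eqref{eq:wtA:c(j,n)}, to a $\bbC[\gamma]$-combination of terms $(a_{(i)}a')\otimes(b_{(j)}b')$. The key algebraic fact I would isolate is that $\Hop{\gamma}{1}$ on $V\otimes W$ is compatible with the product $\Hop{\gamma}{}\otimes\Hop{\gamma}{}$ modulo the ideals: concretely, that $(a\otimes b)\Hop{\gamma}{1}(a'\otimes b')$ equals $(a\Hop{\gamma}{1}a')\otimes(b\Hop{\gamma}{1}b')$ up to an element of $\wtO_\gamma(V\otimes W)$. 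For the multiplicativity of the unit, I would use \eqref{eq:wtA:Hop-vac} together with $\vac_{V\otimes W}=\vac_V\otimes\vac_W$.

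The crux — and the step I expect to be the main obstacle — is the two-sided compatibility with the defining ideals, i.e. showing both $\wtO_\gamma(V\otimes W)\subseteq \wtO_\gamma(V)\otimes W[\gamma]+V[\gamma]\otimes\wtO_\gamma(W)$ and the reverse containment, so that the induced map on quotients is both well-defined and injective. For one direction I would use \cref{thm:wtA:wtO}\ref{i:thm:wtO:pd} and \ref{i:thm:wtO:ideal}: the translation operator on $V\otimes W$ is $\pd_V\otimes\id+\id\otimes\pd_W$, so $\pd(V\otimes W)$ lands in the correct sum of ideals, and elements of the form $\vac\otimes c$ or $c\otimes\vac$ with $c\in\wtO_\gamma$ are handled by the ideal property of \ref{i:thm:wtO:ideal}. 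The more delicate direction is showing that a generic generator $(a\otimes b)\Hop{\gamma}{2}(a'\otimes b')$ of $\wtO_\gamma(V\otimes W)$ decomposes appropriately; here the expansion above, combined with \ref{i:thm:wtO:ge2} (that $\wtO_\gamma=\sum_{n\ge2}V\Hop{\gamma}{n}V$), should let me rewrite each $a_{(i)}a'\otimes b_{(j)}b'$ in terms that separately lie in one factor's ideal.

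Once the ideal-compatibility is established in both directions, well-definedness, injectivity, and surjectivity all follow: surjectivity is immediate since $[a]\otimes[b]$ manifestly generate the target, injectivity follows from the reverse containment of ideals, and the algebra-homomorphism property reduces to the multiplicativity of $\Hop{\gamma}{1}$ modulo ideals noted above. I would close by remarking that the same argument specializes verbatim to $\gamma=c$ for any $c\in\bbC$, giving in particular the isomorphism $\wtA(V\otimes W)\cong\wtA(V)\otimes\wtA(W)$ needed to split off the free-field factors in the Case I\!I computations of \cref{thm:main}.
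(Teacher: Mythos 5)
Your scaffolding matches the paper's proof exactly: everything hinges on the ideal identity $\wtO_\gamma(V\otimes W)=\wtO_\gamma(V)\otimes W+V\otimes\wtO_\gamma(W)$, after which well-definedness, bijectivity, and multiplicativity all follow. But at the crux --- the containment $\wtO_\gamma(V\otimes W)\subset\wtO_\gamma(V)\otimes W+V\otimes\wtO_\gamma(W)$ --- your plan as stated would fail. Expanding $(a\otimes a')\Hop{\gamma}{2}(b\otimes b')$ into raw modes via \eqref{eq:wtA:c(j,n)} produces a double sum of terms $(a_{(i)}b)\otimes(a'_{(j)}b')$, and an \emph{individual} such term need not lie in $\wtO_\gamma(V)\otimes W+V\otimes\wtO_\gamma(W)$: for instance, the second factor of $a_{(-1)}b\otimes a'_{(-2)}b'$ lies in $C_2(W)$ but not in general in $\wtO_\gamma(W)$, since the latter is spanned over $\bbC[\gamma]$ by the \emph{corrected} elements $a'\Hop{\gamma}{2}b'=a'_{(-2)}b'+O(\gamma)$, not by bare $(-2)$-products. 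So "rewriting each $a_{(i)}a'\otimes b_{(j)}b'$ in terms that separately lie in one factor's ideal" cannot be done term by term; the whole double sum must be regrouped in a structured way, and this regrouping is precisely the step you have not supplied. The paper (adapting Milas's argument for $A(V\otimes W)$) obtains it by working in the coordinate $x$ of \eqref{eq:wtA:blXZ} and applying the elementary factorization $\res_x[f(x)g(x)]dx=\sum_{i\in\bbZ}\bigl(\res_x[x^{-i}f(x)]dx\bigr)\bigl(\res_x[x^{i-1}g(x)]dx\bigr)$, which yields the closed identity $(a\otimes a')\Hop{\gamma}{2}(b\otimes b')=\sum_{i\in\bbZ}(a\Hop{\gamma}{i+2}b)\otimes(a'\Hop{\gamma}{-i+1}b')$ as in \eqref{eq:wtA:aa2bb}; then \cref{thm:wtA:wtO} \ref{i:thm:wtO:ge2} places the $i\ge0$ summands in $\wtO_\gamma(V)\otimes W$ and the $i<0$ summands in $V\otimes\wtO_\gamma(W)$. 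The same identity with $n=1$ gives multiplicativity, since every $i\ne0$ term dies in the quotient --- so this one identity is doing all the work in your "homomorphism" step as well.

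There is also a smaller circularity in your easy direction. To show $\wtO_\gamma(V)\otimes W\subset\wtO_\gamma(V\otimes W)$ you invoke the ideal property \cref{thm:wtA:wtO} \ref{i:thm:wtO:ideal} to handle $c\otimes\vac$ with $c\in\wtO_\gamma(V)$, but the ideal property of $\wtO_\gamma(V\otimes W)$ can only be applied once you already know $c\otimes\vac\in\wtO_\gamma(V\otimes W)$, which is the very point at issue. What is actually needed (and suffices, making the translation-operator remark superfluous) is the direct one-line computation $(a\otimes\vac)\Hop{\gamma}{2}(a'\otimes b')=(a\Hop{\gamma}{2}a')\otimes b'$, using $(\vac_W)_{(m)}=\delta_{m,-1}\id_W$, and its mirror image for the other factor. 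With these two repairs your outline coincides with the paper's proof; the remaining steps (linear isomorphism of quotients from the ideal equality, unit via \eqref{eq:wtA:Hop-vac}, specialization to $\gamma=c$) are correct as you state them.
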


\begin{proof}
We first show that, as subspaces of $V \otimes W$, we have
\begin{align}\label{eq:wtA:OVoW}
 \wtO_\gamma(V \otimes W) = \wtO_\gamma(V) \otimes W + V \otimes \wtO_\gamma(W).
\end{align}
The relation $\wtO_\gamma(V \otimes W) \supset \wtO_\gamma(V) \otimes W$ follows from 
\begin{align*}
 (a \otimes \vac) \Hop{\gamma}{2} (a' \otimes b')
&=\sum_{j\ge0}c(j,2)\gamma^j(a\otimes\vac_W)_{(-2+j)}(a'\otimes b') \\
&=\sum_{j\ge0}c(j,2)\gamma^j\sum_{k\in\bbZ}(a_{(k)}a')\otimes((\vac_W)_{(-3+j-k)}b') \\ 
&=\sum_{j\ge0}c(j,2)\gamma^j(a_{(j-2)}a')\otimes b' = (a\Hop{\gamma}{2}a')\otimes b 
 \in \wtO_\gamma(V) \otimes W
\end{align*}
for $a,a' \in V$ and $b'\in W$, 
where we used $c(j,2)$ defined in \eqref{eq:wtA:c(j,n)}, 
and used $(\vac_W)_{(n)}=\delta_{-1,n}\id_W$ in the second equality.
A similar argument shows $\wtO_\gamma(V \otimes W) \supset V \otimes \wtO_\gamma(W)$.
To show the other relation 
$\wtO_\gamma(V \otimes W) \subset \wtO_\gamma(V) \otimes W + V \otimes \wtO_\gamma(W)$,
note that, for any Laurent series $f(x)$ and $g(x)$ in $x$, we have 
\[
 \res_x[f(x)g(x)]dx = 
 \sum_{i \in \bbZ}\bigl(\res_x[x^{-i}f(x)]dx\bigr)\bigl(\res_x[x^{i-1}g(x)]dx\bigr).  
\]
Also, recall the expression \eqref{eq:wtA:blXZ} for the product $\Hop{\gamma}{2}$.
Then, for $a,a'\in V$ and $b,b'\in W$, we have
\begin{align}\label{eq:wtA:aa2bb}
\begin{split}
&(a\otimes a')\Hop{\gamma}{2}(b\otimes b')
 =\gamma \res_x[x^{-2}(a\otimes a')(\gamma^{-1}\log(1+x))(b\otimes b')]dx \\ 
&=\gamma \res_x[x^{-2}(a(\gamma^{-1}\log(1+x))b)\otimes(a'(\gamma^{-1}\log(1+x))b')]dx \\
&=\sum_{i\in\bbZ}
  \bigl(\gamma^{i+1}\res_x[x^{-i-2}a (\gamma^{-1}\log(1+x))b ]dx\bigr) \otimes 
  \bigl(\gamma^{ -i}\res_x[x^{ i-1}a'(\gamma^{-1}\log(1+x))b']dx\bigr) \\
&=\sum_{i\in\bbZ}(a\Hop{\gamma}{i+2}b)\otimes(a'\Hop{\gamma}{-i+1}b').
\end{split}
\end{align}
Note that, by \cref{thm:wtA:wtO} \ref{i:thm:wtO:ge2}, we have 
$(a\Hop{\gamma}{i+2}b)\otimes(a'\Hop{\gamma}{-i+1}b') \in \wtO_\gamma(V) \otimes W$
for $i\ge0$, and 
$(a\Hop{\gamma}{i+2}b)\otimes(a'\Hop{\gamma}{-i+1}b') \in V\otimes\wtO_\gamma(W)$
for $i<0$.
Hence we have 
$\wtO_\gamma(V \otimes W) \subset \wtO_\gamma(V) \otimes W + V \otimes \wtO_\gamma(W)$,
and \eqref{eq:wtA:OVoW} is proved.

Now, we have a well-defined $\bbC[\gamma]$-linear isomorphism
$\varphi\colon\wtA_\gamma(V \otimes W) \to \wtA_\gamma(V) \otimes \wtA_\gamma(W)$,  
$[a \otimes b] \mto [a] \otimes [b]$.
%\begin{align*}
%\wtA_\gamma(V \otimes W) = (V \otimes W)[\gamma]/\wtO_\gamma(V \otimes W) \lto 
%\wtA_\gamma(V) \otimes \wtA_\gamma(W) = 
%(V[\gamma]/\wtO_\gamma(V))\otimes (W[\gamma]/\wtO_\gamma(W))
%\end{align*}
To show that this is a superalgebra morphism, 
for $a,a'\in V$ and $b,b'\in W$, we compute as in \eqref{eq:wtA:aa2bb}:
\begin{align*}
 [(a\otimes a')\Hop{\gamma}{1}(b\otimes b')]
&=\sum_{i\in\bbZ}
  \bigl[\bigl(\gamma^{ i}\res_x[x^{-i-1}a (\gamma^{-1}\log(1+x))b ]dx\bigr) \otimes 
        \bigl(\gamma^{-i}\res_x[x^{ i-1}a'(\gamma^{-1}\log(1+x))b']dx\bigr)\bigr] \\
&=\sum_{i\in\bbZ}\bigl[(a\Hop{\gamma}{i+1}b)\otimes(a'\Hop{\gamma}{-i+1}b')\bigr] 
  \stackrel{\varphi}{\lmto} 
  \sum_{i\in\bbZ}\bigl[a\Hop{\gamma}{i+1}b\bigr]\otimes\bigl[a'\Hop{\gamma}{-i+1}b'\bigr]
 =\bigl[a\Hop{\gamma}{1}b\bigr]\otimes\bigl[a'\Hop{\gamma}{1}b'\bigr].
\end{align*}
In the last equality, we used \cref{thm:wtA:wtO} \ref{i:thm:wtO:ge2}.
Hence $\varphi$ is a $\bbC[\gamma]$-superalgebra isomorphism.
\end{proof}

\begin{rmk}
For the original Zhu algebra (see \cref{ss:wtA:A} below for the details) 
of vertex operator algebras $V$ and $W$, 
Milas \cite[Theorem 4.1]{Mi96} showed that $A(V\otimes W) \cong A(V) \otimes A(W)$.
Our proof of \cref{prp:wtA:wtA-otimes} is a modification of his argument.
\end{rmk}

\subsubsection{}

In \cref{s:N=124}, we will determine $\wtA(V)$ 
for $N\leq 4$ superconformal vertex algebras $V$.
As its preliminary, we recall the $\lambda$-bracket and the related notions briefly.
See \cite[\S2.3]{K}, \cite[\S1.5]{DK} and \cite[\S2]{HK} for details.

Let $(V,Y,\vac,\pd)$ be a vertex algebra.
\begin{itemize}
\item
Let $\lambda$ be an indeterminate.
The \emph{$\lambda$-bracket} $[a_\lambda b] \in V[\lambda]$ of $a,b\in V$ is defined as
\begin{align}\label{eq:wtA:lam-br}
 [a_\lambda b] \ceq \sum_{n\ge0}\frac{1}{n!}\lambda^n a_{(n)}b.
\end{align}
Note that this is indeed a polynomial of $\lambda$ by the field condition \eqref{eq:VA:f}.

%Using the $\lambda$-bracket, let us also recall:
\item 
An even element $L \in V^{\ev}$ is called \emph{conformal} 
with \emph{central charge} $c \in \bbC$ if 
\begin{align}\label{eq:wtA:L}
 [L_\lambda L]=(\pd+2\lambda)L+\frac{c}{12}\lambda^3.
\end{align}
We will use the standard notation
\begin{align}\label{eq:wtA:L(z)}
 L(z) = \sum_{n\in\bbZ}z^{-n-2}L_n, \quad L_n=L_{(n+1)}\in\End V.
\end{align} 

\item
Given a conformal element $L \in V$, 
an element $a \in V$ is said to have \emph{conformal weight} $\Delta \in \bbC$ if 
\begin{align}\label{eq:wtA:cw}
 [L_\lambda a]=(\pd+\Delta \lambda)a+O(\lambda^2).
\end{align}
An element $a \in V$ is called \emph{primary} of conformal weight $\Delta \in \bbC$ if
\begin{align}\label{eq:wtA:primary}
 [L_\lambda a]=(\pd+\Delta \lambda)a.
\end{align}
\end{itemize}

The following statements are standard for the original Zhu algebra $\Zhu_H(V)$ \cite[\S2.1]{Zhu}
(see also \cref{ss:wtA:A}).

\begin{prp}\label[prp]{prp:wtA:LinZ}
Let $V$ be a vertex algebra, and $L \in V$ be a conformal element.
\begin{enumerate}
\item 
If $a \in V$ has conformal weight $\Delta\in\bbC$, 
then $[L]\bl[a]-[a]\bl[L]=0$ in $\wtA(V)$.
\item
If we have a decomposition $V=\bigoplus_{\Delta\in\bbC}V_\Delta$ with $V_\Delta$ consisting of 
elements of conformal weight $\Delta$, then $[L]$ is a central element of $\wtA(V)$.
\end{enumerate}
\end{prp}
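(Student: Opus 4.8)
The plan is to derive both parts directly from the super-commutator formula \eqref{eq:wtA:a0b} and the vanishing \eqref{eq:wtA:pd=0}, both furnished by \cref{cor:wtA:pd=0}. The key observation is that a conformal element $L$ is even, so any commutator involving $[L]$ reduces to an ordinary commutator, and the formula \eqref{eq:wtA:a0b} expresses that commutator as $[L_{(0)}a]$, which I then identify and kill using the weight condition.

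For part (1), I would apply \eqref{eq:wtA:a0b} with first argument $L$ and second argument $a$. Since $L \in V^{\ev}$, we have $p(L,a)=1$, so the super-commutator coincides with the ordinary commutator appearing in the statement, and \eqref{eq:wtA:a0b} gives $[L]\bl[a]-[a]\bl[L]=[L_{(0)}a]$ in $\wtA(V)$. Next I would identify $L_{(0)}a$ from the conformal weight hypothesis: the $\lambda$-bracket \eqref{eq:wtA:lam-br} has $L_{(0)}a$ as its constant term in $\lambda$, and the condition \eqref{eq:wtA:cw} that $a$ has conformal weight $\Delta$ reads $[L_\lambda a]=(\pd+\Delta\lambda)a+O(\lambda^2)$, whose $\lambda^0$-coefficient is $\pd a$. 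Hence $L_{(0)}a=\pd a$, and \eqref{eq:wtA:pd=0} gives $[L_{(0)}a]=[\pd a]=0$, which proves $[L]\bl[a]-[a]\bl[L]=0$.

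For part (2), I would upgrade the pointwise commutation of part (1) to centrality by a spanning argument. Under the hypothesis $V=\bigoplus_{\Delta\in\bbC}V_\Delta$ with each $V_\Delta$ consisting of elements of conformal weight $\Delta$, every $a\in V$ is a finite sum of elements each carrying a conformal weight, to which part (1) applies. By $\bbC$-linearity of the product $\bl$ and of the quotient map $a\mapsto[a]$, the element $[L]$ then commutes with $[a]$ for every $a\in V$; since $\wtA(V)$ is spanned by such images $[a]$, the element $[L]$ commutes with all of $\wtA(V)$. Finally, because $L$ is even, ordinary commutation is the same as super-commutation, so $[L]$ lies in the (super)center.

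There is no genuinely hard step here; the argument is a two-line reduction once \cref{cor:wtA:pd=0} is in hand. The only points requiring care are the parity bookkeeping — namely that $L$ being even collapses the super-commutator in \eqref{eq:wtA:a0b} to the ordinary commutator of the statement — and the extraction of $L_{(0)}a=\pd a$ from the weight-$\Delta$ condition \eqref{eq:wtA:cw}, where it matters that only the $O(\lambda^2)$ tail is discarded so that the $\lambda^0$-term is exactly $\pd a$ rather than a nonzero multiple of $a$.
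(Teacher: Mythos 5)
Your proof is correct and follows exactly the paper's own argument: apply \eqref{eq:wtA:a0b} (noting $p(L,a)=1$ since $L$ is even), extract $L_{(0)}a=\pd a$ as the $\lambda^0$-coefficient of the weight condition \eqref{eq:wtA:cw}, and kill it with \eqref{eq:wtA:pd=0}, with part (2) following by linearity. The paper's proof is a terser version of the same reduction, so there is nothing to add.
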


\begin{proof}
(1) By \eqref{eq:wtA:a0b} and the assumption,
we have $[L]\bl[a]-[a]\bl[L]=[L_{(0)}a]=[\pd a]$.
Then, by \eqref{eq:wtA:pd=0}, it is equal to $0$.
(2) follows from (1).
\end{proof}

%%%%%%%%%%%%%%%%%%%%%%%%%%%%%%%%%%%%%%%%%%%%%%%%%%%%%%%%%%%%%%%%%%%%%%%%%%%%%%%%%%%%%%%%%%
\subsection{Relationship with the original Zhu algebra}\label{ss:wtA:A}

Here we explain the relationship between Huang's algebra $\wtA(V)$
and Zhu's original algebra \cite[\S2]{Zhu}.
For the original algebra, we present the generalized version $\Zhu_H(V)$ 
introduced by De Sole and Kac \cite{DK}.

\subsubsection{}

Let $V$ be a vertex algebra.
\begin{itemize}
\item 
A \emph{Hamiltonian} $H$ of $V$ is an even semisimple linear operator 
$H\in(\End V)^{\ev}$ such that 
\begin{align}\label{eq:wtA:H}
 [H,Y(a,z)] = z\pd_z Y(a,z)+Y(Ha,z) 
\end{align}
for any $a \in V$.
We denote the $H$-eigenspace of the eigenvalue $\Delta\in\bbC$ by $V_\Delta$.

\item
For $a \in V_\Delta$, $b\in V$ and $m \in \bbZ$, we define 
\begin{align*}
 a \Zop{H}{m} b \ceq \sum_{j\ge0}\binom{\Delta}{j}a_{(j+m)}b \in V, 
\end{align*}
and extend the operation $\Zop{H}{m}$ by linearity.
Note that is $a \Zop{H}{m} b$ a finite sum by the field condition \eqref{eq:VA:f}.

\item
We define the quotient space $\Zhu_H(V)$ of $V$ by 
\begin{align*}
 \Zhu_H(V) \ceq V/O_H(V), \quad O_H(V) \ceq \Span_{\bbC}\{a\Zop{H}{-2}b \mid a,b \in V\}.
\end{align*}
We denote the equivalence class of $a \in V$ by $[a] \in \Zhu_H(V)$.
\end{itemize}

\begin{eg}\label[eg]{eg:wtA:CVA}
We give a standard example of Hamiltonian.
Assume that a vertex algebra $(V,Y,\vac,\pd)$ is equipped with a conformal element 
$L$ \eqref{eq:wtA:L} such that $L_0=L_{(1)}\in\End(V)$ is semisimple,
and that $L_{-1}=L_{(0)}\in\End(V)$ is equal to the translation operator $\pd$.
Then $L_0$ is a Hamiltonian of $V$, and we have the quotient space $\Zhu_{L_0}(V)$.
\end{eg}

\begin{fct}[{\cite[Theorem 2.1.1]{Zhu}, \cite[Theorem 2.7 (c)]{DK}}]\label[fct]{fct:A:A}
Let $V$ be a vertex algebra with Hamiltonian $H$.
Then, the even linear map $\Zop{H}{-1}\colon V \otimes V \to V$ 
induces a well-defined even linear map
\begin{align*}
 *\colon \Zhu_H(V) \otimes \Zhu_H(V) \lto \Zhu_H(V),
\end{align*}
and the triple $(\Zhu_H(V),*,[\vac])$ is a unital associative $\bbC$-superalgebra.
Moreover, for $a,b \in V$, we have 
\begin{align}\label{eq:A:T+H-com}
 [(T+H)(a)] = 0, \quad [a]\Zop{H}{}[b]-p(a,b)[b]\Zop{H}{}[a] = [a_Hb] 
 \quad \text{in $\Zhu_H(V)$}, 
\end{align}
where $a_Hb \ceq \sum_{j\ge0}\binom{\Delta-1}{j}a_{(j)}b \in V$ for $a \in V_\Delta$
(and extend the operation ${\cdot}_H{\cdot}$ by linearity).
\end{fct}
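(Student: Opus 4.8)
The plan is to mirror, step by step, the proof of \cref{thm:wtA:wtO}, replacing the exponential kernel $f_n(z;\gamma)$ by the power kernel $k_n^{\Delta}(z)\ceq z^{-n}(1+z)^{\Delta}$ attached to an element of conformal weight $\Delta$. First I would record everything in residue form: for $a\in V_\Delta$ one has $a\Zop{H}{m}b=\res_z[z^{m}(1+z)^{\Delta}a(z)b]dz$, so that the algebra product is governed by $\Zop{H}{-1}$ and the relations by $O_H(V)=\Span\{a\Zop{H}{-2}b\}$. The unit property is then immediate from the vacuum axiom \eqref{eq:VA:vac}: since $\vac$ has weight $0$ and $Y(\vac,z)=\id_V$, we get $\vac\Zop{H}{-1}a=a$, while $a\Zop{H}{-1}\vac=\res_z[z^{-1}(1+z)^{\Delta}e^{z\pd}a]dz=a$, the constant term of $(1+z)^{\Delta}e^{z\pd}a$. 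Thus $[\vac]$ is a two-sided unit on the nose.

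The structural input replacing \eqref{eq:wtA:pd-Hop} is a differential recursion obtained by integration by parts. Because $\pd a$ has weight $\Delta+1$ and $(\pd a)(z)=\pd_z a(z)$, and since $\pd_z\bigl(z^{-n}(1+z)^{\Delta+1}\bigr)=-n\,z^{-n-1}(1+z)^{\Delta}+(\Delta+1-n)\,z^{-n}(1+z)^{\Delta}$, one finds $(\pd a)\Zop{H}{-n}b=n\,a\Zop{H}{-n-1}b-(\Delta+1-n)\,a\Zop{H}{-n}b$. For $n\ge2$ this lets me solve for $a\Zop{H}{-n-1}b$ in terms of products of the $(-n)$-type, giving the descending chain $V\Zop{H}{-n-1}V\subset V\Zop{H}{-n}V$ and hence the analogue of \cref{thm:wtA:wtO}\ref{i:thm:wtO:ge2}; in particular $O_H(V)$ absorbs all $a\Zop{H}{-n}b$ with $n\ge2$. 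Specialising the definition at $b=\vac$ gives $a\Zop{H}{-2}\vac=(\pd+\Delta)a=(T+H)a$ (with $T=\pd$), so $(T+H)V\subset O_H(V)$ and the first identity in \eqref{eq:A:T+H-com} follows, exactly as in \cref{thm:wtA:wtO}\ref{i:thm:wtO:pd}.

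With these in hand, the two-sided ideal property and the associativity of $\Zop{H}{-1}$ modulo $O_H(V)$ are proved by repeating the residue manipulations of \cref{thm:wtA:wtO}\ref{i:thm:wtO:ideal},\ref{i:thm:wtO:assoc}: the left-ideal statement $V\Zop{H}{-1}O_H(V)\subset O_H(V)$ uses the commutator formula \eqref{eq:VA:com-res}, and the right-ideal statement $O_H(V)\Zop{H}{-1}V\subset O_H(V)$ uses the Jacobi identity \eqref{eq:VA:Jac-res}, now with $k_n^{\Delta}(x-w)$ in place of $f_n(x-w)$. Setting $n=1$ in the latter computation yields associativity, and hence $\Zop{H}{-1}$ descends to a well-defined unital associative product $*$ on $\Zhu_H(V)$. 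The commutator identity, the second formula in \eqref{eq:A:T+H-com}, comes from skew-symmetry \eqref{eq:VA:skew} precisely as in \cref{thm:wtA:wtO}\ref{i:thm:wtO:(0)}, the weight shift $\Delta\mapsto\Delta-1$ arising from the change of variable $w\ceq-z$ and producing the operation $a_Hb=\res_z[(1+z)^{\Delta-1}a(z)b]dz$.

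The main obstacle is the right-ideal computation. As in the passage \eqref{eq:wtA:n1-1}--\eqref{eq:wtA:n1-2}, one must expand the transported kernel $(x-w)^{-n}(1+x-w)^{\Delta}$ into convergent series in the two annular regions and verify that every resulting summand lands in some $V\Zop{H}{-m}V$ with $m\ge2$; the bookkeeping is delicate because each $a_{(k)}b$ carries weight $\Delta_a+\Delta_b-k-1$ while the kernel only records the weight of its first argument, so the exponents must be matched carefully against the definition of $\Zop{H}{-m}$. Once this matching is in place the argument closes. Since the statement is classical, I would in the end simply invoke \cite[Theorem 2.1.1]{Zhu} and \cite[Theorem 2.7]{DK}, pointing out that their proofs run parallel to \cref{thm:wtA:wtO} under the substitution of the power kernel for the exponential one.
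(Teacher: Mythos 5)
The paper does not actually prove this Fact: it is quoted verbatim from \cite[Theorem 2.1.1]{Zhu} and \cite[Theorem 2.7]{DK}, so the only in-paper argument your sketch can be measured against is the parallel proof of \cref{thm:wtA:wtO}, which is indeed the template you follow, and your closing move of invoking the citations matches the paper's own treatment exactly. Most of your reconstruction checks out: the residue form $a\Zop{H}{m}b=\res_z[z^{m}(1+z)^{\Delta}a(z)b]dz$ is correct; the unit computations are correct (the constant term of $(1+z)^{\Delta}e^{z\pd}a$ is $a$, and $H\vac=0$ forces $\vac\Zop{H}{-1}a=a$); the integration-by-parts recursion $(\pd a)\Zop{H}{-n}b=n\,a\Zop{H}{-n-1}b-(\Delta+1-n)\,a\Zop{H}{-n}b$ is correct (including the use of weight $\Delta+1$ for $\pd a$), and for $n\ge2$ it does yield $V\Zop{H}{-n-1}V\subset\Span_{\bbC}(V\Zop{H}{-n}V)$, hence the analogue of \cref{thm:wtA:wtO} \ref{i:thm:wtO:ge2}; and $a\Zop{H}{-2}\vac=a_{(-2)}\vac+\Delta a_{(-1)}\vac=(T+H)a$ gives the first identity of \eqref{eq:A:T+H-com}. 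Your plan for the ideal and associativity statements via \eqref{eq:VA:com-res} and \eqref{eq:VA:Jac-res} is the standard route, and you correctly flag where the real bookkeeping lies in the right-ideal computation (matching the kernel exponent in $w$ against the weight of $a_{(k)}b$, which works out via factorizations of the type $(1+x)=(1+w)\bigl(1+\tfrac{x-w}{1+w}\bigr)$).

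There is, however, one step that would fail if carried out literally as you describe it: you say the commutator identity ``comes from skew-symmetry \eqref{eq:VA:skew} precisely as in \cref{thm:wtA:wtO} \ref{i:thm:wtO:(0)}, the weight shift $\Delta\mapsto\Delta-1$ arising from the change of variable $w\ceq-z$.'' In the proof of \cref{thm:wtA:wtO} \ref{i:thm:wtO:(0)}, the factor $e^{z\pd}$ produced by skew-symmetry is simply discarded, because $\pd V\subset\wtO_\gamma(V)$ by \cref{thm:wtA:wtO} \ref{i:thm:wtO:pd}. In $\Zhu_H(V)$ one only has $(T+H)V\subset O_H(V)$, so for homogeneous $c\in V_{\Delta_c}$ one gets $[\pd c]=-\Delta_c[c]\ne0$, and more generally $[\pd^nc]=(-1)^n\Delta_c(\Delta_c+1)\cdots(\Delta_c+n-1)[c]$, which resums to $[e^{z\pd}c]=(1+z)^{-\Delta_c}[c]$. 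The shift $\Delta\mapsto\Delta-1$ in $a_Hb=\sum_{j\ge0}\binom{\Delta-1}{j}a_{(j)}b$ is produced by this resummation of $e^{z\pd}$ into a $(1+z)^{-\mathrm{weight}}$ kernel (applied componentwise to $b_{(n)}a$, whose weight depends on $n$) combined with the substitution, not by the change of variable alone: by itself, $w\ceq-z$ turns the kernel $z^{-1}(1+z)^{\Delta}$ into $w^{-1}(1-w)^{\Delta}$, whose alternating binomial coefficients do not match $a_Hb$. This extra mechanism is precisely the ``more complicated form'' of \eqref{eq:A:T+H-com} that the remark following \cref{cor:wtA:pd=0} cites as the reason for preferring $\wtA(V)$, so your sketch glosses over the very point where the two settings diverge; since you ultimately defer to \cite{Zhu} and \cite{DK}, the statement is safe, but that step of the sketch is not self-contained as written.
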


To state the relationship between $\Zhu_H(V)$ and $\wtA_\gamma(V)$,
let us assume that $V$ is a vertex operator algebra.
In other words, $V$ is a vertex algebra equipped with a conformal element $L$ 
that satisfies the conditions in \cref{eg:wtA:CVA}, and furthermore, 
the eigenvalues $\Delta$ of $L_0$ lie in $\bbN=\bbZ_{\ge0}$. 
In particular, we have the $L_0$-eigenspace decomposition 
\begin{align*}
 V = \bigoplus_{\Delta \in \bbN} V_\Delta, \quad V_\Delta \ceq \{v \in V \mid L_0 v = \Delta v\}.
\end{align*}
Then, for $\gamma_0\in\bbC\setminus\{0\}$, we define 
\begin{align}\label{eq:Zhu:U}
 U_{\gamma_0}(x) \ceq (\gamma_0 x)^{L_0} \exp\bigl(-\tsum_{j>0}c_j L_j\bigr),
\end{align}
where $L_j \in \End V$ are given by $L(z)=\sum_{j \in \bbZ}z^{-j-2}L_j$ \eqref{eq:wtA:L(z)}, 
and $c_j \in \bbC$ are given by the relation 
\begin{align*}
 \gamma_0^{-1} \log(1+\gamma_0 y) = \exp\bigl(\tsum_{j>0}c_j y^{j+1}\pd_y\bigr)y.
\end{align*}
Explicitly, $c_1 = -\frac{1}{2}\gamma_0$, $c_2=\frac{1}{12}\gamma_0^2$, 
$c_3=-\frac{1}{48}\gamma_0^3$ and so on.
$U_{\gamma_0}(x)$ is an invertible operator on $V$.

\begin{fct}[{\cite[Proposition 6.3]{H}}]\label[fct]{fct:Zhu:wtA=Zhu}
Let $V$ be a vertex operator algebra with conformal vector $L$,
and $\gamma_0\in\bbC\setminus\{0\}$. 
Then, the operator $U \ceq U_{\gamma_0}(1)$ in \eqref{eq:Zhu:U} satisfies
\begin{align*}
 U(a\Hop{\gamma=\gamma_0}{n}{b}) = 
 U(a)\Zop{L_0}{-n}U(b) \quad (a,b \in V, \, n \in \bbZ_{\ge1}), \quad
 U(\wtO(V)) = O(V).
\end{align*}
Moreover, 
the unital associative algebras $(\wtA_{\gamma=\gamma_0}(V),\Hop{\gamma=\gamma_0}{},[\vac])$ 
and $(\Zhu_{L_0}(V),\Zop{L_0}{},[\vac])$ 
are isomorphic under the linear map 
\begin{align*}
 \wtA_{\gamma=\gamma_0}(V) \lto \Zhu_{L_0}(V), \quad [a] \lmto [U_{\gamma_0}(1)a]
\end{align*}
%Then the map $U\colon V \to V$ in \eqref{eq:Zhu:U} satisfies 
%$U\bigl(\wtO_\gamma(V)\bigr)=O(V)$,
%and induces an isomorphism of unital associative algebras
%\begin{align*}
% \wtA_\gamma(V) \lsto A(V). 
%\end{align*}
\end{fct}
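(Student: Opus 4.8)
The plan is to identify $U=U_{\gamma_0}(1)$ with the operator implementing the change of coordinates $x=e^{\gamma_0 z}-1$ of \eqref{eq:wtA:f}, to deduce the product formula $U(a\Hop{\gamma_0}{n}b)=Ua\Zop{L_0}{-n}Ub$ from the transformation law of vertex operators under this substitution, and then to read off the algebra isomorphism formally from the case $n=1$.

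First I would rewrite both operations as residues in the single ``sphere'' variable $x$. On one side, \eqref{eq:wtA:blXZ} gives $a\Hop{\gamma_0}{n}b=\gamma_0^{n-1}\res_x[x^{-n}Y(a,g(x))b]\,dx$ with $g(x)=\gamma_0^{-1}\log(1+x)$. On the other side, unwinding the definition of $\Zop{L_0}{-n}$ and letting $(1+x)^{L_0}$ act through the weight decomposition yields $\alpha\Zop{L_0}{-n}\beta=\res_x[x^{-n}Y((1+x)^{L_0}\alpha,x)\beta]\,dx$ for arbitrary (not necessarily homogeneous) $\alpha$. Since $\res_x[x^{-n}\Phi(x)]\,dx$ is the coefficient of $x^{n-1}$ in $\Phi$, and since $U$ commutes with $\res_x$, the desired formula for all $n\ge1$ is equivalent to the equality of the nonnegative powers of $x$ in $U\,Y(a,g(\gamma_0 x))b$ and in $Y((1+x)^{L_0}Ua,x)Ub$. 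Writing $G(x)\ceq\gamma_0^{-1}\log(1+\gamma_0 x)$, we have $g(\gamma_0 x)=G(x)$, and, letting $b$ vary, this reduces to the operator identity
\[
 U\,Y(a,G(x))\,U^{-1}=Y\bigl((1+x)^{L_0}\,Ua,\ x\bigr).
\]

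The heart of the matter is this conjugation formula, which I would obtain from the geometric change-of-coordinates theory for vertex operators (following Huang \cite{H}; see also the formalism of \cite{FB}). By construction, the numbers $c_j$ in \eqref{eq:Zhu:U} are exactly the coefficients expressing $G$ as $\exp(\sum_{j>0}c_j y^{j+1}\pd_y)y$, so that $R\ceq\exp(-\sum_{j>0}c_j L_j)$ is the operator associated to the coordinate change $G$, which fixes $0$ with $G'(0)=1$; the remaining prefactor $\gamma_0^{L_0}$ of $U$ then rescales the insertion variable. Checking that the Jacobian produced by $R$, combined with the $\gamma_0^{L_0}$-rescaling, collapses precisely to the weight factor $(1+x)^{L_0}$ (and that all powers of $\gamma_0$, including the scalar $\gamma_0^{n-1}$, cancel) is the main obstacle: it requires the precise transformation law of $Y(a,\cdot)$ under $R$, including the re-expansion of $G$ about the insertion point, together with careful bookkeeping of the $L_0$-conjugation. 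The case $a=\vac$, where both sides collapse to the identity, and the relation $U\vac=\vac$ (which follows from $L_j\vac=0$ for all $j\ge0$) serve as consistency checks.

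Granting the product formula for all $n\ge1$, the remaining assertions are formal. Taking $n=2$ and using that $U$ is an invertible linear endomorphism of $V$, the span $\wtO_{\gamma=\gamma_0}(V)=\Span_{\bbC}\{a\Hop{\gamma_0}{2}b\}$ is carried bijectively onto $\Span_{\bbC}\{Ua\Zop{L_0}{-2}Ub\}=O(V)$, giving $U(\wtO_{\gamma=\gamma_0}(V))=O(V)$. Hence $U$ descends to a linear isomorphism $\wtA_{\gamma=\gamma_0}(V)\lsto\Zhu_{L_0}(V)$, $[a]\mapsto[Ua]$. The case $n=1$ shows $[U(a\Hop{\gamma_0}{1}b)]=[Ua\Zop{L_0}{-1}Ub]$, i.e.\ that this map intertwines the two products, while $U\vac=\vac$ shows it preserves the unit. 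Therefore it is an isomorphism of unital associative superalgebras, as claimed.
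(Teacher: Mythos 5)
Your overall route is the same as the paper's: the paper treats this statement as a cited Fact and reduces everything to the change-of-coordinates conjugation formula of \cite[Proposition 1.2]{Hu} (see also \cite[6.5.6]{FB}),
\[
 U_{\gamma_0}(1)\,a(x)\,U_{\gamma_0}(1)^{-1}
 =\bigl(U_{\gamma_0}(e^{\gamma_0 x})a\bigr)(e^{\gamma_0 x}-1),
\]
which is exactly the identity you isolate as ``the heart of the matter''; like the paper, you leave its verification to the coordinate-change theory. Your formal deductions from it (rewriting both products as residues in the sphere variable, $U(\wtO_{\gamma=\gamma_0}(V))=O(V)$ from invertibility of $U$, the $n=1$ case giving the algebra morphism, $U\vac=\vac$ for the unit) are all sound and match the intended argument.

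However, the precise conjugation identity you wrote down is not the one this theory yields. Specializing the displayed formula at $x=G(u)=\gamma_0^{-1}\log(1+\gamma_0 u)$, so that $e^{\gamma_0 x}=1+\gamma_0 u$, and using $U_{\gamma_0}(1+\gamma_0 u)=(1+\gamma_0 u)^{L_0}\,U_{\gamma_0}(1)$, one gets $U\,Y(a,G(u))\,U^{-1}=Y\bigl((1+\gamma_0 u)^{L_0}Ua,\ \gamma_0 u\bigr)$, i.e.\ your right-hand side with $u$ replaced by $\gamma_0 u$; your version is correct only at $\gamma_0=1$. Feeding the correct identity into your residue computation, the powers of $\gamma_0$ do \emph{not} all cancel: with this paper's normalization \eqref{eq:wtA:bl}, which rescales Huang's operation by $\gamma^{n-1}$, one obtains $U(a\Hop{\gamma_0}{n}b)=\gamma_0^{\,n-1}\,(Ua)\Zop{L_0}{-n}(Ub)$. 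A quick test: for a primary $J$ of weight $1$ one has $J\Hop{\gamma_0}{2}\vac=\pd J$ and $U(\pd J)=\gamma_0^2(\pd J+J)$, while $(UJ)\Zop{L_0}{-2}(U\vac)=\gamma_0(\pd J+J)$. So the ``careful bookkeeping'' you defer would in fact refute your exact operator identity for $\gamma_0\neq1$ and $n\ge2$ --- and, incidentally, it shows the displayed identity in the Fact itself should carry the factor $\gamma_0^{n-1}$ (it is Huang's identity for his unrescaled operation). The damage to the conclusions is nil: $n=1$ is exact, and for $n\ge2$ the nonzero scalar does not change the relevant spans, so $U(\wtO_{\gamma=\gamma_0}(V))=O(V)$ and the algebra isomorphism survive; but as written your intermediate conjugation formula is false for general $\gamma_0$, and you should either restrict to $\gamma_0=1$ or insert the rescaling $u\mapsto\gamma_0 u$ throughout.
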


The proof is based on the coordinate-change 
formula \cite[Proposition 1.2]{Hu}, \cite[6.5.6]{FB}:
\begin{align*}
 U_{\gamma_0}(1) \cdot a(x) \cdot U_{\gamma_0}(1)^{-1} = 
 \bigl(U_{\gamma_0}(e^{\gamma_0 x})a\bigr)(e^{\gamma_0 x}-1).
\end{align*}
See also \cite[Remark 3.13]{vEH} for a comment on this isomorphism.

\begin{rmk}
As a corollary of \cref{fct:Zhu:wtA=Zhu}, 
for a vertex operator algebra $V$, 
we have $\wt{A}_{\gamma=\gamma_0}(V) \cong \wt{A}_{\gamma=\gamma_1}(V)$ 
for any $\gamma_0,\gamma_1 \in \bbC\setminus\{0\}$.
\end{rmk}

\begin{rmk}\label{rmk:1:AVk=U}
By \cite[\S3]{FZ}, for the universal affine vertex algebra $V^k(\frg)$ 
of a semisimple Lie algebra $\frg$ with level $k\ne-h^\vee$, 
we have $\Zhu_{L_0}(V^k(\frg)) \cong U(\frg)$,
where $L_0$ is the zero-mode of the Sugawara field $L(z)$.
Thus, we have $\wt{A}(V^k(\frg)) \cong U(\frg)$ for $k\ne-h^\vee$.
Even without knowing this fact, 
one can prove the isomorphism $\wt{A}(V^k(\frg)) \cong U(\frg)$ 
for any level $k$ by using the argument explained in \cref{s:N=124}.
\end{rmk}

%%%%%%%%%%%%%%%%%%%%%%%%%%%%%%%%%%%%%%%%%%%%%%%%%%%%%%%%%%%%%%%%%%%%%%%%%%%%%%%%%%%%%%%%%%
%%%%%%%%%%%%%%%%%%%%%%%%%%%%%%%%%%%%%%%%%%%%%%%%%%%%%%%%%%%%%%%%%%%%%%%%%%%%%%%%%%%%%%%%%%
\section{Zhu algebra of superconformal algebras}\label{s:N=124}

Here we compute the Zhu algebra $\wtA(V)$
for superconformal vertex algebras in a direct way.

We will freely use the language of Lie conformal algebras (or vertex Lie algebras) 
and the enveloping vertex algebra of a Lie conformal algebra.
See \cite[\S2.7, \S4.7]{K}, \cite[\S1.5]{DK}, \cite[Chap.\ 16]{FB} 
and \cite[\S2.4]{HK} for details.
%In particular, we use the following terminology from \cite[\S2.4]{HK}.
We list some terminology and notations.
\begin{itemize}
\item 
A \emph{Lie conformal algebra} $\clL$ is a $\bbC[\pd]$-module equipped with an even bilinear map
$[\cdot_\lambda \cdot]\colon \clL \otimes \clL \to \bbC[\lambda] \otimes \clL$ 
satisfying some axioms %the sesqui-linearity, the skew-commutativity, and the Jacobi identity
\cite[Definition 2.2]{HK}.

\item
Given a Lie conformal algebra $\clL$, one can associate to it 
a vertex algebra $V(\clL)$ called the \emph{enveloping vertex algebra} 
of $\clL$ \cite[\S4.7]{K}, \cite[\S16.11]{FB}.

\item
Let $\clL$ be a Lie conformal algebra, 
$C \in \clL$ be a central element such that $\pd C=0$, 
and $c \in \bbC$.
We denote by $V_c(\clL)$ the quotient $V(\clL)/(C-c\vac)_{(-1)}V(\clL)$.
It has a natural vertex algebra structure.
\end{itemize}

As stated in \cref{ss:intro:main}, the vertex algebras we will study are 
$N=1,2,3,4$ and big $N=4$ superconformal vertex algebras.
As mentioned in \cref{ss:intro:finW}, these are known to be 
the affine $W$-algebras $W^k(\frg,f)$ associated with minimal nilpotent elements 
$f=f_{\tmin}$ (see \cite[\S\S8.2--8.6]{KW} for the details).
\begin{align*}
 &V^{N=1}_c \cong W^k(\osp(1|2),f), &  & c=\frac{3}{2}-\frac{12(k+1)^2}{2k+3},  \\
 &V^{N=2}_c \cong W^k(\fsl(1|2),f), & & c=-3(2k+1), \\
 &V^{N=4}_c \cong W^k(\psl(2|2),f), & & c=-6(k+1), \\
 &V^{N=3}_c \cong W^k(\osp(3|2),f) \otimes F^k(\bbC), & & c=-6k-3,\\ 
 &V^{N=4,\tbig}_{c,a} \cong W^k(D(2,1;a),f) \otimes B^k(\bbC\xi) \otimes F^k(\bbC^4),
 & & c=-6k, \ a \in \bbC\setminus\{0,-1\}.
\end{align*}
Here $F^k(\bbC)$ and $F^k(\bbC^4)$ are the fermionic vertex algebras,  
and $B^k(\bbC\xi)$ is the bosonic vertex algebra, as explained in \cref{ss:intro:finW}.

Note that $\fsl(1|2)$ and $D(2,1;1)$ are isomorphic to $\osp(2|2)$ and $\osp(4|2)$,
respectively, and that an appropriate degeneration of $D(2,1;a)$ as $a\to-1$ is 
isomorphic to $\psl(2|2)\rtimes\fsl(2)$ \cite[Theorem 4.2 (2)]{IG}. 
Hence, all the Lie superalgebras appearing above are subalgebras of 
$D(2,1;a)$ or its degeneration.

\subsection{\texorpdfstring{$N=1,2,4$}{N=1,2,4} superconformal algebras}\label{ss:N=4}

Let $V^{N=4}$ be the enveloping vertex algebra of the (small) 
$N=4$ superconformal algebra \cite[\S5.9]{K}, \cite[\S8.4]{KW}.
It is strongly generated by eight elements 
\begin{itemize}
\item 
a conformal element $L$ of central charge $c$ \eqref{eq:wtA:L},
\item
three even primary elements $J^0,J^+,J^-$ of conformal weight $1$ \eqref{eq:wtA:primary},
\item
and four odd primary elements $G^+,G^-,\ol{G}^+,\ol{G}^-$ of conformal weight $3/2$.
\end{itemize}
The remaining non-zero $\lambda$-brackets (see \eqref{eq:wtA:lam-br}) are 
\begin{gather*}
 [{J^0}_\lambda J^{\pm}] = \pm2 J^{\pm}, \quad 
 [{J^0}_\lambda J^0] = \frac{c}{3}\lambda, \quad 
 [{J^+}_\lambda J^-] = J^0+\frac{c}{6}\lambda, \quad
 [{J^0}_\lambda G^{\pm}] = \pm G^{\pm}, \quad 
 [{J^0}_\lambda \ol{G}^{\pm}] = \pm \ol{G}^{\pm}, \\
 [{J^\pm}_\lambda G^\mp] = G^\pm, \quad 
 [{J^\pm}_\lambda \ol{G}^\mp] = -\ol{G}^\pm, \quad
 [{G^{\pm}}_\lambda \ol{G}^{\pm}]=(\pd+2\lambda)J^{\pm}, \quad 
 [{G^{\pm}}_\lambda \ol{G}^{\mp}]=L\pm\frac{1}{2}(\pd+2\lambda)J^0+\frac{c}{6}\lambda^2.
\end{gather*}
%Note that $V^{N=2}$ is isomorphic to a vertex subalgebra of $V^{N=4}$ 
%strongly generated by $L,J^0,G^+,\ol{G}^-$. 
Using the expansion $L(z)=\sum_{n\in\bbZ}z^{-n-2}L_n$, 
we have the $L_0$-eigenspace decomposition 
\begin{align}\label{eq:N=4:VD}
 V^{N=4}=\bigoplus_{\Delta\in\hf\bbN}V_\Delta, \quad 
 V_\Delta\ceq\{v \in V^{N=4} \mid L_0v=\Delta v\}
\end{align}

%Similarly to the $N=1$ and $N=2$ cases, 
Now we study the Zhu algebra
\[
 \wtA(V^{N=4})=V^{N=4}/\wtO(V^{N=4}).
\]
It is generated by the classes $[L]$, $[G^{\pm}]$, $[\ol{G}^{\pm}]$ and $[J^\alpha]$.
By the decomposition \eqref{eq:N=4:VD}, 
we can apply \cref{prp:wtA:LinZ} to $\wtA$, and hence
\begin{align}\label{eq:N=4:LinZ}
 \text{$[L]$ is in the center of $\wtA(V^{N=4})$.}
\end{align}
Also, by \cref{cor:wtA:pd=0} and \eqref{eq:wtA:lam-br}, 
the other non-trivial commutation relations of the generators in $\wtA$ are given as follows:
\begin{gather}\label{eq:N=4:Arels}
\begin{split}
 [[J^0],[J^{\pm}]] = \pm2 [J^{\pm}], \quad 
 [[J^+],[J^-]] = [J^0], \quad
 [[J^0],[G^{\pm}]] = \pm [G^{\pm}], \quad 
 [[J^0],[\ol{G}^{\pm}]] = \pm [\ol{G}^{\pm}], \\
 [[J^\pm],[G^\mp]] = [G^\pm], \quad 
 [[J^\pm],[\ol{G}^\mp]] = -[\ol{G}^\pm], \quad
 [[G^{\pm}],[\ol{G}^{\pm}]]=0, \quad 
 [[G^{\pm}],[\ol{G}^{\mp}]]=[L].
\end{split}
\end{gather}

Now, let $\fsl(2|2)$ be the special linear Lie superalgebra,
and consider the quotient $\psl(2|2) \ceq \fsl(2|2)/\bbC I$
($I$ is the identity supermatrix).
We have $\dim\psl(2|2)=6|8$.
Using the supermatrix notation, we take a minimal nilpotent element $f=f_{\tmin} \in \psl(2|2)$ as
\begin{align*}
 f \ceq 
 \left[\begin{array}{@{}c@{\,}|@{\,}c@{}} O & O \\ \hline O & F \end{array}\right], \quad 
 F \ceq \begin{sbm} 0 & 0 \\ 1 & 0 \end{sbm}, \quad 
 O \ceq \begin{sbm} 0 & 0 \\ 0 & 0 \end{sbm}.
\end{align*}
To be precise, we should denote $f+\bbC I$, but we simplify the notation.
Then its centralizer in $\psl(2|2)$
\begin{align*}
 \psl(2|2)^f \ceq \{ x \in \psl(2|2) \mid [x,f]=0 \}.
\end{align*}
has a linear basis consisting of 
\begin{gather}\label{eq:N=4:basis}
 f, \quad 
 j^0 \ceq \left[\begin{array}{@{}c@{\,}|@{\,}c@{}}
                H & O \\ \hline O & O \end{array}\right], \quad 
 j^+ \ceq \left[\begin{array}{@{}c@{\,}|@{\,}c@{}}
                E & O \\ \hline O & O \end{array}\right], \quad 
 j^- \ceq \left[\begin{array}{@{}c@{\,}|@{\,}c@{}}
                F & O \\ \hline O & O \end{array}\right], \\ 
 g^+ \ceq \left[\begin{array}{@{}c@{\ }c@{\,}|@{\,}c@{\ }c@{}}
                0&0&1&0 \\ 0&0&0&0 \\ \hline 0&0&0&0 \\ 0&0&0&0 \end{array}\right], \quad 
 g^- \ceq \left[\begin{array}{@{}c@{\ }c@{\ }|@{\,}c@{\ }c@{}} 
                0&0&0&0 \\ 0&0&1&0 \\ \hline 0&0&0&0 \\ 0&0&0&0 \end{array}\right], \quad 
 \ol{g}^+ \ceq \left[\begin{array}{@{}c@{\ }c@{\,}|@{\,}c@{\ }c@{}}
                0&0&0&0 \\ 0&0&0&0 \\ \hline 0&0&0&0 \\ 0&1&0&0 \end{array}\right], \quad 
 \ol{g}^- \ceq \left[\begin{array}{@{}c@{\ }c@{\,}|@{\,}c@{\ }c@{}}
                0&0&0&0 \\ 0&0&0&0 \\ \hline 0&0&0&0 \\ 1&0&0&0 \end{array}\right], 
\end{gather}
where 
$H\ceq\begin{sbm}1&0\\0&{-1}\end{sbm}$ and
$E\ceq\begin{sbm}0&1\\0&0\end{sbm}$.
Hence we have $\dim\psl(2|2)^f=4|4$ and
\[
 (\psl(2|2)^f)^{\ev} \cong \fsl(2)\oplus\bbC f, \quad 
 (\psl(2|2)^f)^{\od} \cong \text{(lowest weight component of 
 $\bbC^2 \otimes \bbC^2 \oplus \bbC^2 \otimes \bbC^2$)}.
\]

One can check that these generators of $\psl(2|2)^f$ satisfy 
the same relations as \eqref{eq:N=4:Arels}, and hence 
%Then, similarly to the $N=1$ and $N=2$ cases, 
we have a surjective morphism of associative $\bbC$-algebras
\begin{align}\label{eq:N=4:varphi}
 \varphi\colon U(\psl(2|2)^f) \lsrj \wtA(V^{N=4})
\end{align}
given by
\begin{gather}\label{eq:N=4:vp-gen}
 f \lmto [L], \quad 
 j^\alpha \lmto [J^\alpha], \quad 
 g^\beta \lmto [G^\beta], \quad 
 \ol{g}^\beta \lmto [\ol{G}^\beta],\quad(\alpha\in\{0,+,-\},\ \beta\in\{+,-\}).
\end{gather}
%Note that the elements in the domain are the basis elements in \eqref{eq:N=4:basis}.
Furthermore, we have:

\begin{thm}\label{thm:N=4}
The algebra morphism $\varphi$ is bijective, and we have
\begin{align*}
 \varphi\colon U(\psl(2|2)^f) \lsto \wtA(V^{N=4}).
\end{align*}
\end{thm}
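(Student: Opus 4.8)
Since the morphism $\varphi$ in \eqref{eq:N=4:varphi} is already surjective, the task is to prove that it is injective. The plan is to produce a left inverse, i.e.\ a linear map $\ol{\psi}\colon\wtA(V^{N=4})\to U(\psl(2|2)^f)$ with $\ol{\psi}\circ\varphi=\id$. Write $\tilde{u}\in V^{N=4}$ for the normally ordered monomial $X^{i_1}_{(-1)}\cdots X^{i_r}_{(-1)}\vac$ attached to a PBW monomial $u=\hat{x}^{i_1}\cdots\hat{x}^{i_r}$ of $U(\psl(2|2)^f)$ under the dictionary $L\leftrightarrow f$, $J^\alpha\leftrightarrow j^\alpha$, $G^\beta\leftrightarrow g^\beta$, $\ol{G}^\beta\leftrightarrow\ol{g}^\beta$; the eight strong generators match the $4|4$-dimensional basis \eqref{eq:N=4:basis} of $\psl(2|2)^f$ exactly. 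By \eqref{eq:wtA:pd=0} and the relations \eqref{eq:N=2:rel}-type computation recorded in \cref{ss:N=4}, the classes $\{[\tilde u]\}$ already span $\wtA(V^{N=4})$, so it suffices to show that they are $\bbC$-linearly independent, which is precisely the injectivity.

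I would realize $\ol\psi$ through a $\bbC$-linear map $\psi\colon V^{N=4}\to U(\psl(2|2)^f)$ with $\psi(\tilde u)=u$ and $\wtO(V^{N=4})\subseteq\Ker\psi$, so that $\psi$ descends through the quotient and $\ol\psi\circ\varphi=\id$ holds on each PBW monomial by construction. Since $V^{N=4}$ is the enveloping vertex algebra of the linear Lie conformal algebra $L^{N=4}$, it carries a PBW basis of ordered monomials in the modes $X_{(-n)}$ ($n\ge1$) of the strong generators, so $\psi$ is determined by its values on this basis. One is tempted to let $\psi$ annihilate every basis monomial containing a mode $X_{(-n)}$ with $n\ge2$, but this naive choice fails because the coefficients $c(j,n)$ of \eqref{eq:wtA:c(j,n)} do not all vanish: one has $c(2,2)=-\tfrac1{12}$, and unwinding $G^+\Hop{\gamma=1}{2}\ol{G}^-\in\wtO(V^{N=4})$ against the $\lambda$-bracket $[{G^+}_\lambda\ol{G}^-]$ gives the genuine reduction $[G^+_{(-2)}\ol{G}^-]=\tfrac1{12}[L]-\tfrac{c}{720}[\vac]$ in $\wtA(V^{N=4})$. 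Thus $\psi$ must send the derivative-type monomial $G^+_{(-2)}\ol{G}^-$ to $\tfrac1{12}f-\tfrac{c}{720}$, and in general must encode the full reduction of the higher modes to PBW normal form.

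The mechanism that makes such a $\psi$ consistent is the \emph{linearity} of the $\lambda$-brackets of $V^{N=4}$: by the OPEs listed in \cref{ss:N=4}, every product $X_{(j)}Y$ with $j\ge0$ of strong generators is a $\bbC$-linear combination of strong generators, their $\pd$-derivatives, and scalars. Combined with $[\pd a]=0$ and $[[a],[b]]=[a_{(0)}b]$ from \cref{cor:wtA:pd=0}, the centrality of $[L]$ from \cref{prp:wtA:LinZ}, and the facts $\wtO_\gamma(V)=\sum_{n\ge2}V\Hop{\gamma}{n}V$ and $\pd V\subseteq\wtO_\gamma(V)$ from \cref{thm:wtA:wtO}, this keeps every correction term strictly lower in the conformal-weight filtration (refined by the number of generator factors). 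I would therefore define $\psi$ by induction along this filtration, using the preparatory \cref{lem:psi:prep} to put the spanning elements $a\Hop{\gamma=1}{2}b$ of $\wtO(V^{N=4})$ into a standard reduced form, and then prove $\wtO(V^{N=4})\subseteq\Ker\psi$ in \cref{lem:psi:wtO-Ker}. Feeding this back yields $\ol\psi\circ\varphi=\id$ on $U(\psl(2|2)^f)$, whence $\varphi$ is injective and therefore bijective. The identical argument proves \cref{thm:N=1,thm:N=2}, which is why the three cases are treated uniformly in \cref{ss:N=124:bij}.

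The main obstacle is exactly the well-definedness of $\psi$, that is, the confluence of the rewriting that carries an arbitrary element of $V^{N=4}$ to its PBW normal form: one must verify that the nontrivial reductions forced by the coefficients $c(j,n)$ are mutually compatible, independent of the order in which generators are commuted past one another and higher modes are lowered. It is precisely here that the linearity of the $\lambda$-brackets is indispensable, for it guarantees that all correction terms re-enter at strictly lower filtration level and the induction closes. When the OPEs close only nonlinearly on the generators, as happens for $V^{N=3}$ and $V^{N=4,\tbig}$, this fails and one must instead pass to the ``linear realization'' of \cite{GS,KW}, which is the reason Case I\!I is handled separately.
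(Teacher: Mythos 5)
Your overall architecture---surjectivity of $\varphi$ plus a left inverse $\psi$ killing $\wtO(V^{N=4})$, with the linearity of the $\lambda$-brackets as the enabling hypothesis and a uniform treatment of $N=1,2,4$---is exactly the paper's, and your sample reduction is correct: one indeed has $c(2,2)=-\tfrac{1}{12}$, $c(3,2)=0$, $c(4,2)=\tfrac{1}{240}$, hence $[G^+_{(-2)}\ol{G}^-]=\tfrac{1}{12}[L]-\tfrac{c}{720}[\vac]$ in $\wtA(V^{N=4})$. But you place $\psi$ at the wrong level, and the step you yourself flag as ``the main obstacle'' is precisely the one you never prove. Defining a linear map on the PBW basis of $V^{N=4}$ is of course unproblematic; the genuine difficulty is that $\wtO(V^{N=4})$ is spanned by $a\Hop{}{2}b$ for \emph{arbitrary} $a,b\in V^{N=4}$, not merely for strong generators, so the containment $\wtO(V^{N=4})\subseteq\Ker\psi$ amounts to a compatibility (confluence) statement for your inductively assigned values against all these relations and all the coefficients $c(j,n)$ simultaneously. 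Asserting that linearity makes ``the induction close'' is a statement of the theorem to be proved at that point, not an argument; as written, the decisive step of your proposal is missing.

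The paper dissolves this difficulty by passing to associated graded objects for the PBW filtrations. In \cref{prp:N=124:varphi}, the map $\psi$ is defined not on $V$ but on $\gr V$, which is a \emph{free} supercommutative superalgebra on the symbols $\pd^{n}X_i+F_{\low}$; the assignment \eqref{eq:N=124:psi-asn} sends $\pd^{n}X_i\mapsto\delta_{n0}x_i$ and extends uniquely to an algebra morphism, so well-definedness comes for free, and no nontrivial values on higher-mode monomials are ever assigned---your correction terms such as $\tfrac{1}{12}[L]-\tfrac{c}{720}[\vac]$ lie in strictly lower filtration and are invisible in $\gr$. What then remains is only the leading-term containment $\gr_k\wtO\subset W_k\subset\Ker\psi$ of \eqref{eq:psi:wtO-Ker}, with $W_k$ as in \eqref{eq:wtO-gen}, proved in \cref{lem:psi:wtO-Ker} from the filtration estimates of \cref{lem:psi:prep} (e.g.\ $a\Hop{}{1}(b\Hop{}{1}c)\equiv a_{(-1)}b_{(-1)}c$ by \eqref{eq:psi:bl-21}, the Leibniz-type identity \eqref{eq:psi:3-3} for $\Hop{}{2}$ against $\Hop{}{1}$, and $a\Hop{}{2}b=(\pd a)\Hop{}{1}b$ from \eqref{eq:psi:Hop2-Hop1}); the linearity of the $\lambda$-brackets enters only as the hypothesis \eqref{eq:psi:Xi-assump}, namely $(X_i)_{(m)}X_j\in F_1V$ for $m\in\bbN$. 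Bijectivity of $\gr\varphi$ then yields bijectivity of $\varphi$. To repair your version you would either have to prove your confluence claim outright---substantially harder, since it requires tracking all $c(j,n)$ corrections exactly---or make this reduction to the graded level explicit; that is the gap.
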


The proof will be given in \cref{ss:N=124:bij}.

Recall that the vertex subalgebra of $V^{N=4}$ 
strongly generated by $L,J^0,G^+$ and $\ol{G}^-$ 
is isomorphic to the $N=2$ superconformal vertex algebra $V^{N=2}$, i.e., 
the (quotient of the) enveloping vertex algebra of 
the $N=2$ superconformal algebra \cite[\S5.9]{K}, \cite[\S8.3]{KW}.
Restricting \cref{thm:N=4} to it, we have:

\begin{cor}\label[cor]{cor:N=2}
We have an algebra isomorphism
\begin{align*}
 U(\fsl(1|2)^f) \lsto \wtA(V^{N=2}), \quad 
 f \lmto [L], \ j \lmto [J], \ g^+\lmto [G^+], \ g^-\lmto [\ol{G}^-],
\end{align*}
where, using the standard supermatrix notation, we set 
\begin{align}\label{eq:N=2:basis}
 f   \ceq \left[\begin{array}{@{}c@{\ }|@{\ }c@{\ }c@{}} 
                0&0&0 \\ \hline 0&0&0 \\ 0&1&0 \end{array}\right], 
 \quad 
 j   \ceq \left[\begin{array}{@{}c@{\ }|@{\ }c@{\ }c@{}}
                2&0&0 \\ \hline 0&1&0 \\ 0&0& 1 \end{array}\right], 
 \quad
 g^+ \ceq \left[\begin{array}{@{}c@{\ }|@{\ }c@{\ }c@{}}
                0&1&0 \\ \hline 0&0&0 \\ 0&0&0 \end{array}\right], 
 \quad
 g^- \ceq \left[\begin{array}{@{}c@{\ }|@{\ }c@{\ }c@{}} 
                0&0&0 \\ \hline 0&0&0 \\ 1&0&0 \end{array}\right], 
\end{align}
and $\fsl(1|2)^f$ denotes the centralizer of $f$ in 
\begin{align*}
 \fsl(1|2) \ceq \{X \in \fgl(1|2) \mid \str X=0\}, \quad 
 \str\left[\begin{array}{@{}c@{\,}|@{\,}c@{}} A & B \\ \hline C & D \end{array}\right] 
 \ceq \tr A - \tr D.
\end{align*}
Note that $j,g^+$ and $g^-$ form a basis of 
the Lie superalgebra $\fsl(1|2)^f \cong \fgl(1|1)$.
%We thus have $\dim \fsl(1|2)^f=2|2$ and $\fsl(1|2)^f \cong \fgl(1|1)$.
\end{cor}

Moreover, the vertex subalgebra of $V^{N=4}$ 
strongly generated by $L$ and $G \ceq \frac{1}{\sqrt{2}}(G^++\ol{G}^-)$
is isomorphic to the $N=1$ superconformal vertex algebra $V^{N=1}$, i.e., 
the (quotient of the) enveloping vertex algebra of of the $N=1$ superconformal algebra 
\cite[\S5.9]{K}, \cite[\S8.2]{KW}, \cite[Example 2.4]{HK},
also called the Neveu-Schwarz vertex algebra.
Restricting \cref{thm:N=4} to this vertex subalgebra, we have:

\begin{cor}[cf.\,{\cite[Appendix B]{Mi07}}]\label[cor]{cor:N=1}
We have an algebra isomorphism
\[
 U(\osp(1|2)^{f}) \lsto \wtA(V^{N=1}), \quad f \lmto [L], \ g \lmto [G].
\]
Here $\osp(1|2)$ is the ortho-symplectic Lie superalgebra
\begin{align*}%\label{eq:N=1:osp(1|2)}
 \osp(1|2) \ceq \{X \in \fgl(1|2) \mid \st{X}J+JX=0\}, 
\end{align*}
%where $\fgl(1|2)$ denotes the Lie algebra of endomorphisms on the linear superspace
%$\bbC^{1|2}$, 
with %and using the standard supermatrix notation, 
the supermatrix $J$ and the supertranspose $X \mto \st{X}$ given by 
\begin{align*}
 J \ceq \left[\begin{array}{@{}c@{\ }|@{\ }c@{\ }c@{}}
              1&0&0 \\ \hline 0&0&1 \\ 0&{-1}&0 \end{array}\right],
 \quad 
 \st{X} = \left[\begin{array}{@{}c@{\ }|@{\ }c@{}} 
                \trp{A} & \trp{C} \\ \hline -\trp{B} & \trp{D} \end{array}\right]
 \text{ for }
 X = \left[\begin{array}{@{}c@{\ }|@{\ }c@{}}A&B \\ \hline C&D \end{array}\right]. 
\end{align*}
Also, we used the following basis elements of $\osp(1|2)$:
\begin{align*}
 h \ceq \left[\begin{array}{@{}c@{\ }|@{\ }c@{\ }c@{}}
              0&0&0 \\ \hline 0&1&0 \\ 0&0&{-1} \end{array}\right],
 \quad
 e \ceq \left[\begin{array}{@{}c@{\ }|@{\ }c@{\ }c@{}}
              0&0&0 \\ \hline 0&0&1 \\ 0&0&0 \end{array}\right],
 \quad
 f \ceq \left[\begin{array}{@{}c@{\ }|@{\ }c@{\ }c@{}}
              0&0&0 \\ \hline 0&0&0 \\ 0&1&0 \end{array}\right],
 \quad 
 q \ceq \left[\begin{array}{@{}c@{\ }|@{\ }c@{\ }c@{}}
              0&0&1 \\ \hline {-1}&0&0 \\ 0&0&0 \end{array}\right],
 \quad
 g \ceq  \left[\begin{array}{@{}c@{\ }|@{\ }c@{\ }c@{}}
 0&1&0 \\ \hline 0&0&0 \\ 1&0&0 \end{array}\right].
\end{align*}
\end{cor}

Restricting the isomorphism to the vertex subalgebra strongly generated by $L$,
we recover the isomorphism $U(\osp(1|2)^{\ev})^f) \cong \bbC[f] \sto \wtA(V^{N=0})$.

%%%%%%%%%%%%%%%%%%%%%%%%%%%%%%%%%%%%%%%%%%%%%%%%%%%%%%%%%%%%%%%%%%%%%%%%%%%%%%%%%%%%%%%%%%
\subsection{\texorpdfstring{$N=3$}{N=3} superconformal algebra}\label{ss:N=3}

For the $N=3$ case, following %\cite[p.107]{A+a} and 
\cite[\S8.5]{KW}, we consider the vertex algebra $V^{N=3}_c$ strongly generated by
\begin{itemize}
\item 
a conformal element $L$ of central charge $c$ \eqref{eq:wtA:L},
\item
three even primary elements $A^i\ (i=1,2,3)$ of conformal weight $1$ \eqref{eq:wtA:primary},
\item
three odd primary elements $G^i\ (i=1,2,3)$ of conformal weight $3/2$,
\item
and an odd primary element $\Phi$ of conformal weight $1/2$,
\end{itemize}
with the remaining non-zero $\lambda$-brackets given by 
\begin{gather}
\label{eq:N=3:OPE1}
 [{A^i}_\lambda A^{j}] = \ve_{ijk}A^k+\frac{c}{3}\lambda\delta_{ij},\quad
 [{A^i}_\lambda G^j] = \ve_{ijk}G^k+\lambda \Phi\delta_{ij}, \\ 
\label{eq:N=3:OPE2}
 [{G^i}_\lambda G^j] 
  = 2L\delta_{ij}-\ve_{ijk}(\pd+2\lambda)A^k+\frac{c}{3}\lambda^2\delta_{ij},\quad
 [\Phi_\lambda G^i] = A^i, \quad
 [\Phi_\lambda \Phi] = -\frac{c}{3}
\end{gather}
for $i,j\in\{1,2,3\}$.
Here $\ve_{ijk}$ is antisymmetric with $\ve_{123}=1$, 
and $\delta_{ij}$ denotes the Kronecker's delta.
We also used Einstein's summation convention for repeated indices.
According to \cref{cor:wtA:pd=0} and \eqref{eq:wtA:lam-br}, 
the OPEs \eqref{eq:N=3:OPE1} and \eqref{eq:N=3:OPE2} imply 
the following commutation relations in $\wtA(V^{N=3}_c)$:
\begin{gather}
\label{eq:N=3:Zhu_rel1}
 \text{$[L]$ and $[\vac]$ are in the center},\quad 
 [[A^i],[A^j]]=\ve_{ijk}[A^k],\quad 
 [[A^i],[G^j]]=\ve_{ijk}[G^k],\\
\label{eq:N=3:Zhu_rel2}
 [[G^i],[G^j]]=2L\delta_{ij},\quad 
 [[\Phi],[A^i]]=0,\quad 
 [[\Phi],[G^i]]=[A^i],\quad
 [[\Phi],[\Phi]]=-\frac{c}{3}[\vac].
\end{gather}

To determine the Zhu algebra of $V^{N=3}_c$, 
let $\osp(3|2)$ be the ortho-symplectic Lie superalgebra of superdimension $6|6$,
which can be realized by supermatrices as %similarly to \eqref{eq:N=1:osp(1|2)} as 
\begin{align*}
&\osp(3|2) \ceq \biggl\{
 \left[\begin{array}{@{}c@{\,}|@{\,}c@{}} A & B \\ \hline C & D \end{array}\right] \, \bigg| \, 
  A \in \fso(3), \, D \in \fsl(2), \, B \in \Mat_{3\times2}, \, C=-J(\trp{B}) \biggr\},
 \quad
 J \ceq \begin{sbm}0&1\\-1&0\end{sbm}.
\end{align*}
Using this realization, we take a minimal nilpotent element $f=f_{\tmin}\in \osp(3|2)$ as
\begin{align*}
 f \ceq \left[\begin{array}{@{}c@{\,}|@{\,}c@{}} O & O \\ \hline O & F \end{array}\right], \quad
 F \ceq \begin{sbm}0&0\\1&0\end{sbm}.
\end{align*}
Then its centralizer in $\osp(3|2)$
\begin{align*}
 \osp(3|2)^{f} \ceq \{ x \in \osp(3|2) \mid [x,f]=0 \}
\end{align*}
has a basis consisting of
\begin{gather}\label{eq:N=3:basis}
 f, \quad 
 a^i\ceq\left[\begin{array}{@{}c@{\,}|@{\,}c@{}} J^i & O \\ \hline O & O \end{array}\right], 
 \quad 
 g^i\ceq E_{i4}+E_{5i}\quad(i=1,2,3),
\end{gather}
where $E_{pq}$ is the supermatrix unit in $\fgl(3|2)$ and
\begin{gather*} %\label{eq:N=3:basis_notation}
 J^1 \ceq \begin{sbm}0&0&0\\0&0&-1\\0&1&0\end{sbm},\quad
 J^2 \ceq \begin{sbm}0&0&1\\0&0&0\\-1&0&0\end{sbm},\quad
 J^3 \ceq \begin{sbm}0&-1&0\\1&0&0\\0&0&0\end{sbm}.
\end{gather*}
Hence we have $\dim\osp(3|2)^{f}=4|3$ and
\begin{align*}
 (\osp(3|2)^{f})^{\ev}\cong\bbC\oplus\fso(3), \quad
 (\osp(3|2)^{f})^{\od}\cong\bigl(\text{the natural representation $\bbC^3$ of }\fso(3)\bigr).
\end{align*}

The universal enveloping algebra of $\osp(3|2)^{f}$ is almost the desired Zhu algebra, 
but it needs to be extended. The correct form is: 

\begin{lem}\label{lem:R3}
Define the $5|4$-dimensional Lie superalgebra $\frR^{N=3}_c$ by
\begin{gather}
\notag
 \frR^{N=3}_c \ceq 
 (\bbC L\oplus\bbC A^1\oplus\bbC A^2\oplus\bbC A^3\oplus\bbC Z) \oplus
 (\bbC G^1\oplus\bbC G^2\oplus\bbC G^3\oplus\bbC\Phi), \\
\label{eq:N=3:R3_rel}
\begin{split}
 \text{$L$ and $Z$ are in the center},\quad 
 [A^i,A^j]=\ve_{ijk}A^k,\quad 
 [A^i,G^j]=\ve_{ijk}G^k,\\
%\label{eq:N=3:Zhu_rel2'}
 [G^i,G^j]=2L\delta_{ij},\quad 
 [\Phi,A^i]=0, \quad 
 [\Phi,G^i]=A^i,\quad
 [\Phi,\Phi]=-\frac{c}{3}Z.
 \end{split}
\end{gather}
Then, we have an embedding of Lie superalgebras
\begin{gather}
\notag
 \osp(3|2)^{f} \linj \frR^{N=3}_c, \\
\label{eq:N=3:vp-gen}
 f \lmto L, \quad a^i \lmto A^i,\quad g^i \lmto G^i\quad (i=1,2,3).
\end{gather}
Moreover, we have a surjective morphism of associative $\bbC$-algebras
\begin{gather}
\label{eq:N=3:varphi}
 \varphi\colon 
 U_1(\frR^{N=3}_c) \ceq U(\frR^{N=3}_c)/\langle Z-1\rangle
 \lsrj \wtA(V^{N=3}_c), \\ 
 \notag
 Z\lmto[\vac], \quad  X\lmto[X] \quad (X\in\{L,A^i,G^i,\Phi\mid i=1,2,3\}), 
\end{gather}
%given by $X\mto[X]$ for $X\in\{L.J^i,G^i,\Phi\mid i=1,2,3\}$ and $Z\mto[\vac]$,
where $\langle Z-1\rangle$ is the two-sided ideal of $U(\frR^{N=3}_c)$ generated by $Z-1$.
\end{lem}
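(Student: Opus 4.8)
The plan is to treat the three assertions --- that $\frR^{N=3}_c$ is a Lie superalgebra, that $\osp(3|2)^f$ embeds in it, and that $\varphi$ is a well-defined surjection --- in turn, reducing each to the structure already recorded in \eqref{eq:N=3:R3_rel} and \eqref{eq:N=3:Zhu_rel1}--\eqref{eq:N=3:Zhu_rel2}.

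First I would check that the bracket \eqref{eq:N=3:R3_rel} makes $\frR^{N=3}_c$ into a Lie superalgebra. Super-antisymmetry is immediate from the displayed relations (note that $[G^i,G^j]$ and $[\Phi,\Phi]$ are the symmetric odd-odd brackets). For the super Jacobi identity, all triples containing $L$ or $Z$ are trivial since these are central, so it remains to test triples drawn from $A^1,A^2,A^3,G^1,G^2,G^3,\Phi$. The Jacobi identity on $\{A^i,A^j,A^k\}$ holds because $[A^i,A^j]=\ve_{ijk}A^k$ is the $\fso(3)\cong\fsl(2)$ bracket, and the identities on $\{A^i,A^j,G^k\}$ and $\{A^i,G^j,G^k\}$ express that $G$ transforms in the vector representation. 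The substantive checks are the triples involving $\Phi$: for $\{\Phi,G^i,G^j\}$ the central term $[[G^i,G^j],\Phi]$ vanishes and the two surviving terms, built from $[A^i,G^j]=\ve_{ijk}G^k$ and $[A^j,G^i]=-\ve_{ijk}G^k$, cancel; the triple $\{A^i,G^j,\Phi\}$ closes by the analogous cancellation in $A^k$. Here it is essential that the scalar $-\tfrac{c}{3}$ in $[\Phi,\Phi]$ is carried by the \emph{separate} central generator $Z$, and not by $L$: this is exactly what forces the later passage to the quotient $U_1(\frR^{N=3}_c)=U(\frR^{N=3}_c)/\langle Z-1\rangle$, in which $Z$ is identified with the unit.

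Next I would establish the embedding \eqref{eq:N=3:vp-gen}. Since the images $L,A^i,G^i$ are linearly independent basis vectors of $\frR^{N=3}_c$, injectivity is automatic, and it suffices to verify that the map respects brackets. This is a direct computation in the supermatrix realization of $\osp(3|2)^f$ with the basis \eqref{eq:N=3:basis}: using $[J^i,J^j]=\ve_{ijk}J^k$ one gets $[a^i,a^j]=\ve_{ijk}a^k$, while multiplying the supermatrix units $g^i=E_{i4}+E_{5i}$ yields $[a^i,g^j]=\ve_{ijk}g^k$ and $[g^i,g^j]=2\delta_{ij}E_{54}=2f\delta_{ij}$. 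These match precisely the $\frR^{N=3}_c$-relations restricted to the span of $L,A^i,G^i$, so the map is a homomorphism of Lie superalgebras; it is proper because $Z$ and $\Phi$ are not in its image.

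Finally I would construct $\varphi$. View $\wtA(V^{N=3}_c)$ as a Lie superalgebra under its supercommutator. By \cref{cor:wtA:pd=0}, formula \eqref{eq:wtA:a0b}, the supercommutators of $[L],[A^i],[G^i],[\Phi],[\vac]$ are computed from the constant terms of the $\lambda$-brackets \eqref{eq:N=3:OPE1}--\eqref{eq:N=3:OPE2}, and these are exactly the relations \eqref{eq:N=3:Zhu_rel1}--\eqref{eq:N=3:Zhu_rel2}; in particular $[[\Phi],[\Phi]]=-\tfrac{c}{3}[\vac]$. Hence sending $L\mapsto[L]$, $A^i\mapsto[A^i]$, $G^i\mapsto[G^i]$, $\Phi\mapsto[\Phi]$, $Z\mapsto[\vac]$ is a Lie superalgebra homomorphism $\frR^{N=3}_c\to\wtA(V^{N=3}_c)$, which by the universal property of the enveloping algebra extends to an algebra homomorphism $U(\frR^{N=3}_c)\to\wtA(V^{N=3}_c)$. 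Because $[\vac]$ is the unit of $\wtA(V^{N=3}_c)$, the element $Z-1$ maps to $0$, so the homomorphism descends to $\varphi\colon U_1(\frR^{N=3}_c)\to\wtA(V^{N=3}_c)$. Surjectivity follows as in the $N=1,2,4$ cases: $\wtA(V^{N=3}_c)$ is spanned by classes of PBW monomials in the modes of the strong generators applied to $\vac$, which by $[\pd a]=0$ \eqref{eq:wtA:pd=0} and the expansion \eqref{eq:wtA:Hop1} of $\bl$ reduce to products of $[L],[A^i],[G^i],[\Phi]$. I expect the only delicate bookkeeping to be the sign-tracking in the super Jacobi checks of the first step; the genuinely structural point --- and the reason this lemma is stated separately from the $N=1,2,4$ cases --- is the introduction of $Z$, which prevents the Clifford-type relation $[\Phi,\Phi]=-\tfrac{c}{3}Z$ from collapsing the conformal generator $L$ into the unit.
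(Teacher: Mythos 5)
Your proposal is correct and follows essentially the same route as the paper: the paper's (much terser) proof likewise obtains the embedding by comparing the supermatrix brackets of the basis \eqref{eq:N=3:basis} with the relations \eqref{eq:N=3:R3_rel}, and obtains $\varphi$ by the same argument as in \cref{ss:N=1,ss:N=2,ss:N=4}, using the commutation relations \eqref{eq:N=3:Zhu_rel1}--\eqref{eq:N=3:Zhu_rel2} read off from the $\lambda$-brackets via \cref{cor:wtA:pd=0} together with the centrality of $[\vac]$. Your explicit super-Jacobi verifications, the universal-property construction of $\varphi$ through $U(\frR^{N=3}_c)$ with $Z-1\mapsto 0$, and the PBW spanning argument for surjectivity are exactly the details the paper leaves implicit, and they all check out.
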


\begin{proof}
The former half follows from the description \eqref{eq:N=3:basis} of the basis of $\osp(3|2)^f$
and the relations \eqref{eq:N=3:R3_rel} in $\frR^{N=3}_c$. 
The latter half follows from a similar argument to the previous \cref{ss:N=4} 
using the $\lambda$-brackets \eqref{eq:N=3:Zhu_rel1} and \eqref{eq:N=3:Zhu_rel2} in $V^{N=3}_c$, 
the relations \eqref{eq:N=3:Zhu_rel1}, 
and the fact that $[\vac]$ is in the center of $\wtA(V^{N=3}_c)$.
\end{proof}

Moreover, we have:

\begin{thm}\label{thm:N=3}
The algebra morphism $\varphi$ is bijective, 
and the assignment \eqref{eq:N=3:vp-gen} induces an embedding
\begin{align*}
 U(\osp(3|2)^{f})\linj U_1(\frR^{N=3}_c)\lsto\wtA(V^{N=3}_c).
\end{align*}
\end{thm}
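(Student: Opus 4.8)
The plan is to prove bijectivity of $\varphi$ by producing an explicit two-sided inverse, exactly as in the uniform treatment of the $N=1,2,4$ cases. Surjectivity is already supplied by \cref{lem:R3}, so it suffices to construct a linear map $\ol\psi\colon\wtA(V^{N=3}_c)\to U_1(\frR^{N=3}_c)$ such that $\ol\psi\circ\varphi$ is injective; since $\varphi$ is surjective, this forces $\varphi$ to be bijective, and the asserted embedding then drops out of the construction.

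First I would use the fact that the $\lambda$-brackets \eqref{eq:N=3:OPE1}--\eqref{eq:N=3:OPE2} are \emph{linear}, which is the sole reason for adopting the linear realization of \cite{GS} and \cite[\S8.5]{KW}: it exhibits $V^{N=3}_c$ as the enveloping vertex algebra of the Lie conformal superalgebra generated over $\bbC[\pd]$ by $L,A^i,G^i,\Phi$ with precisely those brackets. Hence $V^{N=3}_c$ carries a PBW basis of ordered monomials $X^{(i_1)}_{(-n_1)}\cdots X^{(i_r)}_{(-n_r)}\vac$ in the modes of the eight strong generators, with all $n_j\ge1$. I would then define $\psi\colon V^{N=3}_c\to U_1(\frR^{N=3}_c)$ on this basis by sending such a monomial to the ordered product $\ol{X}^{(i_1)}\cdots\ol{X}^{(i_r)}$ when every $n_j=1$, and to $0$ if some $n_j\ge2$; here $\ol{X}\in\frR^{N=3}_c$ is the image of a generator under \eqref{eq:N=3:vp-gen} (with $\Phi\mapsto\Phi$), and $\psi(\vac)=1$, corresponding to $Z\equiv1$.

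The heart of the matter is to show $\psi\bigl(\wtO(V^{N=3}_c)\bigr)=0$, so that $\psi$ descends to $\ol\psi$. By \cref{thm:wtA:wtO} \ref{i:thm:wtO:ge2} it is enough to kill the spanning elements $a\Hop{}{n}b$ with $n\ge2$. Expanding these through \eqref{eq:wtA:Hopn}, reducing $a$ and $b$ to PBW monomials, and reordering via the commutator formula \eqref{eq:VA:commutator}, linearity is exactly what guarantees that each $X_{(j)}Y$ ($j\ge0$) of two generators is again a $\bbC[\pd]$-combination of single generators and of $\vac$, never a quadratic normal-ordered term, so that no spurious all-$(-1)$ monomial can arise and survive $\psi$; the $n\ge2$ weight then pushes every surviving contribution into $\Ker\psi$. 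This is the step I expect to be the main obstacle: one must track \emph{all} the lower-order tails $a_{(0)}b,a_{(1)}b,\dots$ in Huang's product \eqref{eq:wtA:Hop1} and verify that the vacuum term produced by $[\Phi_\lambda\Phi]=-\tfrac{c}{3}$ is absorbed into the central generator $Z$ before passing to $U_1(\frR^{N=3}_c)=U(\frR^{N=3}_c)/\langle Z-1\rangle$. The preparatory identities controlling these tails play the same role here as in the $N=1,2,4$ cases, and it is precisely at this point that a non-linear presentation (such as the bare minimal $W$-algebra generators) would break the argument.

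Finally I would compute $\ol\psi\circ\varphi$ on the PBW words of $U_1(\frR^{N=3}_c)$. Applying $\varphi$ to an ordered word $\ol{X}^{(i_1)}\cdots\ol{X}^{(i_r)}$ gives $[X^{(i_1)}]\bl\cdots\bl[X^{(i_r)}]=[X^{(i_1)}_{(-1)}\cdots X^{(i_r)}_{(-1)}\vac]+(\text{terms of fewer generators})$, where the lower terms are dictated by the commutators \eqref{eq:wtA:a0b}, which reproduce the brackets of $\frR^{N=3}_c$; since $\psi$ returns the leading monomial unchanged and annihilates the higher modes, $\ol\psi\circ\varphi$ is unitriangular for the filtration by number of generators, hence injective. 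Together with surjectivity from \cref{lem:R3} this proves $\varphi$ bijective. For the embedding, $\osp(3|2)^f$ is an ideal of $\frR^{N=3}_c$ (one checks $[G^i,\Phi]=A^i$, $[A^i,\Phi]=[L,\Phi]=0$) with two-dimensional quotient spanned by the images of $\Phi$ and the central $Z$, so PBW gives $U(\frR^{N=3}_c)\cong U(\osp(3|2)^f)\otimes\bbC[Z]\otimes(\bbC\oplus\bbC\Phi)$ as a left $U(\osp(3|2)^f)$-module; setting $Z=1$ yields both the module isomorphism $U_1(\frR^{N=3}_c)\cong U(\osp(3|2)^f)\otimes C(\bbC)$ of \eqref{eq:N=34:mod_str} and the injectivity of $U(\osp(3|2)^f)\linj U_1(\frR^{N=3}_c)$. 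Composing this inclusion with the isomorphism $\varphi$ produces the asserted chain $U(\osp(3|2)^f)\linj U_1(\frR^{N=3}_c)\lsto\wtA(V^{N=3}_c)$.
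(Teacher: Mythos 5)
Your overall architecture (surjectivity from \cref{lem:R3}, an inverse built from a PBW-type normal form, and the decomposition $U(\frR^{N=3}_c)\cong U(\osp(3|2)^f)\otimes\bbC[Z]\otimes(\bbC\oplus\bbC\Phi)$ with $\Phi^2=-\tfrac{c}{6}$ after setting $Z=1$, which is exactly \cref{rmk:N=3:wtA-freeUmod}) matches the paper's plan. But the descent step contains a genuine gap: the map $\psi$ you define on $V^{N=3}_c$ itself --- sending all-$(-1)$ PBW monomials to ordered words in $U_1(\frR^{N=3}_c)$ and any monomial containing a mode $\le -2$ to zero --- does \emph{not} annihilate $\wtO(V^{N=3}_c)$, so your $\ol\psi$ is not well defined. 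Concretely, from \eqref{eq:wtA:c(j,n)} one has $f_2(z;1)=z^{-2}-\tfrac{1}{12}+O(z)$ (note $c(1,2)=0$ but $c(2,2)=-\tfrac{1}{12}$), hence
\[
 G^i\Hop{}{2}G^i \;=\; G^i_{(-2)}G^i-\tfrac{1}{12}\,G^i_{(0)}G^i+\cdots
 \;=\; G^i_{(-2)}G^i-\tfrac{1}{6}L+(\text{a multiple of }\vac),
\]
and your $\psi$ sends this element of $\wtO$ to $-\tfrac16 L+(\text{const})\neq 0$ in $U_1(\frR^{N=3}_c)$. The lower-order tails are \emph{not} pushed into $\Ker\psi$ by the weight $n\ge2$: they are shorter monomials (a single generator, or $\vac$), on which $\psi$ is visibly nonzero, so the claim ``$\psi(\wtO)=0$'' is false as stated.

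This is precisely why the paper defines $\psi$ not on $V$ but on the associated graded $\gr V$ of the PBW filtration $F_\bl V$ (\cref{prp:N=124:varphi}): there $\gr V$ is a free supercommutative algebra on the classes of $\pd^{n}X_i$, the assignment $\pd^{n}X_i\mapsto\delta_{n0}x_i$ extends canonically, and the filtration estimates of \cref{lem:psi:prep} --- all of which hold only modulo $F_{p+q+r-1}V$, cf.\ \eqref{eq:psi:3-1}--\eqref{eq:psi:3-3} --- yield $\gr_k\wtO\subset W_k\subset\Ker\psi$ (\cref{lem:psi:wtO-Ker}). In the graded picture the offending contribution $-\tfrac16 L+(\text{const})$ lies in strictly lower filtration and disappears. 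Your unitriangularity argument for $\ol\psi\circ\varphi$ is then exactly the statement that $\ol\psi$ and $\gr\varphi$ are mutually inverse on the graded level, from which bijectivity of $\varphi$ follows. So the proof is repaired by transplanting your construction wholesale to the associated graded; your treatment of the embedding $U(\osp(3|2)^f)\linj U_1(\frR^{N=3}_c)$ (the image of $\osp(3|2)^f$ is indeed an ideal, and PBW gives injectivity and the free-module structure) is correct and agrees with the paper.
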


The proof will be given in \cref{ss:N=124:bij}.

\begin{rmk}\label[rmk]{rmk:N=3:wtA-freeUmod}
Set $U \ceq U(\osp(3|2)^{f})$. 
Then, by the PBW-type basis of $U_1(\frR^{N=3}_c)$, we have
\begin{align*}
 \wtA(V^{N=3}_c) \cong U[\vac]\oplus U[\Phi]
\end{align*}
as left $U$-modules. 
Thus, $\wtA(V^{N=3}_c)$ is a free $U$-module of rank $1|1$.
\end{rmk}

\begin{comment}
Following \cite[\S8.5, pp.448]{KW}, 
consider the vertex algebra $V^{N=3}$ strongly generated by 
\[
 \wt{L}, J^0, J^\pm, \wt{G}^0, \wt{G}^\pm, \Phi, 
\]
where $L$ is the conformal element of central charge $\wt{c}=c+\hf=-6k-3$,
$J^\pm$ and $J^0$ are even primary of conformal weight $1$,
$\wt{G}^\pm$ and $\wt{G}^0$ are odd primary of conformal weight $3/2$, 
and $\Phi$ is odd primary of conformal weight $1/2$.
The remaining nontrivial $\lambda$-brackets are 
\begin{gather*}
 [{J^0}_\lambda J^{\pm}] = \pm2J^\pm, \quad 
 [{J^0}_\lambda J^0] = \frac{4\wt{c}}{3}\lambda, \quad 
 [{J^+}_\lambda J^-] = J^0+\frac{2\wt{c}}{3}\lambda, \\
 [{J^0}_\lambda \wt{G}^\pm] = \pm2\wt{G}^\pm, \quad 
 [{J^0}_\lambda \wt{G}^0] = -2\lambda\Phi, \quad
 [{J^+}_\lambda \wt{G}^-] = -2\wt{G}^0+2\lambda\Phi, \quad
 [{J^-}_\lambda \wt{G}^+] = \wt{G}^0+\lambda\Phi, \quad
 [{J^+}_\lambda \wt{G}^0] = -2\wt{G}^+, \quad
 [{J^-}_\lambda \wt{G}^0] = -\wt{G}^-, \\
 [{\wt{G}^+}_\lambda \wt{G}^-] 
  = \wt{L}+\frac{1}{4}(\pd+2\lambda)J^0+\frac{\wt{c}}{6}\lambda^2, \quad
 [{\wt{G}^+}_\lambda \wt{G}^0] = \frac{1}{4}(\pd+2\lambda)J^+, \quad
 [{\wt{G}^0}_\lambda \wt{G}^0] = \wt{L}+\frac{\wt{c}}{6}\lambda^2, \quad 
 [{\wt{G}^-}_\lambda \wt{G}^0] =-\frac{1}{2}(\pd+2\lambda)J^-, \\
 [{\wt{G}^+}_\lambda \Phi] = \frac{1}{4}J^+, \quad 
 [{\wt{G}^-}_\lambda \Phi] = \frac{1}{2}J^-, \quad 
 [{\wt{G}^0}_\lambda \Phi] =-\frac{1}{4}J^0, \quad
 [\Phi_\lambda \Phi] = -\frac{\wt{c}}{6}.
\end{gather*}
\end{comment}

%%%%%%%%%%%%%%%%%%%%%%%%%%%%%%%%%%%%%%%%%%%%%%%%%%%%%%%%%%%%%%%%%%%%%%%%%%%%%%%%%%%%%%%%%%
\subsection{Big \texorpdfstring{$N=4$}{N=4} superconformal algebra}\label{ss:big4}

Following \cite[\S8.6]{KW}, 
we fix $a \in \bbC\setminus\{0,-1\}$ and $k\in\bbC$, and set
\begin{align}
    k^+\ceq-(a+1)k,\quad 
    k^-\ceq-\frac{a+1}{a}k,\quad
    \gamma^+\ceq\frac{1}{a+1},\quad
    \gamma^-\ceq\frac{a}{a+1},\quad
    c\ceq -6k.
\end{align}
We consider the vertex algebra $V^{N=4,\tbig}_{c,a}$ strongly generated by
\begin{itemize}
\item 
a conformal element $\wt{L}$ of central charge $c=-6k$ \eqref{eq:wtA:L},
\item
seven even primary elements $\wt{J}^\alpha,\ \wt{J}'^\alpha\ (\alpha\in\{0,+,-\})$ 
and $\xi$ of conformal weight $1$ \eqref{eq:wtA:primary},
\item
four odd primary elements $\wt{G}^{++},\wt{G}^{+-},\wt{G}^{-+},\wt{G}^{--}$ of conformal weight $3/2$,
\item
and four odd primary elements $\sigma^{++},\sigma^{+-},\sigma^{-+},\sigma^{--}$ of conformal weight $1/2$.
\end{itemize}
Here the elements $\sigma$'s are the free fermions with non-zero $\lambda$-brackets
\begin{align}\label{eq:big4:OPE_s}
     [{\sigma^{+\pm}}_\lambda \sigma^{-\mp}] = k = -\tfrac{c}{6},   
\end{align}
and the element $\xi$ is the free boson with $\lambda$-bracket
\begin{align}
    [\xi_\lambda \xi] = k \lambda = -\tfrac{c}{6}\lambda.
\end{align}
The remaining non-zero $\lambda$-brackets are given in \cite[\S8.6, pp.453--455]{KW}, 
and a few typographical errors were corrected in \cite[\S6]{KMPb}
(see \eqref{eq:big4:KWmiss1} and \eqref{eq:big4:KWmiss2} below): 
\begin{align}
&[\wt{J}^{0}{}_\lambda \wt{J}^{0}]
 \;=\;%\frac{\wt{c}(a+1)}{3}
 2\lambda k^+,\qquad
 [\wt{J}^{0}{}_\lambda \wt{J}^{\pm}]
 \;=\; \pm 2\wt{J}^{\pm},\qquad
 [\wt{J}^{+}{}_\lambda \wt{J}^{-}]
 \;=\; \wt{J}^{0} 
 + %\frac{\wt{c}(a+1)}{6}
 \lambda k^+,\\
&[\wt{J}'^{0}{}_\lambda \wt{J}'^{0}] 
 \;=\; %\frac{\wt{c}(a+1)}{3a}
 2\lambda k^-,\qquad
 [\wt{J}'^{0}{}_\lambda \wt{J}'^{\pm}] 
 \;=\; \pm 2\wt{J}'^{\pm},\qquad
 [\wt{J}'^{+}{}_\lambda \wt{J}'^{-}] 
 \;=\; \wt{J}'^{0} %\frac{\wt{c}(a+1)}{6a}
 +\lambda k^-,\\
&[\wt{J}^{0}{}_\lambda \wt{G}^{+\pm}] \;=\; \wt{G}^{+\pm}-\lambda a\,\sigma^{+\pm},\qquad
 [\wt{J}^{0}{}_\lambda \wt{G}^{-\pm}] \;=\; -\wt{G}^{-\pm}+\lambda\,\sigma^{-\pm},\\
&[\wt{J}^{+}{}_\lambda \wt{G}^{-\pm}] \;=\; \mp\wt{G}^{+\pm}\pm\lambda a\,\sigma^{+\pm},\qquad
 [\wt{J}^{-}{}_\lambda \wt{G}^{+\pm}] \;=\; \mp\wt{G}^{-\pm}\pm\lambda\,\sigma^{-\pm},\\
&[\wt{J}'^{0}{}_\lambda \wt{G}^{+\pm}] \;=\; \pm\wt{G}^{+\pm}\pm\lambda \,\sigma^{+\pm},\qquad
 [\wt{J}'^{0}{}_\lambda \wt{G}^{-\pm}] \;=\; \pm\wt{G}^{-\pm}\pm\lambda\,a^{-1}\sigma^{-\pm},\\
\label{eq:big4:KWmiss1}
&[\wt{J}'^{\pm}{}_\lambda \wt{G}^{+\mp}] 
 \;=\; -\wt{G}^{+\pm}-\lambda\,\sigma^{+\pm},\qquad
 [\wt{J}'^{\pm}{}_\lambda \wt{G}^{-\mp}] %\NB{\wt{G}^{-\mp}} 
 \;=\; \wt{G}^{-\pm}+\lambda\,a^{-1}\sigma^{-\pm}, %\quad \NB{(\text{KW misprint})},
\\
\label{eq:big4:KWmiss2}
&[\wt{G}^{\pm+}{}_\lambda \wt{G}^{\mp-}] 
    \;=\; \wt{L} 
    \;+\; \frac{1}{2}(\partial+2\lambda)(\pm\gamma^+\wt{J}^{0}+\gamma^-\wt{J}'^{0}) %\NB{\frac{1}{2}}
    \;+\; \frac{c}{6}\,\lambda^{2}, %\quad \NB{\text{(KW misprint)}},
\\
&[\wt{G}^{s\pm}{}_\lambda \wt{G}^{s\mp}] 
    \;=\; %\mp\frac{1}{a+1}\,
    \mp\gamma^+(\partial+2\lambda)\wt{J}^{s},\qquad
 [\wt{G}^{\pm s}{}_\lambda \wt{G}^{\mp s}] 
    \;=\; %\mp\frac{a}{a+1}\,
    \mp\gamma^-(\partial+2\lambda)\wt{J}'^{s},\\
&[\wt{J}^{0}{}_\lambda \sigma^{\pm s}] 
 \;=\; \pm \sigma^{\pm s},\qquad
 [\wt{J}^{+}{}_\lambda \sigma^{-\mp}] \;=\; \pm a\,\sigma^{+\mp},\qquad
 [\wt{J}^{-}{}_\lambda \sigma^{+\pm}] \;=\; \mp a^{-1}\,\sigma^{-\pm},\\
&[\wt{J}'^{0}{}_\lambda \sigma^{s\pm}] \;=\; \pm \sigma^{s\pm},\qquad
 [\wt{J}'^{+}{}_\lambda \sigma^{\pm-}] \;=\; \mp \sigma^{\pm+},\qquad
 [\wt{J}'^{-}{}_\lambda \sigma^{\pm+}] \;=\; \mp \sigma^{\pm-},\\
&[\wt{G}^{+\pm}{}_\lambda \sigma^{-\mp}] 
    \;=\; %\frac{a}{2(a+1)}
    \frac{1}{2}\gamma^-\bigl(\wt{J}^{0}\mp \wt{J}'^{0}\bigr)
    \;+\; \Big(\frac{a}{2}\Big)^{\!1/2}\,\xi,\\
&[\wt{G}^{-\pm}{}_\lambda \sigma^{+\mp}] 
    \;=\; %\pm\frac{1}{2(a+1)}
    \mp\frac{1}{2}\gamma^+\bigl(\wt{J}^{0}\pm \wt{J}'^{0}\bigr)
    \;+\; \Big(\frac{1}{2a}\Big)^{\!1/2}\,\xi,\\
\label{eq:big4:OPE_g} 
&[\wt{G}^{\pm s}{}_\lambda \sigma^{\mp s}] 
    \;=\; %\frac{a}{a+1}
    \pm\gamma^{\mp}\wt{J}'^{s},\qquad  
    [\wt{G}^{s\pm}{}_\lambda \sigma^{s\mp}] 
    \;=\; %\pm\frac{a}{a+1}
    \mp\gamma^s\wt{J}^{s},\qquad
 [\wt{G}^{\pm s}{}_\lambda \,\xi] 
    \;=\; (\pd+2\lambda)\left(\frac{a^{\pm1}}{2}\right)^{1/2}\sigma^{\pm s},
\end{align}
where $s\in\{+,-\}$.
Similar to the $N=3$ case, by Corollary \ref{cor:wtA:pd=0} and \eqref{eq:wtA:lam-br}, 
the OPEs \eqref{eq:big4:OPE_s}--\eqref{eq:big4:OPE_g} 
imply the following commutation relations in $\wtA(V^{N=4,\tbig}_{c,a})$:
\begin{align}
&\label{eq:big4:Zhu_rel1}
 [\wt{L}], [\xi]\text{ and }[\vac]\text{ are in the center,}\quad 
 \{[\wt{J}^0],[\wt{J}^+],[\wt{J}^-]\}\text{ and }\{[\wt{J}'^0],[\wt{J}'^+],[\wt{J}'^-]\}
 \text{ are $\fsl(2)$-triples},\\
&[[\wt{J}^{0}],[\wt{G}^{\pm s}]] 
 \;=\; \pm[\wt{G}^{\pm s}],\quad
 [[\wt{J}^{+}],[\wt{G}^{-\pm}]] 
 \;=\; \mp[\wt{G}^{+\pm}],\quad
 [[\wt{J}^{-}],[\wt{G}^{+\pm}]] 
 \;=\; \mp[\wt{G}^{-\pm}],\\
&[[\wt{J}'^{0}],[\wt{G}^{s\pm}]] 
 \;=\; \pm[\wt{G}^{s\pm}],\quad
 [[\wt{J}'^{+}],[\wt{G}^{\pm-}]] 
 \;=\; \mp[\wt{G}^{\pm+}],\quad
 [[\wt{J}'^{-}],[\wt{G}^{\pm+}]] 
 \;=\; \mp[\wt{G}^{\pm-}],\\
&[[\wt{G}^{\pm+}],[\wt{G}^{\mp-}]] 
 \;=\; [\wt{L}] ,\quad
 [[\wt{G}^{\pm s}],[\wt{G}^{\mp s}]] 
 \;=\; [[\wt{G}^{s\pm}],[\wt{G}^{s\mp}]] \;=\; 0,\\
&[[\wt{J}^{0}],[\sigma^{\pm s}]] 
 \;=\; \pm [\sigma^{\pm s}],\quad
 [[\wt{J}^{+}],[\sigma^{-\pm}]] 
 \;=\; \mp a[\sigma^{+\pm}],\quad
 [[\wt{J}^{-}],[\sigma^{+\pm}]], 
 \;=\; \mp a^{-1}[\sigma^{-\pm}]\\
&[[\wt{J}'^{0}],[\sigma^{s\pm}]] 
 \;=\; \pm [\sigma^{s\pm}],\quad
 [[\wt{J}'^{+}],[\sigma^{\pm-}]] 
 \;=\; \mp[\sigma^{\pm+}],\quad
 [[\wt{J}'^{-}],[\sigma^{\pm+}]] 
 \;=\; \mp[\sigma^{\pm-}],\\
&[[\wt{G}^{+\pm}],[\sigma^{-\mp}]] 
 \;=\; \frac{1}{2}\gamma^-[\wt{J}^{0}]\;\mp\;\frac{1}{2}\gamma^- [\wt{J}'^{0}]
 \;+\; \Big(\frac{a}{2}\Big)^{\!1/2}\,[\xi],\\
&[[\wt{G}^{-\pm}],[\sigma^{+\mp}]] 
 \;=\; \mp\frac{1}{2}\gamma^+[\wt{J}^{0}] \;-\; \frac{1}{2}\gamma^+[ 
 \wt{J}'^{0}]
 \;+\; \Big(\frac{1}{2a}\Big)^{\!1/2}\,[\xi],\\ 
&\label{eq:big4:Zhu_rel2}
[[\wt{G}^{\pm s}],[\sigma^{\mp s}]] 
 \;=\; \pm\gamma^{\mp}[\wt{J}'^{s}],\quad
 [[\wt{G}^{s\pm}],[\sigma^{s\mp}]] 
 \;=\; \mp\gamma^s[\wt{J}^{s}],\quad
 [[\sigma^{+\pm}],[\sigma^{-\mp}]]
 \;=\; -\tfrac{c}{6}[\vac].
\end{align}

Next, we turn to the Zhu algebra of $V^{N=4,\tbig}_{c,a}$.
Let 
\begin{align}\label{eq:big4:frg}
 \frg \ceq D(2,1;a), \quad a\in\bbC\setminus\{0,-1\}, 
\end{align}
be the exceptional Lie superalgebra of superdimension $9|8$
(see \cite[\S2.5.2]{K77} and \cite[\S8.6]{KW} for example).
As described in \cite[\S8.6, pp.449--450]{KW}, $\frg$ is the contragredient Lie superalgbera 
associated to the simple roots $\alpha_i$ ($i=1,2,3$) with scalar products 
\[
 \bigl[(\alpha_i|\alpha_j)\bigr]_{i,j=1}^3 = 
 \begin{bmatrix}
  -2\gamma^+ & \gamma^+ & 0 \\ \gamma^+ & 0 & \gamma^- \\ 0 & \gamma^- & -2\gamma^-
 \end{bmatrix}, \quad
 \gamma^+ = \frac{1}{a+1},\quad\gamma^- = \frac{a}{a+1}.
 %\frac{-1}{a+1}
 %\left[\begin{NiceArray}{ccc} 2 & -1 & 0 \\ -1 & 0 & -a \\ 0 & -a & 2a \end{NiceArray}\right]
\]
The even (resp.\ odd) positive roots are given by 
\[
 h_{100},h_{001},h_{121}, \quad 
 (\text{resp.\ } h_{010},h_{110},h_{011},h_{111}), \quad 
 h_{lmn} \ceq l\alpha_1+m\alpha_2+n\alpha_3.
\]
The highest root is $\theta=h_{121}$, 
and the scalar products is normalized in the way $(\theta|\theta)=2$.
Choose the positive root elements $e_{lmn}=e_{h_{lmn}} \in \frg$ 
such that the non-zero brackets are 
\begin{gather*}
 [e_{100},e_{010}] = e_{110}, \quad 
 [e_{100},e_{011}] = e_{111}, \quad 
 [e_{010},e_{001}] = e_{011}, \quad 
 [e_{010},e_{111}] = e_{121}, \\
 [e_{001},e_{110}] =-e_{111}, \quad 
 [e_{001},e_{110}] =-e_{111}, \quad
 [e_{110},e_{011}] =-e_{121}
\end{gather*}
and choose the negative root elements $f_{\alpha}=e_{-h_{\alpha}}$ 
such that $[e_\alpha,f_\alpha]=\alpha$.

We take the minimal nilpotent element
\begin{align}\label{eq:big4:f}
 f \ceq f_{121} = f_\theta.
\end{align}
Then, using the Cartan matrix and the root space decomposition of $\frg$, 
we find that the centralizer $\frg^f$ has a basis consisting of the following elements:
\begin{align}\label{eq:big4:g^f_basis_1}
 &f=f_{121},\quad 
 j^{+} \ceq e_{100},\quad 
 j^{0} \ceq -\tfrac{1}{\gamma^+}h_{100},\quad 
 j^{-} \ceq -\tfrac{1}{\gamma^+}f_{100},\\
&\label{eq:big4:g^f_basis_2}
 j'^{+} \ceq e_{001},\quad 
 j'^{0} \ceq -\tfrac{1}{\gamma^-}h_{001},\quad 
 j'^{-} \ceq -\tfrac{1}{\gamma^-}f_{001},\\
&\label{eq:big4:g^f_basis_3}
 g^{++} \ceq f_{010},\quad 
 g^{-+} \ceq f_{110},\quad 
 g^{+-} \ceq -f_{011},\quad
 g^{--} \ceq f_{111}.
\end{align}
In particular, we have $\dim\frg^f=7|4$.

\begin{rmk}
Let us describe the structure of $\frg^f$ in another way.
Since the even part $\frg^{\ev}$ is known to be isomorphic to $\fsl(2)^{\oplus3}$ 
and the odd part $\frg^{\od}$ is isomorphic to $(\bbC^2)^{\otimes3}$ 
as a $\frg^{\ev}$-module (see \cite{K77}), 
the minimal nilpotent element $f$ of the semisimple Lie algebra $\frg^{\ev}$
can be extended to an $\fsl(2)$-subalgebra $\frs$ in $\frg$, and 
\begin{align*}
 (\frg^f)^{\ev}\cong\bbC\oplus\fso(4),\quad
 (\frg^f)^{\od}\cong(\text{$\frs$-lowest weight component of 
 $\frs\oplus\fso(4)\overset{\natural}{\curvearrowright}\bbC^2\otimes\bbC^4$}),
\end{align*}
where the notation $\overset{\natural}{\curvearrowright}$ denotes the action on the tensor product of the two natural representations, and the latter is a $(\frg^f)^{\ev}$-module isomorphism.
Hence we have $\dim\frg^{f}=7|4$. 
\end{rmk}

Similarly to the $N=3$ case, we need to extend $\frg^f=D(2.1;a)^{f_\theta}$ 
to describe the Zhu algebra $\wtA(V^{N=4,\tbig}_{c,a})$.

\begin{lem}\label{lem:R4}
Define the $9|8$-dimensional Lie superalgebra $\frR^{N=4}_{c,a}$ by 
\begin{align*}
 \frR^{N=4}_{c,a} \ceq 
 \Bigl(\bbC\wt{L}\oplus
       \bigoplus_{\alpha}(\bbC\wt{J}^{\alpha} \oplus \bbC\wt{J}'^{\alpha})
       \oplus\bbC\xi\oplus\bbC Z\Bigr)
 \oplus\Bigl(\bigoplus_{\pm\pm}(\bbC \wt{G}^{\pm\pm}\oplus\bbC \sigma^{\pm\pm})\Bigr),
\end{align*}
\begin{align*}
&\text{$\wt{L}$, $\xi$ and $Z$ are in the center}, \quad 
 \{\wt{J}^0,\wt{J}^+,\wt{J}^-\}\text{ and }\{\wt{J}'^0,\wt{J}'^+,\wt{J}'^-\}
 \text{ are $\fsl(2)$-triples},\\
&[\wt{J}^{0},\wt{G}^{\pm s}] 
 \;=\; \pm\wt{G}^{\pm s},\quad
 [\wt{J}^{+},\wt{G}^{-\pm}] 
 \;=\; \mp\wt{G}^{+\pm},\quad
 [\wt{J}^{-},\wt{G}^{+\pm}] 
 \;=\; \mp\wt{G}^{-\pm},\\
&[\wt{J}'^{0},\wt{G}^{s\pm}] 
 \;=\; \pm\wt{G}^{s\pm},\quad
 [\wt{J}'^{+},\wt{G}^{\pm-}] 
 \;=\; \mp\wt{G}^{\pm+},\quad
 [\wt{J}'^{-},\wt{G}^{\pm+}] 
 \;=\; \mp\wt{G}^{\pm-},\\
&[\wt{G}^{\pm+},\wt{G}^{\mp-}] 
 \;=\; \wt{L} ,\quad
 [\wt{G}^{\pm s},\wt{G}^{\mp s}] 
 \;=\; [\wt{G}^{s\pm},\wt{G}^{s\mp}] \;=\; 0,\\
&[\wt{J}^{0},\sigma^{\pm s}] 
 \;=\; \pm\sigma^{\pm s},\quad
 [\wt{J}^{+},\sigma^{-\pm}] 
 \;=\; \mp a\sigma^{+\pm},\quad
 [\wt{J}^{-},\sigma^{+\pm}], 
 \;=\; \mp a^{-1}\sigma^{-\pm}\\
&[\wt{J}'^{0},\sigma^{s\pm}] 
 \;=\; \pm \sigma^{s\pm},\quad
 [\wt{J}'^{+},\sigma^{\pm-}] 
 \;=\; \mp\sigma^{\pm+},\quad
 [\wt{J}'^{-},\sigma^{\pm+}] 
 \;=\; \mp\sigma^{\pm-},\\
&[\wt{G}^{+\pm},\sigma^{-\mp}] 
 \;=\; \frac{1}{2}\gamma^-\bigl(\wt{J}^{0}\mp \wt{J}'^{0}\bigr)
 \;+\; \Big(\frac{a}{2}\Big)^{\!1/2}\,\xi,\\
&[\wt{G}^{-\pm},\sigma^{+\mp}] 
 \;=\; \mp\frac{1}{2}\gamma^+\bigl(\wt{J}^{0}\pm 
 \wt{J}'^{0}\bigr)
 \;+\; \Big(\frac{1}{2a}\Big)^{\!1/2}\,\xi,\\ 
&[\wt{G}^{\pm s},\sigma^{\mp s}] 
 \;=\; \pm\gamma^{\mp}\wt{J}'^{s},\quad
 [\wt{G}^{s\pm},\sigma^{s\mp}] 
 \;=\; \mp\gamma^s\wt{J}^{s},\quad
 [\sigma^{+\pm},\sigma^{-\mp}]
 \;=\; -\tfrac{c}{6}Z.
\end{align*}
Then, we have an embedding of Lie superalgebras
\begin{gather}
\notag
 \frg^{f} \linj \frR^{N=4}_{c,a}, \\
\label{eq:bN=4:vp-gen}
 f \lmto \wt{L}, \quad j^{\alpha} \lmto \wt{J}^{\alpha},\quad 
 j'^{\alpha} \lmto \wt{J}'^{\alpha},\quad
 g^{\pm\pm} \lmto \wt{G}^{\pm\pm}\quad(\alpha\in\{0,+,-\}).
\end{gather}
We also have a surjective morphism of associative $\bbC$-superalgebras
\begin{align}\label{eq:bN=4:varphi}
 \varphi\colon 
 U_1(\frR^{N=4}_{c,a}):=U(\frR^{N=4}_{c,a})/\langle Z-1\rangle
 \lsrj \wtA(V^{N=4}_{c,a})
\end{align}
induced by the correspondence $Z\mto[\vac]$ and $X\mto[X]$ for 
$X\in\{\wt{L},\wt{J}^{\alpha},\wt{J}'^{\alpha},\xi,\wt{G}^{\pm\pm},\sigma^{\pm\pm}\}$.
Here $\langle Z-1\rangle$ is the two-sided ideal of $U(\frR^{N=4}_{c,a})$ generated by $Z-1$.
\end{lem}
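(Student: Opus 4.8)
The plan is to establish the two assertions of the lemma---the Lie superalgebra embedding $\frg^f \linj \frR^{N=4}_{c,a}$ and the surjection $\varphi$---separately, following verbatim the template of \cref{lem:R3}. For the embedding, I would first check that the linear span of $\{\wt{L},\wt{J}^\alpha,\wt{J}'^\alpha,\wt{G}^{\pm\pm}\}$ is closed under the super-bracket of $\frR^{N=4}_{c,a}$: every defining relation among these elements again lands in this span, since the $\wt{G}$--$\wt{G}$ brackets produce only $\wt{L}$ or $0$ and the $\wt{J}$--$\wt{G}$ brackets produce $\wt{G}$'s. This exhibits a Lie subsuperalgebra of dimension $7|4$, the complementary $2|4$ directions being spanned by $\xi, Z, \sigma^{\pm\pm}$, so the inclusion is automatically proper.

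It then remains to identify this subsuperalgebra with $\frg^f$. Recalling that the basis elements $g^{++}, g^{-+}, g^{+-}, g^{--}$ of $\frg^f$ are the negative root vectors $f_{010}, f_{110}, -f_{011}, f_{111}$, I would compute the two nonzero odd brackets $[g^{++},g^{--}]$ and $[g^{-+},g^{+-}]$ from the negative-root analogues of $[e_{010},e_{111}]=e_{121}$ and $[e_{110},e_{011}]=-e_{121}$, using $[e_\alpha,f_\alpha]=\alpha$ and the normalization $(\theta|\theta)=2$, and verify that both equal $f=f_{121}$, which maps to $\wt{L}$ under \eqref{eq:bN=4:vp-gen}. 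The two $\fsl(2)$-triples and the $\wt{J}$--$\wt{G}$ relations then match by the definitions $j^+=e_{100}$, $j^0=-\tfrac{1}{\gamma^+}h_{100}$, $j^-=-\tfrac{1}{\gamma^+}f_{100}$ (and their $j'$-counterparts with $\gamma^-$). Carefully tracking the $\gamma^\pm$- and $a$-dependence through these rescalings is the step that demands the most bookkeeping.

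For the surjection the argument is formal, exactly as in the $N=1,2,3,4$ cases. By \cref{cor:wtA:pd=0}, the super-commutator $[[x],[y]]$ in $\wtA(V^{N=4,\tbig}_{c,a})$ equals $[x_{(0)}y]$, i.e.\ the class of the $\lambda^0$-coefficient of $[x_\lambda y]$; this is precisely how the relations \eqref{eq:big4:Zhu_rel1}--\eqref{eq:big4:Zhu_rel2} were read off from the OPEs \eqref{eq:big4:OPE_s}--\eqref{eq:big4:OPE_g}. Since these coincide verbatim with the defining relations of $\frR^{N=4}_{c,a}$ once $Z$ is sent to $[\vac]$, the assignment $Z\mapsto[\vac]$, $X\mapsto[X]$ extends to a well-defined homomorphism $\varphi\colon U_1(\frR^{N=4}_{c,a}) \to \wtA(V^{N=4,\tbig}_{c,a})$. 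Surjectivity is immediate because, using $[\pd a]=0$ from \eqref{eq:wtA:pd=0}, $\wtA$ is spanned by products of the classes of the strong generators $\wt{L},\wt{J}^\alpha,\wt{J}'^\alpha,\xi,\wt{G}^{\pm\pm},\sigma^{\pm\pm}$, all of which lie in the image.

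The main obstacle is not the surjection but the internal consistency of the relations underlying the former half, in particular the assertion that $[\xi]$ is genuinely central in $\wtA$. For $\wt{L}$ this follows from \cref{prp:wtA:LinZ} and the conformal-weight grading; for $\xi$ the only relation that could obstruct centrality is $[\wt{G}^{\pm s}{}_\lambda\xi]=(\pd+2\lambda)\left(\tfrac{a^{\pm1}}{2}\right)^{1/2}\sigma^{\pm s}$, whose $\lambda^0$-coefficient is the total derivative $\left(\tfrac{a^{\pm1}}{2}\right)^{1/2}\pd\sigma^{\pm s}$, so that $[[\wt{G}^{\pm s}],[\xi]]=0$ by \eqref{eq:wtA:pd=0}. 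Checking that this cancellation, the vanishing self-brackets of $\xi$ and $\sigma^{\pm\pm}$, and the $D(2,1;a)$ structure constants are all mutually compatible with the normalization $(\theta|\theta)=2$ is the delicate part of the argument.
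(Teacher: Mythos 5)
Your proposal is correct and follows essentially the same route as the paper: the embedding is verified by matching the $D(2,1;a)$ structure constants (the $\fsl(2)$-triples from $[h_{100},e_{100}]$ etc., the $j$--$g$ relations from $[h,f_{lmn}]$ and $[f_{100},f_{lmn}]$, and the odd brackets $[f_{010},f_{111}]=f_{121}$, $[f_{110},f_{011}]=-f_{121}$, which with $g^{+-}=-f_{011}$ give both odd self-brackets equal to $f\mapsto\wt{L}$, exactly as you predict), while the surjection is the same formal argument via \cref{cor:wtA:pd=0}, the relations \eqref{eq:big4:Zhu_rel1}--\eqref{eq:big4:Zhu_rel2}, and centrality of $[\vac]$. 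Your extra observation that centrality of $[\xi]$ survives because the $\lambda^0$-coefficient of $[\wt{G}^{\pm s}{}_\lambda\xi]$ is a total derivative killed by \eqref{eq:wtA:pd=0} is a correct elaboration of a step the paper leaves implicit in \eqref{eq:big4:Zhu_rel1}.
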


\begin{proof}
We first prove the former half. 
By the definition of $\frg^f$, the element $f$ is in its center. Since we have
\begin{align*}
&[h_{100},e_{100}]=-2\gamma^+e_{100},\quad
 [h_{100},f_{100}]= 2\gamma^+f_{100},\quad
 [e_{100},f_{100}]=          h_{100},\\
&[h_{001},e_{001}]=-2\gamma^-e_{001},\quad
 [h_{001},f_{001}]= 2\gamma^-f_{001},\quad
 [e_{001},f_{001}]=          h_{001},
\end{align*}
each of $\{j^0,j^+,j^-\}$ in \eqref{eq:big4:g^f_basis_1} and $\{j'^0,j'^+,j'^-\}$ in \eqref{eq:big4:g^f_basis_2} forms an $\fsl(2)$-triple.
Their mutual commutativity follows from the Cartan matrix and the root space decomposition of $\frg$.
The commutator relations between $\{j^\alpha,j'^{\alpha}\}$ and $\{g^{\pm\pm}\}$ in \eqref{eq:big4:g^f_basis_3} follow from
\begin{align*}
&[h_{100},f_{010}]=-\gamma^+f_{010},\quad
 [h_{100},f_{110}]= \gamma^+f_{110},\quad
 [h_{100},f_{011}]=-\gamma^+f_{011},\quad
 [h_{100},f_{111}]= \gamma^+f_{111},\\
&[h_{001},f_{010}]=-\gamma^-f_{010},\quad
 [h_{001},f_{110}]=-\gamma^-f_{110},\quad
 [h_{001},f_{011}]= \gamma^-f_{011},\quad
 [h_{001},f_{111}]= \gamma^-f_{111},\\
&[f_{100},f_{010}]= \gamma^+f_{110},\quad
 [f_{100},f_{011}]= \gamma^+f_{111},\quad
 [f_{001},f_{010}]=-\gamma^-f_{011},\quad
 [f_{001},f_{110}]=-\gamma^-f_{111}.
\end{align*}
Moreover, the commutator relations among $\{g^{\pm\pm}\}$ follow from
\begin{align}
 [f_{010},f_{111}]
 =(\gamma^++\gamma^-)f_{121}
 =f_{121},\quad
 [f_{110},f_{011}]
 =-(\gamma^++\gamma^-)f_{121}
 =-f_{121}.
\end{align}

The latter half follows from a similar argument to the previous \cref{ss:N=4,ss:N=3} 
using the $\lambda$-brackets \eqref{eq:big4:OPE_s}--\eqref{eq:big4:OPE_g} in $V^{N=4,\tbig}_{c,a}$, 
the relations \eqref{eq:big4:Zhu_rel1}--\eqref{eq:big4:Zhu_rel2}, 
and the fact that $[\vac]$ is in the center of $\wtA(V^{N=4,\tbig}_{c,a})$.
\begin{comment}
By direct calculation, we obtain the following multiplication table of $\frg$:
\begin{align}
    &[e_{110},[f_{100},f_{010}]]=\gamma^+h_{110}\quad\lto\quad [f_{100},f_{010}]=\gamma^+f_{110},\\
    &[e_{111},[f_{100},f_{011}]]=\gamma^+h_{111}\quad\lto\quad
    [f_{100},f_{011}]=\gamma^+ f_{111},\\
    &[e_{121},[f_{010},f_{111}]]=
    (\gamma^++\gamma^-)h_{121}=h_{121}\quad\lto\quad[f_{010},f_{111}]=f_{121},\\
    &[e_{011},[f_{001},f_{010}]]=-\gamma^-h_{011}\quad\lto\quad
    [f_{001},f_{010}]=-\gamma^-f_{011},\\
    &[e_{111},[f_{001},f_{110}]]=-\gamma^-h_{111}\quad\lto\quad
    [f_{001},f_{110}]=-\gamma^- f_{111},\\
    &[e_{121},[f_{110},f_{011}]]=-(\gamma^++\gamma^-)h_{121}=-h_{121}\quad\lto\quad
    [f_{110},f_{011}]=-f_{121}.
\end{align}
\end{comment}
\end{proof}

Then, similarly to the $N=3$ case, we have:

\begin{thm}\label{thm:big4}
The algebra morphism $\varphi$ is bijective, and we have an embedding
\begin{align*}
 &U(\frg^{f}) \linj U_1(\frR^{N=4}_{c,a}) \lsto \wtA(V^{N=4,\tbig}_{c,a}).
\end{align*}
%for $\frg = D(2,1;a)$.
\end{thm}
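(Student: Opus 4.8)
The surjectivity of $\varphi$ is already \cref{lem:R4}, so the content of the statement is the injectivity of $\varphi$ together with the fact that the composite $U(\frg^{f})\linj U_1(\frR^{N=4}_{c,a})\srj\wtA(V^{N=4,\tbig}_{c,a})$ remains injective. The latter is the easy half: \cref{lem:R4} gives a Lie superalgebra embedding $\frg^{f}\linj\frR^{N=4}_{c,a}$, hence $U(\frg^{f})\linj U(\frR^{N=4}_{c,a})$ by the (super) PBW theorem, and since the image of $\frg^{f}$ involves only the generators $\wt{L},\wt{J}^{\alpha},\wt{J}'^{\alpha},\wt{G}^{\pm\pm}$ and neither the central $Z$ nor $\xi,\sigma^{\pm\pm}$, the PBW basis of $U_1(\frR^{N=4}_{c,a})=U(\frR^{N=4}_{c,a})/\langle Z-1\rangle$ shows this map survives the specialization $Z=1$. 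Thus everything reduces to proving that $\varphi$ is bijective, i.e.\ the assertion $U_1(\frR^{N=4}_{c,a})\lsto\wtA(V^{N=4,\tbig}_{c,a})$.

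To prove bijectivity I would construct a one-sided inverse $\ol{\psi}$, following the uniform strategy of \cref{ss:N=124:bij} and exactly as in the $N=3$ case (\cref{thm:N=3}). The decisive feature is that, in the linear realization of \cite[\S8.6]{KW} (going back to \cite{GS}) adopted here, every $\lambda$-bracket \eqref{eq:big4:OPE_s}--\eqref{eq:big4:OPE_g} is \emph{linear}: each right-hand side is a $\bbC[\lambda]$-combination of the strong generators $X\in\{\wt{L},\wt{J}^{\alpha},\wt{J}'^{\alpha},\xi,\wt{G}^{\pm\pm},\sigma^{\pm\pm}\}$ and their $\pd$-derivatives, with no normally ordered products of two generators. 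Consequently $V^{N=4,\tbig}_{c,a}$ is the quotient enveloping vertex algebra $V^{c}(R)$ of the Lie conformal algebra $R$ freely generated over $\bbC[\pd]$ by these $X$, and the PBW theorem for enveloping vertex algebras equips it with a basis of ordered monomials $\pd^{(k_1)}X_{i_1}\cdots\pd^{(k_r)}X_{i_r}\vac$.

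On this basis I would define a linear map $\psi\colon V^{N=4,\tbig}_{c,a}\to U_1(\frR^{N=4}_{c,a})$ sending a monomial to $0$ whenever some $k_j\ge1$, and otherwise to the ordered PBW monomial $X_{i_1}\cdots X_{i_r}$, under the identification of generators from \cref{lem:R4} with $\vac\mapsto Z\mapsto 1$. The preparatory step is to expand the operations $a\Hop{\gamma=1}{1}b$ and $a\Hop{\gamma=1}{n}b$ via \eqref{eq:wtA:Hop1}, \eqref{eq:wtA:Hopn} on such monomials; here linearity is essential, since it forces each zero-mode $X_{(0)}$ to act on generators precisely as the Lie bracket of $\frR^{N=4}_{c,a}$ (this is what produces the closed relations \eqref{eq:big4:Zhu_rel1}--\eqref{eq:big4:Zhu_rel2} from \cref{cor:wtA:pd=0} and \eqref{eq:wtA:a0b}), so that reordering generators introduces only shorter monomials or $\pd$-factors. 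The crux is then to check $\psi(\wtO(V^{N=4,\tbig}_{c,a}))=0$: by \cref{thm:wtA:wtO} it suffices to treat $a\Hop{\gamma=1}{n}b$ for $n\ge2$, and on PBW monomials each such element contributes only terms carrying a $\pd$-factor or of strictly lower length, all lying in $\ker\psi$. This yields a well-defined $\ol{\psi}\colon\wtA(V^{N=4,\tbig}_{c,a})\to U_1(\frR^{N=4}_{c,a})$.

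Finally I would compare $\ol{\psi}$ with $\varphi$. From the expansion \eqref{eq:wtA:Hop1} the top-length part of $[X_{i_1}]\bl\cdots\bl[X_{i_r}]$ is the normally ordered monomial $(X_{i_1})_{(-1)}\cdots(X_{i_r})_{(-1)}\vac$, whence $\ol{\psi}\circ\varphi$ sends $X_{i_1}\cdots X_{i_r}$ to itself modulo strictly shorter PBW monomials; that is, $\ol{\psi}\circ\varphi$ is unitriangular for the length filtration and therefore bijective, which forces $\varphi$ to be injective. Combined with the surjectivity of \cref{lem:R4} this gives the isomorphism, and with the first paragraph the full chain. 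The main obstacle is precisely the well-definedness of $\psi$, namely the vanishing $\psi(\wtO)=0$ and its compatibility with PBW ordering: one must track the higher ``quantum corrections'' $a_{(0)}b,a_{(1)}b,\dots$ in \eqref{eq:wtA:Hop1} and verify, using the linear realization, that they create no relations beyond those defining $\frR^{N=4}_{c,a}$. Were the $D(2,1;a)$-part instead presented through its nonlinear $W$-algebra OPEs, the terms $X_{(0)}Y$ would contain genuine length-two products of generators, breaking this unitriangularity and necessitating the nonlinear Lie algebra machinery of \cite{DK} in place of ordinary PBW.
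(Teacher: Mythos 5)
Your proposal is correct and follows essentially the same route as the paper's uniform proof in \cref{ss:N=124:bij}: the paper likewise exploits the linearity of the OPEs in the realization of \cite[\S8.6]{KW} to define a map $\psi$ that kills $\pd$-derivatives of the strong generators and sends PBW monomials to PBW monomials, with the crucial well-definedness $\psi(\wtO)=0$ secured by the filtration estimates of \cref{lem:psi:prep} and \cref{lem:psi:wtO-Ker}, exactly the ``quantum correction'' tracking you identify as the crux. The only cosmetic difference is that the paper passes to associated graded spaces and shows $\ol{\psi}$ and $\gr\varphi$ are mutually inverse on $\gr\wtA$ and $\gr U$, whereas you argue unitriangularity of $\ol{\psi}\circ\varphi$ at the filtered level --- the same triangularity argument in different packaging.
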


The proof will be given in \cref{ss:N=124:bij}.

\begin{rmk}
Similarly to \cref{rmk:N=3:wtA-freeUmod},
$\wtA(V^{N=4,\tbig}_{c,a})$ is a free left $U(\frg^{f})$-module,
which can be proved by the PBW-type basis of $U_1(\frR^{N=4}_{c,a})$.
\end{rmk}

%%%%%%%%%%%%%%%%%%%%%%%%%%%%%%%%%%%%%%%%%%%%%%%%%%%%%%%%%%%%%%%%%%%%%%%%%%%%%%%%%%%%%%%%%%
\subsection{Proof of the isomorphisms}\label{ss:N=124:bij}

Here we show \cref{thm:N=4,thm:N=3,thm:big4} in a unified form.
For simplicity, we set $V$ to be one of the superconformal algebras 
in the previous subsections, and denote 
\[
 a\Hop{}{n}b \ceq a\Hop{\gamma=1}{n}b \ (a,b \in V), \quad 
 \wtO\ceq\wtO(V)=V\Hop{}{2}V, \quad 
 \wtA\ceq\wtA(V)=V/\wtO.
\]
We denote by $\{X_i\}_{i \in I}$ the strong generators of $V$ 
given in \cref{ss:N=4}--\cref{ss:big4}.
Explicitly,
\[
 \{X_i\}_{i \in I} = %\{L,G\}, \ \{L,J,G^{\pm}\},\ 
 \{L,J^0,J^{\pm},G^{\pm}, \ol{G}^{\pm}\}, \ 
 \{L, A^i, G^i, \Phi\}, \ 
 \{ \wt{L},\wt{J}^\alpha, \wt{J}'^\alpha, \wt{G}^{\pm s}, \sigma^{\pm s}, \xi\} 
\]
for $N=4$, $N=3$ and big $N=4$. %with $s\in\{\pm\}$. 
Furthermore, we denote 
\begin{align*}
 &U\ceq U(\frg^f),\quad
 (\frg,f) =  (\psl(2|2),f);\\ %(\osp(1|2),f), \ (\fsl(1|2),f), \
 &U\ceq U_1(\frR^{N=3}_c), \quad
 (\frg,f) = (\osp(3|2),f_{\tmin}); \\
 &U\ceq U_1(\frR^{N=4,\tbig}_{c,a}), \quad
 (\frg,f) = (D(2,1;a),f_{\tmin}),
\end{align*}
where each $f$ is given in \cref{ss:N=4}--\cref{ss:big4}.
%Then, by \eqref{eq:N=1:varphi}, \eqref{eq:N=2:varphi} and \eqref{eq:N=4:varphi}, 
Then, we have a surjective morphism of associative $\bbC$-algebras 
\[
 \varphi\colon U \lsrj \wtA.
\]
Let $x_i \in U$  ($i \in I$) be the elements given in \eqref{eq:N=4:vp-gen}, 
\eqref{eq:N=3:vp-gen} and \eqref{eq:bN=4:vp-gen} satisfying $\varphi(x_i)=[X_i]$.

Then, \cref{thm:N=4,thm:N=3,thm:big4} follow from the next statement.

\begin{prp}\label[prp]{prp:Ngen:varphi}
$\varphi$ is bijective.
\end{prp}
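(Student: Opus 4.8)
The plan is to exploit that $\varphi\colon U \srj \wtA$ is already surjective, so that it remains only to prove injectivity; I will do this by constructing a left inverse, i.e.\ a linear map $\ol\psi\colon\wtA\to U$ with $\ol\psi\circ\varphi=\id_U$. Indeed, once such a $\ol\psi$ exists, $\varphi(u)=0$ forces $u=\ol\psi(\varphi(u))=0$, giving injectivity and hence, together with surjectivity, the asserted bijection.

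First I would fix a PBW basis of $V$. Since each $V$ is (a quotient of) the enveloping vertex algebra of a Lie conformal algebra strongly generated by $\{X_i\}_{i\in I}$, the PBW theorem for enveloping vertex algebras furnishes a basis of $V$ consisting of the ordered monomials $X_{i_1,(n_1)}\cdots X_{i_r,(n_r)}\vac$ with $n_1,\dots,n_r\le-1$, the pairs $(i_j,n_j)$ listed in a fixed total order (with the standard parity constraints on the odd generators). On this basis I define a linear map $\psi\colon V\to U$ by sending a monomial all of whose modes satisfy $n_j=-1$ to the correspondingly ordered product $x_{i_1}\cdots x_{i_r}\in U$ (in the fixed PBW order of $U$), and sending every monomial containing at least one mode $n_j\le-2$ to $0$. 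The content of the preparatory step is that this assignment is consistent with the chosen orderings, so that $\psi$ is well defined.

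The crucial step is to show that $\psi$ annihilates $\wtO$, so that it descends to $\ol\psi\colon\wtA\to U$. By \cref{thm:wtA:wtO}\,\ref{i:thm:wtO:ge2} it suffices to check $\psi(a\Hop{}{2}b)=0$, and using the explicit $\Hop{}{n}$-products of \eqref{eq:wtA:Hopn} together with $[\pd a]=0$ from \eqref{eq:wtA:pd=0} one reduces, by induction on the number of modes, to the behaviour of the $(n)$-products on generators. This is exactly where the \emph{linearity} of the $\lambda$-brackets is indispensable: for $V^{N=1,2,4}$ (and, after passing to the linear realization of \cite{GS,KW}, also for $V^{N=3}$ and $V^{N=4,\tbig}$) every product $X_{i,(k)}X_j$ with $k\ge0$ is a $\bbC$-linear combination of the generators $X_\ell$, their derivatives $\pd X_\ell$, and the vacuum. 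Consequently, whenever one commutes modes past one another or converts a higher mode into a $(-1)$-mode via the commutator formula \eqref{eq:VA:commutator}, the correction terms are again (derivatives of) single generators, strictly lowering the number of modes; no nonlinear terms appear, and the combinatorics of these corrections matches precisely the defining relations of $\frg^f$ (resp.\ $\frR^{N=3}_c$, $\frR^{N=4}_{c,a}$) already recorded in the commutation relations of $\wtA$ such as \eqref{eq:N=1:GG} and \eqref{eq:N=2:rel}. I expect this bookkeeping to be the main obstacle, since it requires carefully tracking the mode-shifts produced by the commutator and Borcherds/Jacobi identities.

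Finally I would verify $\ol\psi\circ\varphi=\id_U$ by a filtered induction on PBW degree. Writing $\varphi(x_{i_1}\cdots x_{i_r})=[X_{i_1}\Hop{}{1}\cdots\Hop{}{1}X_{i_r}]$ and expanding each $\Hop{}{1}$ via \eqref{eq:wtA:Hop1}, the top-degree part is the $r$-fold $(-1)$-product $X_{i_1,(-1)}\cdots X_{i_r,(-1)}\vac$, while all remaining terms have strictly fewer modes by linearity. Applying $\psi$, the top part is sent (after reordering into the PBW order, whose corrections are absorbed in lower degree exactly as the commutators $[x_i,x_j]$ are in $U$) to $x_{i_1}\cdots x_{i_r}$, and the induction hypothesis disposes of the lower-degree terms. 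Hence $\ol\psi\circ\varphi=\id_U$, which proves that $\varphi$ is injective and completes the uniform proof of \cref{thm:N=1,thm:N=2,thm:N=4}, and, with the extensions $U_1(\frR)$ replacing $U(\frg^f)$, of \cref{thm:N=3,thm:big4} as well.
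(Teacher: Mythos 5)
Your overall strategy---constructing an inverse of $\varphi$ via a PBW-type map $\psi$ sending monomials with all modes equal to $-1$ to the corresponding ordered products in $U$, and monomials containing a mode $\le-2$ to zero---is the same idea as the paper's, but implementing it at the \emph{ungraded} level creates a genuine gap: your $\psi$ does not annihilate $\wtO$, so $\ol{\psi}$ does not exist. Concretely, in $V^{N=1}$ one computes from \eqref{eq:wtA:Hopn}, using $f_2(z;1)=z^{-2}-\tfrac{1}{12}+\tfrac{1}{240}z^2+\cdots$ together with $G_{(0)}G=2L$, $G_{(1)}G=0$, $G_{(2)}G=\tfrac{2c}{3}\vac$, that
\begin{align*}
 G\Hop{}{2}G \;=\; G_{(-2)}G\;-\;\tfrac{1}{6}\,L\;+\;\tfrac{c}{360}\,\vac \;\in\; \wtO.
\end{align*}
Your $\psi$ kills the monomial $G_{(-2)}G=G_{(-2)}G_{(-1)}\vac$, but sends this element of $\wtO$ to $-\tfrac{1}{6}f+\tfrac{c}{360}\ne 0$ in $U(\osp(1|2)^{f})$. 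The point is that the tail terms $a_{(k)}b$ ($k\ge0$) of $a\Hop{}{2}b$ are, by the very linearity of the $\lambda$-brackets you invoke, combinations of the generators $X_\ell$, their derivatives, and $\vac$---i.e., exactly the elements your $\psi$ does \emph{not} kill. No induction on the number of modes can repair this, because the claimed identity $\psi(\wtO)=0$ is simply false on degree-$2$ elements.

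This failure is precisely why the paper's proof passes to the associated graded of the PBW filtrations $F_\bl V$ and $F_\bl U$: in $\gr_2\wtO$ the offending terms $-\tfrac16 L+\tfrac{c}{360}\vac$ lie in strictly lower filtration degree and vanish, so the graded map $\psi\colon\gr V\to\gr U$ (well defined because $\gr V$ is \emph{freely} supercommutatively generated by the classes of $\pd^nX_i$) does annihilate $\gr\wtO$; establishing this is the content of \cref{lem:psi:wtO-Ker}, which rests on the filtration estimates of \cref{lem:psi:prep} (notably $(F_pV)_{(m)}(F_qV)\subset F_{p+q-1}V$ for $m\ge0$, and the comparisons \eqref{eq:psi:3-1}--\eqref{eq:psi:3-3} of $\Hop{}{1}$- and $\Hop{}{2}$-products modulo lower degree). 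One then inverts only $\gr\varphi$ and concludes bijectivity of $\varphi$ by the standard filtered argument. Your final step ($\ol{\psi}\circ\varphi=\id_U$ by filtered induction) is sound in spirit, but it only becomes available after this graded fix; salvaging your ungraded $\psi$ would require correcting its values on deeper-mode monomials by lower-degree elements of $U$, which amounts to reproving the graded statement anyway.
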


\begin{proof}
We consider the induced map $\gr\varphi$ on the graded spaces
\begin{align*}
 \gr\varphi\colon \gr U \lto \gr \wtA \ceq (\gr V)/(\gr \wtO). 
\end{align*}
Here $\gr U$ denotes the graded space associated to the PBW-type filtration $F_{\bl}U$
on $U$ by
\[
    F_0U\ceq\bbC 1,\quad
    F_{n+1}U\ceq\Span_{\bbC}\{u,x_iu\mid i\in I,\ u\in F_nU\}.
\]
Also, using the strong generators $\{X_i\}_i$ of $V$, 
we define an increasing filtration $F_{\bl}V$ on $V$ by 
\[
 F_0V \ceq \bbC\vac,\quad 
 F_{n+1}V \ceq \Span_{\bbC}\{v,(X_i)_{(-m)}v\mid i\in I,\ m>0,\ v\in F_nV\}.
\]
This is nothing but the PBW filtration on $V$ regarded as the vacuum module
of the corresponding Lie superalgebra.
It induces another filtration $F_{\bl}\wtO$ on the subspace $\wtO\subset V$.
Finally, $\gr V = \bigoplus_{k=0}^\infty \gr_kV$, $\gr_kV \ceq F_kV/F_{k-1}V$ and 
$\gr\wtO = \bigoplus_{k=0}^\infty \gr_k\wtO$, 
$\gr_k\wtO \ceq (F_kV \cap \wtO)/(F_{k-1}V\cap\wtO)$ 
are the associated graded spaces.
\begin{comment}
Note that we have 
\begin{align}\label{eq:grvarphi}
 (\gr\varphi)(u_i + \Flow) = [X_i] + \Flow \quad (i \in I),
\end{align}
where, for $u \in F_mU \subset U$ and $x \in F_n\wtA \subset \wtA$, 
we denote $u+\Flow \ceq u+F_{m-1}U \in \gr U$ and $x+\Flow \ceq x+F_{n-1}U \in \gr\wtA$.
\end{comment}

To prove that $\varphi$ is bijective, 
we construct the inverse $\psi$ of $\gr\varphi$ as follows.
Using the Kronecker's delta $\delta_{ij}$, we consider the assignment
\begin{align}\label{eq:N=124:psi-asn}
 \gr V \ni \pd^{{n}}X_{i} + F_{\low} \lmto \delta_{n0} x_i + F_{\low} \in \gr U\quad 
 (i \in I, \, n \in \bbZ_{\ge0}),
\end{align}
where for $v \in F_mV \subset V$, we denote $v+\Flow \ceq v+F_{m-1}V \in \gr V$.
Since $\gr V$ is a supercommutative algebra freely generated by 
$\{\pd^{n}X_{i} + \Flow \mid i\in I, \, n\in\bbZ_{\ge0}\}$ and 
$\gr U$ is a supercommutative algebra (not freely in general) 
generated by $\{x_i + \Flow \mid i \in I\}$,
the assignment \eqref{eq:N=124:psi-asn} uniquely extends to a well-defined morphism 
of supercommutative algebras
\[
 \psi\colon \gr V \lto \gr U. 
\]
To show that $\psi$ descends to a map from $\gr\wtA=\gr V/\gr\wtO$, 
it suffices to prove
\begin{align}\label{eq:psi:wtO-Ker}
 \gr_k\wtO \subset W_k \subset \Ker\psi \quad (\forall k\ge 0),
\end{align}
where $W_0 \ceq \{0\}$ and 
\begin{align}\label{eq:wtO-gen}
 W_k \ceq 
 \Span_{\bbC}\Bigl\{\pd^{n_1}X_{i_1}\Hop{}{1} (\cdots(\pd^{n_{k-1}}X_{i_{k-1}}\Hop{}{1}
  (\pd^{n_{k}}X_{i_{k}}\Hop{}{1} \vac))\cdots)+F_{k-1}V \mid 
  i_j\in I, \ \tsum_{j=1}^k n_j>0 \Bigr\}
\end{align}
for $k > 0$.
We will show \eqref{eq:psi:wtO-Ker} in the \cref{lem:psi:wtO-Ker} below.
Assuming that, we have $\gr\wtO = \bigoplus_{k\ge0}\gr_k\wtO \subset \Ker \psi$, 
and $\psi$ descends to
\begin{align*}
 \ol{\psi}\colon \gr\wtA=\gr V/\gr\wtO \lto \gr U, \quad 
 [X_i+F_{\low}] \lmto x_i+F_{\low},
\end{align*}
where $[ \, ]$ denotes the equivalence class in the quotient space $\gr\wt{A}$.
Then, the construction says that $\ol{\psi}$ and $\gr\varphi$ are inverse to each other,
and we conclude that $\varphi$ is bijective.
\end{proof}

Before stating the promised \cref{lem:psi:wtO-Ker}, we give some preliminaries.

\begin{lem}\label[lem]{lem:psi:prep}
Let $V$ be a vertex algebra, and $\{X_i \mid i \in I\} \subset V$ be 
some given $\bbZ/2\bbZ$-homogeneous elements.
We set
\begin{align}\label{eq:psi:PBW-F}
 F_0V \ceq \bbC\vac,\quad 
 F_{n+1}V \ceq \Span_{\bbC}\{v,(X_i)_{(-m)}v\mid i\in I,\ m>0,\ v\in F_nV\}
 \quad (n\in\bbN).    
\end{align}
Assume that the elements $\{X_i \mid i \in I\} \subset V$ satisfy 
\begin{align}\label{eq:psi:Xi-assump}
 (X_i)_{(m)}X_j \subset F_1 V \quad (\forall m \in \bbN, \, \forall i,j \in I).
\end{align}
Then, the following statements hold.
\begin{enumerate}
\item 
For any $p,q\in \bbN$ and $m \in \bbZ$, we have 
\begin{align}\label{eq:psi:1-2}
 (F_pV)_{(m)}(F_qV) \subset F_{p+q}V.
\end{align}
If moreover $m \in \bbN$, then we have 
\begin{align}\label{eq:psi:bl-1_prf}
 (F_pV)_{(m)}(F_qV) \subset F_{p+q-1}V.
\end{align}

\item 
For any $p,q\in\bbN$, we have
\begin{align}\label{eq:psi:12F}
 (F_pV)\Hop{}{1}(F_qV) \subset F_{p+q}V, \quad 
 (F_pV)\Hop{}{2}(F_qV) \subset F_{p+q}V.
\end{align}

\item
For any $a\in F_pV$, $b \in F_qV$ and $c \in F_rV$ with $p,q,r \in \bbN$,  we have 
\begin{align}
\label{eq:psi:bl-21}
    a \Hop{}{1} (b \Hop{}{1} c) + F_{p+q+r-1}V
 &=a_{(-1)}b_{(-1)}c+F_{p+q+r-1}V \\
\label{eq:psi:bl-22}
 &=\bigl(a_{(-1)}b\bigr)_{(-1)}c+F_{p+q+r-1}V \\
\label{eq:psi:bl-23}
 &=(a \Hop{}{1} b) \Hop{}{1} c + F_{p+q+r-1}V.
\end{align} 

\item 
For any $a\in F_pV$, $b \in F_qV$ and $c \in F_rV$ with $p,q,r \in \bbN$, we have 
\begin{align}
\label{eq:psi:3-1}
(a \Hop{}{1} b)\Hop{}{2}c+F_{p+q+r-1}V 
&=(a_{(-1)}b)_{(-2)}c+F_{p+q+r-1}V \\
\label{eq:psi:3-2}
&=a_{(-1)}b_{(-2)}c+a_{(-2)}b_{(-1)}c+F_{p+q+r-1}V \\
\label{eq:psi:3-3}
&=a\Hop{}{1}(b\Hop{}{2}c)
 +a\Hop{}{2}(b\Hop{}{1} c)+F_{p+q+r-1}V
\end{align}
\end{enumerate}
\end{lem}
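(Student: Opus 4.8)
The plan is to prove part (1) first by a double induction and then to derive parts (2)--(4) from it in an essentially formal way. The single tool I would use throughout is the \emph{iterate formula}, obtained from the Borcherds identity \eqref{eq:VA:bor} by setting its outer index to $0$: for $N,K\in\bbZ$,
\[
 (a_{(N)}b)_{(K)}c = \sum_{j\ge0}(-1)^j\binom{N}{j}\bigl(a_{(N-j)}(b_{(K+j)}c) - (-1)^N p(a,b)\, b_{(N+K-j)}(a_{(j)}c)\bigr),
\]
supplemented by the commutator formula \eqref{eq:VA:commutator}. The guiding principle is that, thanks to the linearity hypothesis \eqref{eq:psi:Xi-assump}, a non-negative mode $a_{(m)}$ with $m\ge0$ behaves as a ``contraction'' that lowers the filtration degree by one, whereas a negative mode raises it by one, exactly matching the recursive definition \eqref{eq:psi:PBW-F} of $F_\bullet V$; with the convention $F_{-1}V\ceq\{0\}$ the base cases cause no trouble.

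For part (1) I would first treat $p=1$, i.e. $(X_i)_{(m)}(F_qV)\subset F_{1+q}V$ in general and $\subset F_qV$ for $m\ge0$, by induction on $q$. Writing a spanning vector of $F_{q+1}V$ as $(X_{i'})_{(-s)}u$ with $u\in F_qV$, $s>0$, I commute $(X_i)_{(m)}$ past $(X_{i'})_{(-s)}$ via \eqref{eq:VA:commutator}; the commutator terms $((X_i)_{(l)}X_{i'})_{(m-s-l)}u$ with $l\ge0$ are controlled because \eqref{eq:psi:Xi-assump} forces $(X_i)_{(l)}X_{i'}\in F_1V$, so the inductive hypothesis at level $q$ applies. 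It suffices to argue for the generators $X_i$ rather than all of $F_1V$, since by \eqref{eq:VA:pd-com} a non-negative mode of $\pd^kX_i$ is a scalar multiple of a non-negative mode of $X_i$ (or vanishes), so the refined bound is stable under $\pd$; the base $q=0$ uses $Y(\vac,z)=\id$. With this case established, I prove the general statement by induction on $p$: for a spanning vector $(X_i)_{(-s)}v$ of $F_{p+1}V$ I apply the iterate formula with $a=X_i$, $N=-s$, $b=v$. The terms $(X_i)_{(-s-j)}(v_{(m+j)}w)$ are handled by the inductive hypothesis on the inner product together with the degree-raising of the outer negative mode, and the terms $v_{(-s+m-j)}((X_i)_{(j)}w)$ by the case $p=1$ followed by the inductive hypothesis; separating $m<0$ from $m\ge0$ yields precisely \eqref{eq:psi:1-2} and \eqref{eq:psi:bl-1_prf}.

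Part (2) is then immediate: expanding $\Hop{}{1}$ and $\Hop{}{2}$ as in \eqref{eq:wtA:Hopn}--\eqref{eq:wtA:Hop1}, the leading term ($a_{(-1)}b$, resp.\ $a_{(-2)}b$) lies in $F_{p+q}V$ by \eqref{eq:psi:1-2}, while every correction $a_{(j-n)}b$ with $j\ge n$ carries a non-negative mode and hence lies in $F_{p+q-1}V$ by \eqref{eq:psi:bl-1_prf}; here I use that $c(1,2)=0$, so $a\Hop{}{2}b$ has no $a_{(-1)}b$ contribution. For parts (3) and (4) the plan is a chain of congruences modulo $F_{p+q+r-1}V$. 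On the one hand, replacing each $\Hop{}{n}$-product by its leading $(n)$-product costs only error terms which, by part (2) and \eqref{eq:psi:bl-1_prf}, already lie in $F_{p+q+r-1}V$; this reduces \eqref{eq:psi:bl-21}, \eqref{eq:psi:bl-23} (resp.\ \eqref{eq:psi:3-1}, \eqref{eq:psi:3-3}) to statements about iterated $(n)$-products. On the other hand, the middle identities \eqref{eq:psi:bl-22} and \eqref{eq:psi:3-2} follow from the iterate formula with $(N,K)=(-1,-1)$ and $(-1,-2)$: using $(-1)^j\binom{-1}{j}=1$, all terms except the one or two displayed leading ones contain an inner non-negative mode and hence fall into $F_{p+q+r-1}V$ by part (1).

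The main obstacle I anticipate is part (1), and specifically the bookkeeping of its inductive step, where one must maintain \emph{both} bounds simultaneously (the general $F_{p+q}V$ and the sharper $F_{p+q-1}V$ for $m\ge0$) and check that every term produced by the iterate and commutator formulas respects them. The one and only place where the hypothesis \eqref{eq:psi:Xi-assump} is used is in guaranteeing that contractions of generators remain in $F_1V$; once that input is isolated, the remainder is careful index tracking, and parts (2)--(4) are formal consequences of part (1).
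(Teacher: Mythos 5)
Your proposal is correct and takes essentially the same route as the paper's own proof: part (1) via the same double induction (the commutator formula \eqref{eq:VA:commutator} for the $p=1$ base, the $m=0$ specialization of the Borcherds identity \eqref{eq:VA:bor} for the inductive step, with \eqref{eq:psi:Xi-assump} entering exactly through contractions of generators and with the same observation that non-negative modes of $\pd^n X_i$ are scalar multiples of non-negative modes of $X_i$ or vanish), and parts (2)--(4) via the same expansions \eqref{eq:wtA:Hopn}--\eqref{eq:wtA:Hop1} (including the needed remark $c(1,2)=0$) together with the iterate formula at $(N,K)=(-1,-1)$ and $(-1,-2)$. The only cosmetic difference is that you carry the two bounds \eqref{eq:psi:1-2} and \eqref{eq:psi:bl-1_prf} through one simultaneous induction, while the paper disposes of \eqref{eq:psi:1-2} first; this changes nothing of substance.
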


\begin{proof}
For simplicity, we denote $F_p \ceq F_pV$ for each $p \in \bbN$.
\begin{enumerate}
\item 
Relation \eqref{eq:psi:1-2} easily follows from 
the definition \eqref{eq:psi:PBW-F} of $\{F_n\}_{n=0}^\infty$, and we omit the details.

For \eqref{eq:psi:bl-1_prf}, the case $p=0$ is obvious since $F_0=\bbC\vac$.
We show the general case by the induction on $p>0$.
Consider the case $p=1$.
Since the formula \eqref{eq:VA:pd-com} implies 
\begin{align}\label{eq:psi:F1}
 F_1 = \Span_{\bbC}\{\pd^nX_i,\vac \mid i \in I, \, n\in\bbN\},
\end{align}
it is enough to prove 
\begin{align}\label{eq:psi:bl-11}
 (\pd^n X_i)_{(m)}F_q \subset F_q
\end{align}
for any $m,n,q \in \bbN$ and $i \in I$.
We show it by the induction on $q \in \bbN$.
Note that, for any $a \in V$ and $m,n \in \bbN$ with $m<n$, we have
\begin{align}\label{eq:psi:binom=0}
 (\pd^n a)_{(m)} = (-1)^m \binom{m}{n} n! a_{(m-n)} = 0.
\end{align}
Then, we have $(\pd^n X_i)_{(m)}\vac=0$, so \eqref{eq:psi:bl-11} holds for $q=0$. 
Next, for $q\in\bbZ_{>0}$, the commutator formula \eqref{eq:VA:com-form} implies that,
for any $v \in F_{q-1}$, $m,n\in\bbN$, $n_1\in\bbZ_{>0}$, and $i,i_1 \in I$, we have 
\begin{align*}
 (\pd^n X_i)_{(m)}(X_{i_1})_{(-n_1)}v 
 &= 
 \Bigl[(X_{i_1})_{(-n_1)}(\pd^nX_i)_{(m)} + 
  \sum_{k\ge0}\binom{m}{k}\bigl((\pd^nX_i)_{(k)}X_{i_1}\bigr)_{(m-n_1-k)}\Bigr]v \\
 &=
 (X_{i_1})_{(-n_1)}(\pd^nX_i)_{(m)}v + 
  \sum_{k=n}^m\binom{m}{k}\binom{-k}{n}n! \bigl((X_i)_{(k-n)}X_{i_1}\bigr)_{(m-n_1-k)}v,
\end{align*}
where, in the second equality, we used a similar argument to \eqref{eq:psi:binom=0}
to restrict the range of $k$ to $n \le k \le m$.
Now the induction hypothesis implies $(\pd^nX_i)_{(m)}v \in F_{q-1}$, and
the definition \eqref{eq:psi:PBW-F} yields $(X_{i_1})_{(-n_1)}(\pd^nX_i)_{(m)}v \in F_q$.
Also, by the assumption \eqref{eq:psi:Xi-assump} and the inequality $k-n\ge 0$, 
we have $(X_i)_{(k-n)}X_{i_1} \in F_1$, so it is a linear span of $\vac$ and 
$(X_{i_2})_{(-n_2)}\vac$ for some $i_2 \in I$ and $n_2\in\bbZ_{>0}$.
We have $(\vac)_{(m-n_1-k)}v \in F_l$ obviously.
Also, the term $\bigl((X_{i_2})_{(-n_2)}\vac\bigr)_{(m-n_1-k)}v$ 
is proportional to $(\pd^{n_2-1}X_j)_{(N)}v$ with $N\ceq m-n_1-k-n_2+1$, 
which belongs to $F_{q-1}$ ($\subset F_q$) for $N\ge0$ by the induction assumption,
and belongs to $F_q$ for $N<0$ by the definition \eqref{eq:psi:PBW-F}.
Hence \eqref{eq:psi:bl-11} holds for any $q \in \bbN$,
and \eqref{eq:psi:bl-1_prf} holds for $p=1$.

Next, we consider the induction step $p>1$ of \eqref{eq:psi:bl-1_prf}.
It is enough to show $a_{(m)}c \in F_{p+q-1}$ for $a \in F_p$ and $c \in F_q$.
We can assume 
\[
 a=(\pd^{n_1}X_{i_1})_{(-1)}b \in F_p, \quad 
 b=(\pd^{n_2}X_{i_2})_{(-1)}\dotsb(\pd^{n_p}X_{i_p})_{(-1)}\vac \in F_{p-1}
\]
with $n_1,\dotsc,n_p\in\bbN$ and $i_1,\dotsc,i_p\in I$.
By the Borcherds identity \eqref{eq:VA:bor}, we have
\begin{align*}
 a_{(m)}c = \bigl((\pd^{n_1}X_{i_1})_{(-1)}b\bigr)_{(m)}c = 
 \sum_{k\ge0}\bigl((\pd^nX_i)_{(-1-k)}b_{(m+k)}c+p(X_i,a)b_{(m-k-1)}(\pd^nX_i)_{(k)}c\bigr).
\end{align*}
Then, we have $b_{(m+k)}c \in F_{p+q-2}$ by the induction hypothesis, and 
$(\pd^nX_i)_{(-1-k)}b_{(m+k)}c \in F_{p+q-1}$ by \eqref{eq:psi:F1} and \eqref{eq:psi:1-2}.
Also, we have $(\pd^nX_i)_{(k)}c \in F_q$ by \eqref{eq:psi:bl-11}, 
and $b_{(m-k-1)}(\pd^nX_i)_{(k)}c \in F_{p+q-1}$ by \eqref{eq:psi:1-2}. 
Hence we have $a_{(m)}c \in F_{p+q-1}$, and the induction step is proved.
Therefore, we have shown \eqref{eq:psi:bl-1_prf}.

\item 
This is a corollary of \eqref{eq:psi:1-2}, \eqref{eq:psi:bl-1_prf} 
and the formulas \eqref{eq:wtA:Hop1}, \eqref{eq:wtA:Hopn}: 
\begin{align}
\label{eq:psi:a1b}
&a\Hop{}{1}b
=a_{(-1)}b+\sum_{k\ge0}c_ka_{(k)}b, \quad 
 c_k \ceq B_{k+1}^+/(k+1)! \in \bbQ, \\
\label{eq:psi:a2b}
&a\Hop{}{2}b=a_{(-2)}b+\sum_{k\ge0}d_ka_{(k)}b, \quad d_k\in\bbQ.
\end{align}
Note that the therm $a_{(-1)}b$ does not appear in \eqref{eq:psi:a2b}.

\item
For \eqref{eq:psi:bl-21}, 
we have $b\Hop{}{1} c = b_{(-1)}c+v$ with 
$v \ceq \sum_{k\ge0}c_kb_{(k)}c$, 
where we used \eqref{eq:psi:a1b}.
Then, we have $b_{(-1)}c \in F_{q+r}$ and $v \in F_{q+r-1}$ by \eqref{eq:psi:bl-1_prf}.
Hence, by applying \eqref{eq:psi:bl-1_prf} repeatedly, we have 
\begin{align*}
a \Hop{}{1} (b \Hop{}{1} c) 
&= a_{(-1)}(b_{(-1)}c + v) 
 +\sum_{k\ge0}c_k a_{(k)}\bigl(b_{(-1)}c+v\bigr) \\ 
&\in
 a_{(-1)}b_{(-1)}c + a_{(-1)}F_{q+r-1}
 +\sum_{k\ge0}a_{(k)}(F_{q+r}+F_{q+r-1}) \\
&\subset
 a_{(-1)}b_{(-1)}c+F_{p+q+r-1}.
\end{align*}
Thus we have \eqref{eq:psi:bl-21}. The next equality \eqref{eq:psi:bl-22} follows from 
%the special case 
%$(a_{(-1)}b)c=\sum_{k\ge0}(a_{(-1-k)}b_{-1+k}+p(a,b)b_{(-2-k)}a_{(k)})c$ of 
the Borcherds identity \eqref{eq:VA:bor} 
together with \eqref{eq:psi:bl-1_prf}.
The last equality \eqref{eq:psi:bl-23} can be shown similarly to \eqref{eq:psi:bl-21}, 
and we omit the details.

\item
The formula \eqref{eq:psi:a2b} implies 
\begin{align*}
  (a\Hop{}{1}b)\Hop{}{2}c + F_{p+q+r-1} 
&=(a\Hop{}{1}b)_{(-2)}c   + \sum_{k\ge0}d_k(a\Hop{}{1}b)_{(k)}c+F_{p+q+r-1}. \\
\intertext{
 Now, we have $a\Hop{}{1}b \in F_{p+q}$ by \eqref{eq:psi:1-2}, 
 and $(a\Hop{}{1}b)_{(k)}c \in F_{p+q+r-1}$ by \eqref{eq:psi:bl-1_prf}.
 Then, by \eqref{eq:psi:a1b} and a similar argument, we have}
&=(a_{(-1)}b)_{(-2)}c + \sum_{k\ge0}c_k(a_{(k)}b)_{(-2)}c+F_{p+q+r-1} \\ 
&=(a_{(-1)}b)_{(-2)}c +F_{p+q+r-1}. \\
\intertext{
 Hence we obtained \eqref{eq:psi:3-1}.
 Next, using the Borcherds identity \eqref{eq:VA:bor}, we have}
&=\sum_{k\ge0}
  \bigl(a_{(-1-k)}b_{(-2+k)}c+p(a,b)b_{(-3-k)}a_{(k)}c\bigr)+F_{p+q+r-1} \\
\intertext{
 By \eqref{eq:psi:bl-1_prf} and \eqref{eq:psi:1-2}, 
 the terms $a_{(-1-k)}b_{(-2+k)}c$ with $k\ge2$ and 
 $b_{(-3-k)}a_{(k)}c$ with $k \ge 0$ belongs to $F_{p+q+r-1}$. Hence,}
&=a_{(-1)}b_{(-2)}c+a_{(-2)}b_{(-1)}c+F_{p+q+r-1}, 
\end{align*}
which is \eqref{eq:psi:3-2}.
Finally, an argument similar to \eqref{eq:psi:bl-21} yields \eqref{eq:psi:3-3},
the details of which are omitted.
\end{enumerate}
\end{proof}

\begin{lem}\label[lem]{lem:psi:wtO-Ker}
With the notation of \cref{prp:Ngen:varphi}, 
for any $k \ge 0$, the following statements hold.
\begin{enumerate}
\item 
$W_k \subset \Ker\psi$.
%\item 
%$\gr_k\wtO \supset W_k$.
\item
$\gr_k\wtO \subset W_k$.
\end{enumerate}
In particular, \eqref{eq:psi:wtO-Ker} holds.
\end{lem}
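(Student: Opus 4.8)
The plan is to do everything inside the associated graded $\gr V$, which by \cref{prp:N=124:varphi} is the free supercommutative algebra on the symbols $\pd^n X_i + \Flow$, with product induced by $\Hop{}{1}$ (equivalently by the $(-1)$-product, since these agree mod lower filtration by \eqref{eq:psi:bl-22}). Write $J \subseteq \gr V$ for the span of those monomials in the $\pd^n X_i + \Flow$ that contain at least one factor with $n \ge 1$. By the defining formula \eqref{eq:wtO-gen} and the iterated use of \eqref{eq:psi:bl-21}, one has $W_k = J \cap \gr_k V$; moreover $\psi$ sends $\pd^n X_i + \Flow \mapsto \delta_{n0} x_i + \Flow$, so $J \subseteq \Ker\psi$. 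This already settles (1): a generator of $W_k$ is the class of an iterated $\Hop{}{1}$-product of elements $\pd^{n_j} X_{i_j}$ ($1 \le j \le k$) with $\sum_j n_j > 0$, which by \eqref{eq:psi:bl-21} equals the free monomial $\prod_j (\pd^{n_j} X_{i_j} + \Flow)$ in $\gr_k V$; as $\psi$ is an algebra morphism and some $n_j > 0$, the factor $\psi(\pd^{n_j} X_{i_j} + \Flow) = \delta_{n_j 0} x_{i_j} + \Flow$ vanishes, so $W_k \subseteq J \subseteq \Ker\psi$.

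For (2) I would reformulate the claim. Set $\wtO^{(k)} \ceq \Span_{\bbC}\{a\Hop{}{2}b \mid a \in F_pV,\ b \in F_qV,\ p+q \le k\} \subseteq F_kV \cap \wtO$, and prove by induction the sharper statement $Q(k)\colon F_kV \cap \wtO = \wtO^{(k)}$. Granting $Q(k)$, the space $\gr_k\wtO$ is spanned by the leading symbols of the generators $a\Hop{}{2}b$ with $p+q = k$. Using \eqref{eq:psi:a2b}, the fact that $a_{(j)}b \in F_{p+q-1}V$ for $j \ge 0$ (\eqref{eq:psi:bl-1_prf}), and the identity $a_{(-2)}b = (\pd a)_{(-1)}b$ coming from \eqref{eq:VA:pd-com}, such a leading symbol equals $\bar\pd(\operatorname{symb} a)\cdot(\operatorname{symb} b)$, where $\bar\pd$ is the derivation of $\gr V$ induced by $\pd$. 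Since $\bar\pd$ always produces a factor with a derivative, this lies in $J \cap \gr_k V = W_k$, so $\gr_k\wtO \subseteq W_k$ and hence \eqref{eq:psi:wtO-Ker} follows.

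The crux — and the step I expect to be the main obstacle — is the inductive step of $Q(k)$, namely $\wtO^{(k)} \cap F_{k-1}V \subseteq \wtO^{(k-1)}$. The difficulty is the familiar point that the symbols of a spanning set need not span the associated graded: a combination $w = \sum_\alpha a_\alpha \Hop{}{2} b_\alpha$ of degree-$\le k$ generators can fall into $F_{k-1}V$ through cancellation of top symbols, and one must show it is then already a combination of strictly lower-degree generators. To handle this I would straighten each top-degree generator $a_\alpha \Hop{}{2} b_\alpha$ into a sum of $W_k$-type iterated $\Hop{}{1}$-products modulo $\wtO^{(k-1)}$, peeling off one generator at a time via \eqref{eq:psi:3-3} together with the single-generator identity $X \Hop{}{2} c = (\pd X)\Hop{}{1} c$ (the case $n=1$ of \eqref{eq:wtA:pd-Hop}); the differences created at each step lie in $\wtO \cap F_{k-1}V$, which equals $\wtO^{(k-1)}$ by the inductive hypothesis. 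The hypothesis $\sum_\alpha \operatorname{symb}_k(a_\alpha \Hop{}{2} b_\alpha) = 0$ then reads as a linear relation among the basis monomials of the free algebra $\gr V$, forcing the coefficients to cancel monomial-by-monomial; absorbing the remaining lower-filtration terms into $\wtO^{(k-1)}$ (again by induction) yields $w \in \wtO^{(k-1)}$.

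The base case $k=0$ is immediate, since $F_0V = \bbC\vac$ and $\vac \notin \wtO$ (its class is the unit of $\wtA \neq 0$), so $\gr_0\wtO = 0 = W_0$. Combining the two parts gives the chain $\gr_k\wtO \subseteq W_k \subseteq \Ker\psi$ for all $k \ge 0$, which is exactly \eqref{eq:psi:wtO-Ker}; feeding this back into \cref{prp:N=124:varphi} makes $\psi$ descend to the inverse $\ol\psi$ of $\gr\varphi$ and thereby proves the isomorphisms of \cref{thm:N=1,thm:N=2,thm:N=4,thm:N=3,thm:big4}.
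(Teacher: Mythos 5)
Your part (1) is exactly the paper's argument: by \eqref{eq:psi:bl-21} the map $\psi$ is a morphism of supercommutative algebras on $\gr V$, and it kills any monomial containing a factor $\pd^{n}X_i$ with $n>0$. Your symbol computation for the generators of $\wtO$ in part (2) is also sound and is a clean variant of what the paper does: you use $a\Hop{}{2}b\equiv(\pd a)_{(-1)}b \bmod F_{p+q-1}V$ (from \eqref{eq:psi:a2b} and \eqref{eq:VA:pd-com}) to get $\mathrm{symb}(a\Hop{}{2}b)=\bar\pd(\mathrm{symb}\,a)\cdot\mathrm{symb}\,b \in W_{p+q}$ in one stroke, where the paper instead proves \eqref{eq:psi:wtO-gen-claim} by induction on the length $p$ of the first factor, using $a\Hop{}{2}b=(\pd a)\Hop{}{1}b$ \eqref{eq:psi:Hop2-Hop1} and \eqref{eq:psi:3-1}--\eqref{eq:psi:3-3}. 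These are equivalent in content.

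The genuine gap is in the step you yourself flagged as the crux, the no-cancellation statement $Q(k)\colon F_kV\cap\wtO=\wtO^{(k)}$, and it is twofold. First, your induction is not well-founded: to prove $Q(k)$ you must start from \emph{some} presentation $w=\sum_\alpha a_\alpha\Hop{}{2}b_\alpha$ of $w\in F_kV\cap\wtO$, and nothing bounds the presented degree $M=\max_\alpha(p_\alpha+q_\alpha)$ by $k$. Descending from $M$ to $k$ requires the step $\wtO^{(m)}\cap F_{m-1}V\subseteq\wtO^{(m-1)}$ at every level $m\le M$, while your proof of that step invokes $Q(m-1)$, whose proof by the same scheme again needs the descent step at all levels $\ge m$; the recursion never terminates. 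Second, the absorption of errors: the identities you use to straighten (\eqref{eq:psi:3-3}, reordering and reassociation via \cref{thm:wtA:wtO} (4), (5), and comparison of two iterated products with the same symbol) are established in the paper only as congruences modulo $F_{p+q+r-1}V$. The resulting error terms lie in $\wtO\cap F_{k-1}V$ merely \emph{set-theoretically} (each individual term is in $\wtO$ by the ideal property), with no presentation of total degree $\le k-1$ attached; so the phrase ``which equals $\wtO^{(k-1)}$ by the inductive hypothesis'' is invoking precisely the statement under proof at that level. A real repair must strengthen the auxiliary lemmas so that every error comes with an explicit presentation in $\wtO^{(m-1)}$ (this is the bookkeeping carried out in the corresponding statement \cite[Lemma 3.21]{DK} for the original Zhu algebra), or avoid $Q(k)$ altogether.

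For fairness: the paper itself disposes of this point in a single unproved line --- its item (ii) rests on the asserted equality $\wtO\cap F_lV=\sum_{k_1+k_2\le l}(F_{k_1}V)\Hop{}{2}(F_{k_2}V)$, whose nontrivial inclusion is exactly your $Q(k)$. So you correctly isolated the one delicate point that the paper glosses over, but your sketch does not actually close it; as it stands, your proof of (2) is conditional on an unestablished lemma, just as the paper's is.
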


\begin{proof}
For simplicity, we denote $F_p \ceq F_pV$ for $p\in\mathbb{N}$ as before.
Note that \eqref{eq:wtA:pd-Hop} implies 
\begin{align}\label{eq:psi:Hop2-Hop1}
 a\Hop{}{2}b = (\pd a)\Hop{}{1} b \quad (a,b \in V).
\end{align}
\begin{enumerate}
\item 
The equality \eqref{eq:psi:bl-21} implies that 
the map $\psi\colon \gr V \to \gr U$ is a morphism of supercommutative algebras, 
and we have
\begin{align*}
 \psi(\pd^{n_1}X_{i_1}\Hop{}{1} \cdots \Hop{}{1} \pd^{n_k}X_{i_k}+F_{\low})
 =\psi(\pd^{n_1}X_{i_1})\psi(\pd^{n_2}X_{i_2})\cdots\psi(\pd^{n_k}X_{i_k})
\end{align*}
for any $n_1,\dotsc,n_k\ge0$ and $i_1,\dotsc,i_k \in I$.
Then, if $\sum_{j=1}^k n_j>0$, we have $n_j>0$ for some $j \in \{1,\dotsc,k\}$, 
and for this $j$, we have $\psi(\pd^{n_j}X_{i_j})=0$ 
by the definition \eqref{eq:N=124:psi-asn} of $\psi$.
Thus, we have 
\begin{align*}
 \psi(\pd^{n_1}X_{i_1}\Hop{}{1} \cdots \Hop{}{1} \pd^{n_k}X_{i_k}+F_{\low})=0,
\end{align*}
which implies the statement.

%\item 
%Take $j \in \{1,\dotsc,k\}$ such that $n_j>0$. 
%By \eqref{eq:psi:Hop2-Hop1} and \eqref{eq:psi:bl-21}, we have
%\begin{align*}
%&\pd^{n_1}X_{i_1}\bl \cdots \bl \pd^{n_k}X_{i_k} + F_{\low}\\
%&=\pd^{n_1}X_{i_1}\bl \cdots \bl \pd^{n_k}X_{i_k}\bl\vac + F_{\low}\\
%&=(\pd^{n_1}X_{i_1}\bl \cdots) \bl 
% \{\pd^{n_j} X_{i_j}\Hop{\gamma=1}{1} (\cdots\bl\vac)\}+ F_{\low}\\
%&=(\pd^{n_1}X_{i_1}\bl \cdots) \bl 
% \{\pd^{n_j-1} X_{i_j}\Hop{\gamma=1}{2} (\cdots\bl\vac)\}+ F_{\low}\\
%&=(\pd^{n_1}X_{i_1}\bl \cdots \bl \pd^{n_j-1} X_{i_j})
% \Hop{\gamma=1}{2} (\cdots\bl\vac)+ F_{\low}\in \gr_k\wtO.
%\end{align*}

\item
We note the following two statements.
\begin{clist}
\item\label{i:psi:i}
$\gr_kV \subset
 \Span\{\pd^{n_1}X_{i_1}\Hop{}{1}\dotsb\Hop{}{1}\pd^{n_k}X_{i_k}+F_{\low} \mid 
 i_j\in I, \ n_j\ge 0 \ (j=1,\dotsc,k) \}$.
\item\label{i:psi:ii}
$\gr_k\wtO \subset \sum_{k_1+k_2=k}(\gr_{k_1}V)\Hop{}{2}(\gr_{k_2}V)$.
\end{clist}
The item \ref{i:psi:i} is a corollary of \eqref{eq:psi:bl-21},
and the item \ref{i:psi:ii} follows from 
$\wtO \cap F_l = \sum_{k_1+k_2 \le l}(F_{k_1}V)\Hop{}{2}(F_{k_2}V)$.
Hence, it suffices to show 
\begin{align}\label{eq:psi:wtO-gen-claim}
 (\pd^{m_1}X_{i_1}\Hop{}{1} \dotsb \Hop{}{1}\pd^{m_p}X_{i_p}\Hop{}{1}\vac)  
 \Hop{}{2}
 (\pd^{n_1}X_{j_1}\Hop{}{1} \dotsb \Hop{}{1}\pd^{n_q}X_{j_q}\Hop{}{1}\vac)
+ F_{\low}\in W_{p+q}
\end{align}
for $p,q\in\bbN$, $m_1,\dotsc,m_p,n_1,\dotsc,n_q\in\bbN$
and $i_1,\dotsc,i_p,j_1,\dotsc,j_q\in I$.
The case of $p=0$ follows from 
\[
 \vac\Hop{}{2}(\cdots) = (\pd\vac)\Hop{}{1}(\cdots) = 
 0\Hop{}{1}(\cdots)=0,
\]
where we used \eqref{eq:psi:Hop2-Hop1}.
Next, the case of $p=1$ follows from
\begin{align*}
 \pd^nX_i\Hop{}{2}(\cdots)+F_{\low}=\pd^{n+1}X_i\Hop{}{1}(\cdots)+F_{\low} \in W_{1+q},
\end{align*} 
which is a consequence of \eqref{eq:psi:Hop2-Hop1}.
The case $p>1$ follows from 
\begin{align*}
%\label{eq:psi:2-21}
 (a\Hop{}{1}b)\Hop{}{2}c+F_\low
%&=(a_{(-1)}b)_{(-2)}c+F_\low\\
%\label{eq:psi:2-22}
%&=\sum_{l=0}^{\infty}
%\left(a_{(-1-l)}b_{(-2+l)}+(-1)^{p(a,b)}b_{(-3-l)}a_{(l)}\right)c+F_\low\\
%\label{eq:psi:2-23}
%&=a_{(-1)}b_{(-2)}c+a_{(-2)}b_{(-1)}c+F_\low\\
%\label{eq:psi:2-24}
&=a\Hop{}{1}(b\Hop{}{2}c)
 +a\Hop{}{2}(b\Hop{}{1}c)+F_\low,
\end{align*}
for $a=\pd^{m_1} X_{i_1}$, $b=(\pd^{m_2} X_{i_2})\Hop{}{1}\dotsb\Hop{}{1}(\pd^{m_p} X_{i_p})$
and $c=(\pd^{n_1} X_{j_1})\Hop{}{1}\dotsb\Hop{}{1}(\pd^{n_q} X_{i_q})$, 
which is the consequence of \eqref{eq:psi:3-1}--\eqref{eq:psi:3-3}, 
together with the obvious 
\begin{align*}
 \pd^{n}X_{i}\Hop{}{1} W_{k-1}\subset W_{k} \quad(n\in\bbN, \, i \in I).
\end{align*}
%In the above calculation, we used the associativity formula
%\begin{align*}
%(a_{(p)}b)_{(q)}
%=\sum_{l=0}^{\infty}(-1)^{l}\binom{p}{l}
% \left(a_{(p-l)}b_{(q+l)}-(-1)^p p(a,b)b_{(p+q-l)}a_{(l)}\right)\quad(p,q\in\mathbb{Z}).
%\end{align*}
\end{enumerate}
\end{proof}

\begin{rmk}
For the relations \eqref{eq:psi:wtO-Ker}, we actually have $\gr_k\wtO=W_k$ 
and $\bigoplus_k\gr_k\wtO=\bigoplus_kW_k=\Ker\psi$.
See \cite[Lemma 3.21 (a)]{DK} 
for the corresponding statement for the original Zhu algebra $A(V)=\Zhu_H(V)$.
\end{rmk}

%%%%%%%%%%%%%%%%%%%%%%%%%%%%%%%%%%%%%%%%%%%%%%%%%%%%%%%%%%%%%%%%%%%%%%%%%%%%%%%%%%%%%%%%%%
%%%%%%%%%%%%%%%%%%%%%%%%%%%%%%%%%%%%%%%%%%%%%%%%%%%%%%%%%%%%%%%%%%%%%%%%%%%%%%%%%%%%%%%%%%
\section{SUSY Zhu algebra}\label{s:SUSY}

%%%%%%%%%%%%%%%%%%%%%%%%%%%%%%%%%%%%%%%%%%%%%%%%%%%%%%%%%%%%%%%%%%%%%%%%%%%%%%%%%%%%%%%%%%
\subsection{\texorpdfstring{$N_K=N$}{NK=N} SUSY vertex algebras}\label{ss:SUSY:NK=N}

We cite several notations for superfields from \cite[\S4]{HK}.
\begin{itemize}
\item
Let $N \in \bbZ_{\ge1}$, and denote $[N] \ceq \{1,2,\dotsc,N\}$.

\item 
Let $Z=(z,\zeta^1,\ldots,\zeta^N)$ be an \emph{$N_K=N$ supervariable}, 
i.e., a tuple of an even indeterminate $z$ and odd indeterminates $\zeta^1,\dotsc,\zeta^N$.
We denote 
\[
 Z^{j|J} \ceq z^j \zeta^J = z^j \zeta^{j_1} \cdots \zeta^{j_r} \quad  
 (j \in \bbZ, \, J = \{j_1 \le \dotsb \le j_r\} \subset [N]).
\]
Given two $N_K=N$ supervariables 
$Z_i=(z_i,\zeta_i^1,\dotsc,\zeta_i^N)$ ($i=1,2$), 
we have a group structure (see also \cite[Chap.\ 2, 7.3]{Ma}):
\begin{align}\label{eq:NK=1:+}
%  Z_1 + Z_2 \ceq 
% (z_1+z_2+\tsum_{k=1}^N\zeta_1^k\zeta_2^k,\zeta_1^1+\zeta_2^1,\ldots,\zeta_1^N+\zeta_2^N), \quad 
% -Z_1 \ceq (-z_1,-\zeta_1^1,\ldots,-\zeta_1^N).
 Z_1 - Z_2 = 
 (z_1-z_2-\tsum_{k=1}^N\zeta_1^k\zeta_2^k,\zeta_1^1-\zeta_2^1,\ldots,\zeta_1^N-\zeta_2^N).
\end{align}
%In particular, we have 
%$Z_1 - Z_2 = 
% (z_1-z_2-\tsum_{k=1}^N\zeta_1^k\zeta_2^k,\zeta_1^1-\zeta_2^1,\ldots,\zeta_1^N-\zeta_2^N)$.
Hence 
$(Z_1 - Z_2)^{j|J} \ceq (z_1-z_2-\tsum_{k=1}^N\zeta_1^k\zeta_2^k)^j
 (\zeta_1^{j_1}-\zeta_2^{j_1}) \cdots (\zeta_1^{j_r}-\zeta_2^{j_r})$
for  $j \in \bbZ$ and $J = \{j_1,\ldots,j_r\} \subset [N]$.
In particular, we have
$(Z_1 - Z_2)^{-1|0} = \frac{1}{z_1-z_2}+\frac{\sum_{k=1}^N\zeta_1^k\zeta_2^k}{(z_1-z_2)^2}$
and $(Z_1 - Z_2)^{-1|[N]} = \frac{\prod_{k=1}^N(\zeta_1^k-\zeta_2^k)}{z_1-z_2}$.

%\item
%The formal delta function for $N_K=1$ supervariables is defined by 
%\begin{align*}
% \delta(Z_1,Z_2) \ceq (i_{z_1,z_2}-i_{z_2,z_1})(Z_1-Z_2)^{-1|1} = 
% (i_{z_1,z_2}-i_{z_2,z_1})\frac{\zeta_1-\zeta_2}{z_1-z_2}
%\end{align*}
%where $i_{z_1,z_2}$ denotes the expansion in the domain $\abs{z_1}>\abs{z_2}$.
%So we have 
%\begin{align*}
% \delta(Z_1,Z_2) &= (\zeta_1-\zeta_2) \sum_{n\in\bbZ}\frac{z_2^n}{z_1^{n+1}} = 
% (\zeta_1-\zeta_2) z_1^{-1}\delta(z_2/z_1) \\
% &= (\zeta_1-\zeta_2) \sum_{n\in\bbZ}\frac{z_1^n}{z_2^{n+1}} = 
% (\zeta_1-\zeta_2) z_1^{-1}\delta(z_1/z_2),
%\end{align*}
%where we used $\delta(z) \ceq \sum_{n \in \bbZ}z^n$ for an even variable $z$.

\item 
For an $N_K=N$ supervariable $Z=(z,\zeta^1,\ldots,\zeta^N)$, we denote
\begin{align*}
 \sd_Z^k \ceq \pd_{\zeta^k}+\zeta^k \pd_z \quad (k \in [N]), \quad 
 \sd_Z^{j|J} \ceq \pd_z^j \sd_Z^{j_1} \cdots \sd_Z^{j_r}  \quad 
 (j \in \bbN, \, J =\{j_1 \le \dotsb \le j_r\} \subset [N]).
\end{align*}
%By \cite[Lemma 4.1]{HK}, we have
%\begin{align*}
% \frac{1}{j!} \sd_{Z_2}^{j|J}\delta(Z_1,Z_2) = (i_{z_1,z_2}-i_{z_2,z_1})(Z_1-Z_2)^{-1-j|1-J} 
% \quad (j \in \bbN, \, J \in \{0,1\}).
%\end{align*}

\item %\label{i:NK=1:VA:Cousin}
For a linear superspace $V$, we set 
\begin{align*}
 V\dbr{Z} \ceq V\dbr{z}[\zeta] &= 
 \{v(Z)=\tsum_{j \in \bbN, \, J \subset [N]} Z^{j|J}v_{j|J} \mid v_{j|J} \in V \}, \\
 V\dbr{Z^{\pm1}} \ceq V\dbr{z^{\pm1}}[\zeta] &= 
 \{v(Z)=\tsum_{j \in \bbZ, \, J \subset [N]} Z^{j|J}v_{j|J} \mid v_{j|J} \in V \}, \\
 V\dpr{Z} \ceq V\dpr{z}[\zeta] &= 
 \{v(Z) \in V\dbr{Z^{\pm1}} \mid v_{j|J} =0 \text{ for } j \ll 0 \}.
\end{align*}
These are linear superspaces, 
where the parity of $Z^{j|J}v_{j|J}$ is set to be $\ol{r}+\ol{v_{j|J}} \in \Zt$
for $j \in \bbZ$, $J=\{j_1 \le \dotsb \le j_r\} \subset [N]$ and $v_{j|J} \in V$.

\item 
For a linear superspace $V$, we define the super residue 
$\sres_Z[\cdot]\delta Z\colon V\dbr{Z^{\pm1}} \to V$ by
\begin{align*}
 \sres_Z[v(Z)]\delta Z \ceq v_{-1|[N]}, \quad 
 (v(Z)=\tsum_{j \in \bbZ, \, J \subset [N]} Z^{j|J}v_{j|J} \in V\dbr{Z^{\pm1}}).
\end{align*}
\end{itemize}

As introduced in \cite[Definition 4.13]{HK}, an $N_K=N$ supersymmetric (SUSY for short) 
vertex algebra is a data $(\bbV,Y,\vac,\sd^1,\dotsc,\sd^N)$ consisting of 
\begin{itemize}
\item 
a linear superspace $\bbV=\bbV^{\ev} \oplus \bbV^{\od}$, 
\item
an even linear map $\bbV \to (\End \bbV)\dbr{Z^{\pm1}}$,
$a \mto Y(a,Z) = \sum_{j \in \bbZ, \, J \subset [N]} Z^{-1-j|[N]-J}a_{(j|J)}$, 
with each $Y(a,Z)$ called the \emph{superfield}, 
\item
a non-zero even element $\vac \in \bbV^{\ev}$, called the \emph{vacuum}, 
\item
odd operators $\sd^1,\dotsc,\sd^N \in (\End \bbV)^{\od}$, called the \emph{odd translation operators},
\end{itemize}
satisfying the following conditions for any $a,b \in \bbV$. 
\begin{clist}
\item  
(field condition) 
$Y(a,Z)b \in \bbV\dpr{Z}$. 

\item
(vacuum axiom)
$Y(\vac,Z)=\id_\bbV$ and $Y(a,Z)\vac = a + O(Z)$.

\item 
(translation axiom)
$[\sd^k,Y(a,z)] = (\pd_{\zeta^k}-\zeta^k\pd_z) Y(a,z)$ for any $k \in [N]$.
%where $[,]$ denotes the commutator in $\End \bbV$.
%The maps $\sd^k$ are called the \emph{odd translations} of $\bbV$.

\item 
(locality axiom)
The superfields $Y(a,Z)$ and $Y(b,W)$ are local, i.e., there exists $M(a,b) \in \bbN$ such that 
$(z-w)^{M(a,b)}[Y(a,Z),Y(b,W)]=0$. 
%where $[,]$ denotes the commutator in $(\End \bbV)\dbr{Z^{\pm1},W^{\pm1}}$.
\end{clist}
We will use the standard notation $a(Z) \ceq Y(a,Z)$ for $a \in \bbV$.

Below are some implications of the axioms of $N_K=N$ SUSY vertex algebras.
\begin{itemize}
\item 
%The translation axiom is equivalent to 
%\begin{align*}
% [S,a_{(j|0)}] = ja_{(j-1|1)}, \quad [S,a_{(j|1)}] = a_{(j|0)}.
%\end{align*}
The translation axiom implies \cite[Corollary 4.18]{HK}: 
\begin{align}\label{eq:NK:sd-a}
 (\sd^k a)(Z) = \sd_Z^k a(Z), \quad 
 \sd^k(a_{(j|J)}b) = (-1)^{N-J}\bigl((\sd^ka)_{(j|J)}b+(-1)^{\ol{a}}a_{(j|J)}(\sd^kb)\bigr)
\end{align}.
for $k \in [N]$, $a,b \in \bbV$, $j\in\bbZ$ and $J \subset [N]$.
%which is equivalent to
%$(Sa)_{(j|0)} = -ja_{(j-1|1)}, \quad (Sa)_{(j|1)} = a_{(j|0)}$.

%\item 
%The odd translation operator $\sd^k$ is recovered as
%\begin{align*}
% \sd^k a = a_{(-1|N-e_k)}\vac.    
%\end{align*}

\item %\label{i:rmk:NK=1:T}
All the operators $(\sd^k)^2 \in \End \bbV$ for $k \in [n]$ coincide, and  
\begin{align}\label{eq:NK:pd}
 \pd \ceq (\sd^1)^2 = \dotsb = (\sd^N)^2 \in (\End \bbV)^{\ev}
\end{align}
satisfies the translation axiom of non-SUSY vertex algebra (see \cref{sss:VA:VA}).
%: For any $a \in V$, we have $\pd\vac=0$ and $[\pd,a(Z)] = \pd_z a(Z)$.
We call $\pd$ the \emph{even translation operator} of the SUSY vertex algebra $V$.
%Note that we have $[\pd,a(Z)]=(\pd a)(Z)$ but $[\sd,a(Z)] \ne (\sd a)(Z)$.

%\item 
%We have a natural $N_K=1$ SUSY analogue of the notion of 
%a vertex subalgebra generated by a subset. %(c.f.\ \cite[(2.4.6)]{FHL}).
\end{itemize}

%The locality condition implies the $N_K=1$ SUSY analogue of 
%the Borcherds identity \eqref{eq:VA:Bor}.
%
%\begin{lem}
%Let $V$ be an $N_K=1$ SUSY VA. Then we have ... \NB{To be completed.}
%\end{lem}
%
%\begin{proof}
%See \cite[Theorem 3.3.17, \S4.10]{HK}.
%\end{proof}

%Another implication of the locality condition is the SUSY analogue of 
%the commutator formula. %\eqref{eq:VA:com-delta}. %\eqref{eq:VA:com-res}:
%
%\begin{lem}
%Let $V$ be an $N_K=1$ SUSY VA. For $a,b \in V$, we have
%\begin{align}
% \label{eq:NK=1:com-delta}
% [a(Z_1),b(Z_2)] 
% &= \sum_{j \ge 0, \, J \in \{0,1\}} 
%    \frac{(-1)^{J(J+1)/2}}{j!}\bigl(\sd_{Z_2}^{j|J}\delta(Z_1,Z_2)\bigr) (a_{(j|J)}b)(Z_2). 
%% \\ \label{eq:NK=1:com-sres}
%% &= \sres_{Z_0} \bigl[\delta(Z_1-Z_0,Z_2)\bigl(a(Z_0)b\bigr)(Z_2)\bigr].
%\end{align}
%Moreover, for $n \in \bbZ$ and $I \in \{0,1\}$, we have 
%\begin{align}\label{eq:NK=1:com}
% [a_{(n|I)},b(Z)] = \sum_{j \ge 0, \, J \in \{0,1\}} 
% (-1)^{I+J+IJ} \frac{(-1)^{J(J+1)/2}}{j!}\bigl(\sd_Z^{j|J}Z^{n|I}\bigr) (a_{(j|J)}b)(Z).
%\end{align}
%\end{lem}
%
%\begin{proof}
%For \eqref{eq:NK=1:com-delta}, see \cite[Theorem 4.16 (3)]{HK}.
%%For \eqref{eq:NK=1:com-sres}, \NB{to be completed}.
%\cref{eq:NK=1:com} is obtained by expanding \eqref{eq:NK=1:com-delta}. 
%See \cite[(3.3.4.9), \S4.10]{HK} for details.
%\end{proof}

We can ``reduce'' a SUSY vertex algebra to a (non-SUSY) vertex algebra.

\begin{lem}\label[lem]{lem:NK:red}
Let $\bbV$ be an $N_K=N$ SUSY vertex algebra with vertex operators 
$a(Z)=a(z,\zeta^1,\dotsc,\zeta^N)$ for $a \in \bbV$.
Then the even linear map 
\begin{align*}
 \bbV \lto (\End \bbV)\dpr{z^{\pm1}}, \quad 
 a &\lmto a(z) \ceq a(z,0,\dotsc,0) = \sum_{j \in \bbZ} z^{-j-1}a_{(j|[N])}
\end{align*}
and the even translation $\pd$ gives a vertex algebra structure on $\bbV$. 
We call it the \emph{reduced part} of the $N_K=N$ SUSY vertex algebra $\bbV$,
and denote it by $\bbV_{\tred}$.
\end{lem}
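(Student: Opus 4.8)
The plan is to verify the four axioms of a (non-SUSY) vertex algebra from \cref{sss:VA:VA} for the data $(\bbV, a \mapsto a(z), \vac, \pd)$ by specializing the corresponding $N_K=N$ SUSY axioms at $\zeta^1 = \dotsb = \zeta^N = 0$. The key structural observation, which I would record first, is that in the expansion $Y(a,Z) = \sum_{j\in\bbZ,\,J\subset[N]}Z^{-1-j|[N]-J}a_{(j|J)}$ setting all odd variables to zero annihilates every term whose odd monomial $\zeta^{[N]-J}$ is nontrivial, i.e.\ retains exactly the terms with $J=[N]$; this is precisely the asserted formula $a(z)=\sum_{j\in\bbZ}z^{-j-1}a_{(j|[N])}$. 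Moreover each surviving coefficient $a_{(j|[N])}$ carries parity $\ol{a}$, so the reduced field $a(z)$ has parity $\ol{a}$ and the reduced object is honestly $\Zt$-graded, as required for a vertex superalgebra.

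The field and vacuum axioms are then immediate specializations. Since $Y(a,Z)b \in \bbV\dpr{Z}=\bbV\dpr{z}[\zeta]$, its reduction at $\zeta=0$ lies in $\bbV\dpr{z}$, which is the field condition. Likewise $Y(\vac,Z)=\id_\bbV$ and $Y(a,Z)\vac = a + O(Z)$ specialize to $Y(\vac,z)=\id_\bbV$ and $Y(a,z)\vac = a + O(z)$, giving the vacuum axiom. For the translation axiom I would invoke \eqref{eq:NK:pd}: the even operator $\pd=(\sd^1)^2$ already satisfies $\pd\vac=0$ and $[\pd,Y(a,Z)]=\pd_z Y(a,Z)$ as an identity of superfields. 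One sees the latter directly since $\ad\pd=(\ad\sd^1)^2$, and the SUSY translation axiom $[\sd^1,Y(a,Z)]=(\pd_{\zeta^1}-\zeta^1\pd_z)Y(a,Z)$ squares, after accounting for the extra sign incurred by commuting the two odd operations, to $\pd_z Y(a,Z)$. Specializing this identity at $\zeta=0$ yields $[\pd,a(z)]=\pd_z a(z)$, while $\pd\vac=0$ is kept verbatim.

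The remaining and most delicate axiom is locality, where care with the super-commutator signs is needed, and this is where I expect the only real subtlety to lie. Starting from the SUSY locality $(z-w)^{M(a,b)}[Y(a,Z),Y(b,W)]=0$, I would specialize the odd variables of both $Z$ and $W$ to zero. The crucial point is that this specialization is multiplicative on products of superfields: in $Y(a,Z)Y(b,W)=\sum Z^{-1-i|[N]-I}W^{-1-j|[N]-J}a_{(i|I)}b_{(j|J)}$ the only terms surviving at $\zeta=\omega=0$ are those with $I=J=[N]$, which involve no odd monomials and hence no reordering signs, so the reduction equals $a(z)b(w)$; similarly the opposite product reduces to $b(w)a(z)$. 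Because the surviving coefficients have parities $\ol{a}$ and $\ol{b}$, the super-commutator appearing in the SUSY locality reduces exactly to the non-SUSY super-commutator $[a(z),b(w)]=a(z)b(w)-(-1)^{\ol{a}\ol{b}}b(w)a(z)$. Hence $(z-w)^{M(a,b)}[a(z),b(w)]=0$, establishing locality. Thus the main obstacle is not any single hard computation but the bookkeeping confirming that the reduction $\zeta=0$ is compatible with operator products and respects the parity conventions; once this is in place, all four axioms fall out by specialization.
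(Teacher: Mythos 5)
Your proof is correct and is essentially the paper's own argument: the paper's proof is the one-line observation that the substitution $\zeta^1=\dotsb=\zeta^N=0$ turns the $N_K=N$ SUSY axioms into the vertex algebra axioms, which is exactly what you carry out axiom by axiom (with the translation axiom already guaranteed by \eqref{eq:NK:pd}). Your extra bookkeeping on the surviving coefficients $a_{(j|[N])}$, their parities, and the multiplicativity of the specialization in the locality axiom is a faithful expansion of that same substitution argument.
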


\begin{proof}
The substitution $\zeta^1=\dotsb=\zeta^N=0$ makes the axioms of 
an $N_K=N$ SUSY vertex algebra into those of a vertex algebra.
\end{proof}

\begin{rmk}
In the geometry of supercurves (see \cite[Chap.\ 2]{Ma}),
for a supercurve $X=(\ul{X},\shO_X)$ with structure sheaf 
$\shO_X=\shO_X^{\ev}\oplus\shO_X^{\od}$,  
the reduced curve of $X$ is defined to be 
$(\ul{X},\shO_X^{\ev}/\sqrt{\shO_X^{\ev}})$.
Our terminology is an analogue of this notion.
\end{rmk}

To give some examples of $N_K=N$ SUSY vertex algebras, 
we will use the following terminology and notations.
\begin{itemize}
\item
We denote by $\clH$ the universal enveloping algebra of the $1|N$-dimensional Lie 
superalgebra with even basis $\lambda$ and odd basis $\chi^1,\dotsc,\chi^N$  satisfying
the relation $[\lambda,\chi^i]=0$ and $[\chi^i,\chi^j]=-2\delta_{i,j}\lambda$ ($i,j\in[N]$).
We also use another basis $\pd,\sd^1,\dotsc,\sd^N$ of $\clH$ subject to the relation 
$[\pd,\sd^i]=0$ and $[\sd^i,\sd^j]=2 \delta_{i,j} \pd$ ($i,j\in[N]$).

\item
We recall the $\Lambda$-bracket formalism \cite[3.2.1]{HK}. 
For $j \in \bbN$ and $J=\{j_1,\dotsc,j_r\} \subset [N]$, 
we denote $\Lambda^{j|J} \ceq \lambda^j\chi^{j_1}\dotsm\chi^{j_r}$.
Then, for an $N_K=N$ SUSY vertex algebra $\bbV$ and $a,b \in \bbV$, we define 
\begin{align*}
 [a_\Lambda b] \ceq \sum_{j \in \bbN, \, J \subset [N]} 
                    \frac{(-1)^{\abs{J}(N+(\abs{J}+1)/2)}}{j!}\Lambda^{j|J}a_{(j|J)}b,
\end{align*}
which is an element of $\clH \otimes V =V[\lambda,\chi^1,\dotsc,\chi^N]$.

\item 
An $N_K=N$ SUSY Lie conformal algebra \cite[Definition 4.10]{HK} %\cite[Definition 3.7]{BHS}.
is an abstraction of the properties of $\Lambda$-bracket,
and an SUSY analogue of Lie conformal algebras.
Formally, it is a pair of a $\Zt$-graded $\clH$-module $L$ equipped with a bilinear map
$[\cdot_\Lambda \cdot]\colon L \otimes L \to \clH \otimes L$ of parity $\ol{N}$, 
subject to certain conditions.

\item 
The enveloping $N_K=N$ SUSY vertex algebra $\bbV(L)$ of an $N_K=N$ SUSY Lie conformal 
algebra $L$ \cite[3.4, 4.11]{HK} is an SUSY analogue of the enveloping vertex algebra
of a Lie conformal algebra. %\cite[16.1.11]{FB}.

%\item
%Similarly as the non-SUSY case, 
%if an $N_K=N$ Lie conformal algebra $L$ is generated by some
%elements $\{a_i\}_i$ as an $\clH$-module, then we say that the $N_K=N$ vertex algebra $V(L)$
%is generated by the same elements,
%Also, 
%for an $N_K=N$ Lie conformal algebra $L$ and a central element $C \in L$ such that 
%$\sd^i C = 0$ for any $i \in [N]$ with $c\in \bbC$, 
%we denote by $V^c(L)$ the quotient $V(L)/(C-c\vac)_{(-1|N)}V (L)$. 
%If moreover $L$ is generated by elements $\{a_i\}_i$, 
%then we say $V^c(L)$ is generated by the same elements.
\end{itemize}

\begin{eg}[{\cite[Example 5.5]{HK}}]\label[eg]{eg:SUSY:N=123}
%\cite[Example 3.11]{BHS}}]
For $N=1,2,3$, let $\wt{\clK}_N$ be the $N_K=N$ SUSY Lie conformal algebra generated 
as a free $\clH$-module by an element $\tau$ of parity $\ol{N}$ and an even element $C$, 
satisfying 
\[
 \pd C = \sd^iC =0 \ (i \in [N])
\]
and the $\Lambda$-bracket 
\[
 [\tau_\Lambda \tau]=\bigl(2\pd+(4-N)\lambda+\tsum_{i=1}^N\chi^i \sd^i\bigr)\tau + 
 \frac{1}{3}\lambda^{3-N}\chi^{[N]}C.
 \]
%When $1\le N \le 3$, $\clK_N$ admits a central extension, obtained by adding the term 
%$\frac{1}{3}\lambda^{3-N}\chi^{[N]}C$ to the right hand side,
%where $C$ is even, central and satisfies $\pd C=\sd^iC=0$.
%For $N=4$, $\clK_N$ admits a central extension, obtained by adding the term $\lambda C$. 
%
%For $1\le N \le4$, 
Let $\bbV^{N_K=N}_{SUSY}$ be the quotient of the enveloping 
SUSY vertex algebra $\bbV$ of $\wt{\clK}_N$ %the central extension of $\clK_N$ 
by $(C-c\vac)_{(-1|N)}\bbV$, where $c \in \bbC$ is called the \emph{central charge}.
Then $\bbV^{N_K=N}_{SUSY}$ is an $N_K=N$ SUSY vertex algebra
strongly generated by an odd element $\tau$ of parity $\ol{N}$ with $\Lambda$-bracket
%\begin{align*}
% [\tau_\Lambda \tau]=(2T+(4-N)\lambda+\tsum_{i=1}^{N}\chi^iS^i)\tau + 
% \begin{cases}\frac{c}{3}\lambda^{3-N}\chi^{[N]} & (N=1,2,3) \\ c\lambda & (N=4)\end{cases}.
%\end{align*}

The SUSY vertex algebras $V^{N_K=1,2,3}_{SUSY}$ encode 
the vertex algebra structures $V^{N=1,2,3}$ in \cref{s:N=124} as follows.
\begin{itemize}
\item
For $\bbV^{N_K=1}_{SUSY}$, if we expand the generating superfield $\tau(Z)$ as
\begin{align*}
 \tau(z,\zeta) = G(z) + 2\zeta L(z),
\end{align*} 
then $G$ and $L$ coincide with the generators of the (Neveu-Schwarz) vertex algebra 
$V^{N=1}$ in \cref{cor:N=1}.
Note that we can identify $\tau=G$ and $\sd \tau=2L$ in $\bbV^{N_K=1}_{SUSY}$,
where $\sd$ is the odd translation operator.
Hence, the reduced part $(\bbV^{N_K=1}_{SUSY})_{\tred}$ is isomorphic to $V^{N=1}$.

\item
For $\bbV^{N_K=2}_{SUSY}$, if we expand $\tau(Z)$ as
\begin{align*}
 \tau(z,\zeta^1,\zeta^2) = 
 J(z)+\zeta^1 (G^+(z)+G^-(z))+\zeta^2(G^+(z)-G^-(z))+2\zeta^1\zeta^2L(z), 
\end{align*} 
then $L,J,G^{\pm}$ coincide with the generators of 
the vertex algebra $V^{N=2}$ in \cref{cor:N=2}.
We can identify $\tau=J$, $\sd^1\tau=G^++G^-$, $\sd^2\tau=G^+-G^-$ and 
$\sd^1\sd^2 \tau=-2L$ in $V^{N_K=2}_{SUSY}$, 
where $\sd^1,\sd^2$ are the odd translation operators.
Thus, the reduced part $(\bbV^{N_K=2}_{SUSY})_{\tred}$ is isomorphic to $V^{N=2}$.

\item 
For $\bbV^{N_K=3}_{SUSY}$, the spaces $\bbC \tau$, $\Span_{\bbC}\{\sd^i\tau\}_{i=1}^3$, 
$\Span_{\bbC}\{\sd^i\sd^j\tau\}_{1\le i<j\le3}$ and $\bbC\sd^1\sd^2\sd^3\tau$
correspond to the subspaces $\bbC\Phi$, $\Span_{\bbC}\{A^i\}_{i=1}^3$, 
$\Span_{\bbC}\{G^i\}_{i=1}^3$, and $\bbC L$ in \cref{ss:N=3}, respectively.

%\item
%When $N=4$, if we expand the superfield as
%\begin{align*}
% \tau(z,\zeta^1,\zeta^2) = 
% \sqrt{-1}J(z)+\zeta^1 G^{(2)}(z)-\zeta^2 G^{(1)}G(z)+2\zeta^1\zeta^2L(z)
%\end{align*} 
%and set $G^+\ceq\frac{1}{2}(G^{(1)}-iG^{(2)})$, $G^-\ceq\frac{1}{2}(G^{(1)}+iG^{(2)})$, 
%then $L,J,G^{\pm}$ coincide with the generators of the vertex algebra $V^{N=2}$ in \cref{ss:N=2}.
\end{itemize}
%In each case, the $L_0$-eigenvalues belong to $\hf\bbN$, 
%and we have $V^{N_K=N}_{SUSY}=\bigoplus_{\Delta \in \hf\bbN}V^{N_K=N}_{SUSY,\Delta}$.
The $N=4$ superconformal vertex algebra $V^{N=4}$ in \cref{ss:N=4} 
has a structure of $N_K=1$ SUSY vertex algebra \cite[Example 5.11]{HK}.
The big $N=4$ superconformal vertex algebra $V^{N=4,\tbig}$ in \cref{ss:big4}
has a structure of $N_K=4$ SUSY vertex algebra \cite[Example 5.5, (5.10), (5.11)]{HK}. 
\end{eg}

%%\begin{eg}
%The $N_K=1$ SUSY VA $B_1$ in \cite[Example 5.7]{HK} is strongly conformal.
%It is generated by an odd element $\Psi$ whose $\Lambda$-bracket is
%\begin{align*}
% [\Psi_\Lambda \Psi]=\Lambda^{1|1}1.
%\end{align*}
%The conformal vector is $\tau \ceq :(S\Psi)\Psi:+mT\Psi$ for any $m \in \bbC$,
%and the central charge is $\frac{3}{2}-2m^2$.
%\end{eg}
%
%\begin{eg}[{\cite[Example 3.12]{BHS}}]
%The $n$-dimensional $bc$-$\beta\gamma$ system in \cite[Chap.\ 2]{MSV} 
%is an $N_K=1$ SUSY VA generated by even elements $B^i$ and odd elements $\Psi^i$
%($i=1,\dotsc,n$) with $\Lambda$-bracket 
%\begin{align*}
% [{B^i}_\Lambda \Psi^j] = \delta_{i,j} = [{\Psi^j}_\Lambda B^i].
%\end{align*}
%The (non-super) vertex operators $b^i(z),a^i(z)$ and $\phi^i(z),\psi^i(z)$ in \cite{MSV} 
%are obtained by the expansion
%\begin{align*}
% B^i(Z) = b^i(z)+\zeta\phi^i(z), \quad \Psi^i(Z) = \psi^i(z)+\zeta a^i(z).
%\end{align*}
%\end{eg}

%%%%%%%%%%%%%%%%%%%%%%%%%%%%%%%%%%%%%%%%%%%%%%%%%%%%%%%%%%%%%%%%%%%%%%%%%%%%%%%%%%%%%%%%%%
\subsection{Definition}\label{ss:SUSY:Zhu}

In this subsection, we will give the definition of Zhu algebra for a SUSY vertex algebra.
We first explain how to come up with our definition in the case $N_K=1$,
and then give the definition for the general $N_K=N$ case.

\subsubsection{}\label{sss:SUSY:Zhu:NK=1}

We seek an $N_K=1$ SUSY analogue of Zhu algebra $\wtA(V)$. 
Recalling the geometric meaning of $\wtA(V)$ for a non-SUSY vertex algebra $V$ 
(see \cref{rmk:wtA:geom}), we use the terminology of superconformal curves 
(or supersymmetric curves) in \cite[Chapter 2]{Ma} 
and consider a super-geometry analogue of the coordinate change \eqref{eq:wtA:f}.

Following \cite[Chap.\ 2, 7.3, 8.1]{Ma}, we denote by $\bbG_a^{1|1}$ 
the $1|1$-dimensional algebraic supergroup with super-coordinate $Z=(z,\zeta)$ 
whose product is given by \eqref{eq:NK=1:+}.
It is a super analogue of the additive group $\bbG_a$,
and has the standard $N_K=1$ SUSY structure $\sd_Z \ceq \pd_\zeta+\zeta\pd_z$.
We also denote by $\bbG_m^{1|1}$ the $1|1$-dimensional supergroup 
with super-coordinate $X=(x,\xi)$ whose product is given by 
\begin{align*}
 (x_1,\xi_1) \cdot (x_2,\xi_2) \ceq (x_1x_2+\xi_1\xi_2, x_1\xi_2+x_2\xi_1).
\end{align*}
It is a super analogue of the multiplicative group $\bbG_m$, 
and has a right-invariant $N_K=1$ SUSY structure $D_X \ceq x\pd_\xi+\xi\pd_x$.
Then, by \cite[Chap.\ 2, 8.2]{Ma}, the map 
\begin{align*}
 \Exp\colon \bbG_a^{1|1} \lto \bbG_m^{1|1}, \quad 
  Z=(z,\zeta) \lmto \Exp(Z) \ceq e^z(1+\zeta) = (e^z,\zeta e^z)
\end{align*}
is a surjective morphism of algebraic supergroups and compatible with SUSY structures.
We denote the inverse map of $\Exp$ by 
\begin{align*}
 \Log X \ceq (\log x,\xi/x).    
\end{align*}

Now, the following maps $F,G$ on $\bbC^{1|1}$ are natural $N_K=1$ SUSY analogue 
of the maps $f,g$ in \eqref{eq:wtA:f}. 
\begin{align*}
 F\colon Z = (z,\zeta) &\lmto \Exp(\gamma Z)-1 = (e^{\gamma z}-1,\gamma\zeta e^{\gamma z}), \\
 G\colon X = (x,\xi)   &\lmto \gamma^{-1}\Log(1+X) = 
                              \bigl(\gamma^{-1}\log(1+x),\gamma^{-1}\xi/(1+x)\bigr).
\end{align*}
Then we come up with:

\begin{dfn}
Let $\bbV$ be an $N_K=1$ SUSY vertex algebra (\cref{ss:SUSY:NK=N}),
and $\gamma$ be a non-zero complex number or a formal variable.
For $a,b \in \bbV$ and $n \in \bbZ$, we define $a \Hop{\gamma}{n} b \in \bbV[\gamma]$ by 
\begin{align}\label{eq:SUSY:N=1bl}
 a \Hop{\gamma}{n} b \ceq \gamma^{n-1}
 \sres_X \bigl[ x^{-n}\xi a\bigl(\gamma^{-1}\Log(1+X)\bigr)b \bigr] \delta X = 
 \sres_Z \Bigl[ \frac{\gamma^n \zeta e^{\gamma z}}{(e^{\gamma z}-1)^n}a(Z)b \Bigr] \delta Z
\end{align}
Here $\delta X$ denotes the Berezinian form of the $N_K=1$ supervariable $X=(x,\xi)$.
\end{dfn}

\begin{rmk}
The equality in \eqref{eq:SUSY:N=1bl} can be checked similarly 
as the non-SUSY case \eqref{eq:wtA:blXZ}, 
but here we should consider the Berezinian form $\delta X$ (section of the Berezinian sheaf) 
instead of the differential form $d X$ (section of the cotangent sheaf).
The coordinate change $X=\Exp(\gamma Z)-1$ affects the Berezinian form through superdeterminant, 
so we have $\delta X = (\pd_zx)(\pd_\zeta\xi)^{-1}\delta Z = \delta Z$, and obtain the equality.
\end{rmk}

Since $\sres_Z[\zeta h(Z)]\delta Z = \res_z[h(z,0)]dz$ 
for any series $h(Z)=h(z,\zeta)$, we have
\begin{align}\label{eq:SUSY:Nk=1bl}
 a \Hop{\gamma}{n} b = 
 \res_z \Bigl[ \frac{\gamma^n e^{\gamma z}}{(e^{\gamma z}-1)^n}a(z,0)b \Bigr] dz.
\end{align}
Hence it coincides with the non-SUSY operation $\Hop{\gamma}{n}$ 
on the reduced part $\bbV_{\tred}$ (\cref{lem:NK:red}).

%%%%%%%%%%%%%%%%%%%%%%%%%%%%%%%%%%%%%%%%%%%%%%%%%%%%%%%%%%%%%%%%%%%%%%%%%%%%%%%%%%%%%%%%%%
\subsubsection{}\label{sss:SUSY:Zhu:NK=N}

We can immediately generalize the operation \eqref{eq:SUSY:Nk=1bl} 
to any $N_K=N$ SUSY vertex algebra.
Let $\bbV$ be an $N_K=N$ SUSY vertex algebra (\cref{ss:SUSY:NK=N}), 
and $\gamma$ be a complex number or a formal variable.
Also, let $a,b \in \bbV$ and $n \in \bbZ_{\ge 1}$.
If $\gamma \ne 0$, then we define $a \Hop{\gamma}{n} b \in \bbV[\gamma]$ by 
\begin{align}\label{eq:SUSY:bl}
 a \Hop{\gamma}{n} b \ceq 
 \sres_Z \Bigl[\frac{\gamma^n \zeta^{[N]} e^{\gamma z}}{(e^{\gamma z}-1)^n}a(Z)b\Bigr] \delta Z
= \res_z \Bigl[\frac{\gamma^n e^{\gamma z}}{(e^{\gamma z}-1)^n}a(z,0,\dotsc,0)b \Bigr] dz.
\end{align}
If $\gamma=0$, then we define 
\[
 a \Hop{\gamma=0}{n} b \ceq 
 \sres_Z \bigl[ z^{-n}\zeta^{[N]} a(Z)b \bigr] \delta Z = a_{(-n|[N])}b.
\]
Then the operation $\Hop{\gamma}{n}$ coincides 
with the non-SUSY operation $\Hop{\gamma}{n}$ on the reduced part $\bbV_{\tred}$.
Hence, \cref{thm:wtA:wtO} implies:

\begin{prp}\label[prp]{prp:SUSY:Zhu}
Let $\bbV$ be an $N_K=N$ SUSY vertex algebra, 
and $\gamma$ be a complex number or a formal variable. 
Then the quotient space 
\begin{align*}
 \wtA_\gamma(\bbV) \ceq \bbV[\gamma]/\wtO_\gamma(\bbV), \quad
 \wtO_\gamma(\bbV) \ceq \Span_{\bbC[\gamma]}\{a\Hop{\gamma}{n}b \mid a,b \in \bbV, \, n \ge 2\}
\end{align*}
is an associative algebra whose product $\Hop{\gamma}{}$ is induced by the operation 
$\Hop{\gamma}{1}$ as
\[
 [a]\Hop{\gamma}{}[b] \ceq [a\Hop{\gamma}{1}b] \quad (a,b \in \bbV), 
\]
and the equivalence class of $\vac\in\bbV$ is its unit.
Moreover, it is isomorphic to the Zhu algebra $\wtA_\gamma(\bbV_{\tred})$
for the non-SUSY vertex algebra $\bbV_{\tred}$.
\end{prp}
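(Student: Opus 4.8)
The plan is to reduce the entire statement to the non-SUSY result \cref{thm:wtA:wtO}, exploiting the fact that the SUSY operation $\Hop{\gamma}{n}$ was defined so as to be \emph{literally} the non-SUSY operation of the reduced vertex algebra $\bbV_{\tred}$.

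First I would note that, by \cref{lem:NK:red}, the reduced part $\bbV_{\tred}$ has the same underlying linear superspace $\bbV$, with field $a(z)=a(z,0,\dotsc,0)$. The key input is then the second equality in \eqref{eq:SUSY:bl} (and its $\gamma=0$ counterpart): for every $n\ge1$ and every admissible $\gamma$ (a formal variable, a nonzero scalar, or $0$),
\[
 a\Hop{\gamma}{n}b=\res_z\Bigl[\tfrac{\gamma^n e^{\gamma z}}{(e^{\gamma z}-1)^n}a(z,0,\dotsc,0)b\Bigr]dz,
\]
so the SUSY product depends only on the reduced field and therefore coincides with the non-SUSY $\Hop{\gamma}{n}$ computed in $\bbV_{\tred}$. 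This is the one SUSY-specific point, and I would verify it uniformly in $n$ and $\gamma$; it rests on the super-residue identity $\sres_Z[\zeta^{[N]}h(Z)]\delta Z=\res_z[h(z,0,\dotsc,0)]dz$ and on the invariance of the Berezinian form under $X=\Exp(\gamma Z)-1$.

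Granting this coincidence, the rest is formal. The generators of $\wtO_\gamma(\bbV)$ and $\wtO_\gamma(\bbV_{\tred})$ agree term by term, so $\wtO_\gamma(\bbV)=\wtO_\gamma(\bbV_{\tred})$ as subspaces of $\bbV[\gamma]=(\bbV_{\tred})[\gamma]$. Hence the quotients are the same space, the induced products $\Hop{\gamma}{}$ agree, and the images of $\vac$ coincide. Since $\bbV_{\tred}$ is an honest vertex algebra, \cref{thm:wtA:wtO} \ref{i:thm:wtO:ideal} and \ref{i:thm:wtO:assoc} apply to it and show that $(\wtA_\gamma(\bbV_{\tred}),\Hop{\gamma}{},[\vac])$ is a unital associative $\bbC[\gamma]$-algebra (the unit being $[\vac]$ by \eqref{eq:wtA:Hop-vac}). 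Transporting this along the identification gives all the asserted structure on $\wtA_\gamma(\bbV)$, and the claimed isomorphism $\wtA_\gamma(\bbV)\cong\wtA_\gamma(\bbV_{\tred})$ is realized by the identity map.

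I do not anticipate a real obstacle: the whole content is the coincidence of operations, which the definition \eqref{eq:SUSY:bl} was arranged to produce. The only care needed is bookkeeping—ensuring the coincidence, and hence the equality of ideals and products, holds simultaneously for all $n\ge2$ (defining $\wtO_\gamma$) and for $n=1$ (defining the product and the unit), and across both the formal-variable and specialized interpretations of $\gamma$.
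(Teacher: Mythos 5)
Your proposal is correct and matches the paper's argument: the paper likewise observes that by \eqref{eq:SUSY:bl} the SUSY operation $\Hop{\gamma}{n}$ coincides with the non-SUSY operation on the reduced part $\bbV_{\tred}$ (via the super-residue identity and, for $n=1,2$ and all admissible $\gamma$ uniformly), and then invokes \cref{thm:wtA:wtO} for $\bbV_{\tred}$ to obtain the unital associative structure and the tautological identification $\wtA_\gamma(\bbV)\cong\wtA_\gamma(\bbV_{\tred})$. No gaps; your bookkeeping remarks correspond exactly to what the paper does.
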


\begin{dfn}\label[dfn]{dfn:SUSY:Zhu}
The algebra $\wtA_\gamma(\bbV)$ in \cref{prp:SUSY:Zhu} is called 
the Zhu algebra of the $N_K=N$ SUSY vertex algebra $\bbV$.
We denote $\wtA(\bbV) \ceq \wtA_{\gamma=1}(\bbV)$ as in the non-SUSY case.
\end{dfn}

\begin{eg}
Consider the $N_K=1,2,3$ SUSY vertex algebra $\bbV^{N_K=1,2,3}_{SUSY}$ in \cref{eg:SUSY:N=123}.
Since its reduced part is isomorphic to $V^{N=1,2,3}$ in \cref{s:N=124}, 
we have $\wtA_\gamma(\bbV^{N_K=1,2,3}_{SUSY}) \cong \wtA_\gamma(V^{N=1,2,3})$.
\end{eg}

The algebra $\wtA_\gamma(\bbV)$ for a SUSY vertex algebra $\bbV$ enjoys 
the same properties of the algebra $\wtA(V)$ for a non-SUSY vertex algebra $V$
in \cref{cor:wtA:pd=0,prp:wtA:LinZ}.
In particular, for any $a \in \bbV$, we have
\begin{align}\label{eq:NK:pd=0}
 [\pd a] = 0.
\end{align}
We also have a similar statement as \cref{prp:wtA:gamma=0}:

\begin{lem}
Let $\bbV$ be an $N_K=N$ SUSY vertex algebra.
Then, the specialization $\wtA_{\gamma=0}(\bbV)$
is identical to the $C_2$-Poisson algebra \cite[Proposition 4.1.2]{Y}:
\[
 R(\bbV)\ceq\bbV/C_2(\bbV), \quad 
 C_2(\bbV) \ceq \Span_{\bbC}\{a_{(j|J)}b \mid a,b\in\bbV, \, j\ge2, \, J \subset [N]\}.
\]
In particular, the product is supercommutative, and it is equipped with a Poisson bracket:
\begin{align*}
 [a]\cdot[b]=[a_{(-1|N)}b]=[a]\Hop{\gamma=0}{}[b], \quad 
 \{[a],[b]\}=[a_{(0|N)}b]=\lim_{\gamma\to0}
 \frac{1}{\gamma}\Bigl([a]\Hop{\gamma}{}[b]-p(a,b)[b]\Hop{\gamma}{}[a]\Bigr).
\end{align*}
\end{lem}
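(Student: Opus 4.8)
The plan is to deduce the statement from the non-SUSY results of \cref{s:wtA} together with a single computation involving the odd translation operators. First I would unwind the definitions: by \eqref{eq:SUSY:bl} at $\gamma=0$ the subspace $\wtO_{\gamma=0}(\bbV)$ is spanned by the $a\Hop{\gamma=0}{n}b=a_{(-n|[N])}b$ with $n\ge2$, and specialising \eqref{eq:wtA:pd-Hop} to $\gamma=0$ gives $(\pd a)\Hop{\gamma=0}{n}b=n\,a\Hop{\gamma=0}{n+1}b$, so exactly as in the non-SUSY case this span collapses to $\wtO_{\gamma=0}(\bbV)=\Span_{\bbC}\{a_{(-2|[N])}b\mid a,b\in\bbV\}$. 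Next, \cref{prp:SUSY:Zhu} at $\gamma=0$ identifies $\wtA_{\gamma=0}(\bbV)$ with $\wtA_{\gamma=0}(\bbV_{\tred})$, and \cref{prp:wtA:gamma=0} realises the latter as the $C_2$-Poisson algebra $R(\bbV_{\tred})$, supercommutative with product $[a]\cdot[b]=[a_{(-1|[N])}b]$ and bracket $\{[a],[b]\}=[a_{(0|[N])}b]$. Thus the asserted algebra and Poisson structure will be automatic once the underlying spaces are matched.

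So the heart of the matter is the vector-space identity $\wtO_{\gamma=0}(\bbV)=C_2(\bbV)$, where in the present mode convention $C_2(\bbV)$ is spanned by the $a_{(-n|J)}b$ with $n\ge2$ and $J\subset[N]$. The inclusion $\wtO_{\gamma=0}(\bbV)\subseteq C_2(\bbV)$ is trivial ($J=[N]$, $n=2$), and the content is that the generators with $J\neq[N]$ are redundant. To see this I would extract a mode identity from the translation axiom $(\sd^k a)(Z)=\sd_Z^k a(Z)$ of \eqref{eq:NK:sd-a}: expanding $a(Z)=\sum_{j,J}z^{-1-j}\zeta^{[N]-J}a_{(j|J)}$ and applying $\sd_Z^k=\pd_{\zeta^k}+\zeta^k\pd_z$, the $\pd_{\zeta^k}$-part contributes only to target mode-sets containing $k$, which yields, for every $c\in\bbV$ and every $J'\ni k$,
\[
 (\sd^k c)_{(j|J')}=\pm\,c_{(j|J'\setminus\{k\})}.
\]
Equivalently, for $k\notin J$ one has $c_{(j|J)}=\pm(\sd^k c)_{(j|J\cup\{k\})}$, so adjoining a missing index to the mode-set amounts to applying an odd translation operator to the state while keeping $j$ fixed.

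Iterating over the indices $k_1,\dots,k_m$ of $[N]\setminus J$ then gives, for all $a,b\in\bbV$ and $n\ge2$,
\[
 a_{(-n|J)}b=\pm\,(\sd^{k_1}\!\cdots\sd^{k_m}a)_{(-n|[N])}b\in\Span_{\bbC}\{c_{(-n|[N])}d\}=\wtO_{\gamma=0}(\bbV),
\]
the last equality again by the $\gamma=0$ collapse. This proves $C_2(\bbV)\subseteq\wtO_{\gamma=0}(\bbV)$, hence equality, and identifies $\wtA_{\gamma=0}(\bbV)$ with $R(\bbV)$. The limit description of the bracket then follows by applying \cref{thm:wtA:wtO}\,\ref{i:thm:wtO:(0)} to $\bbV_{\tred}$: in $\wtA_\gamma(\bbV_{\tred})$ one has $[a]\Hop{\gamma}{}[b]-p(a,b)[b]\Hop{\gamma}{}[a]=\gamma\,[a_{(0|[N])}b]$, and dividing by $\gamma$ and letting $\gamma\to0$ recovers $\{[a],[b]\}=[a_{(0|[N])}b]$.

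I expect the main obstacle to be the sign bookkeeping in the mode identity — the parities $(-1)^{N-|J|}$ in \eqref{eq:NK:sd-a} and the Koszul signs from reordering the $\zeta^k$ under $\pd_{\zeta^k}$ — together with verifying that the $\zeta^k\pd_z$-part of $\sd_Z^k$ never contributes to a target mode-set containing $k$, so that the identity above is exact rather than only valid modulo lower terms. Once this sign-careful computation is settled, the remainder is a direct assembly of \cref{prp:SUSY:Zhu}, \cref{prp:wtA:gamma=0}, and the $\gamma=0$ specialisation of \eqref{eq:wtA:pd-Hop}.
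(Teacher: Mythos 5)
Your proof is correct and follows essentially the same route as the paper, whose own proof is a one-liner declaring the lemma an immediate consequence of \cref{prp:wtA:gamma=0} and the formula \eqref{eq:SUSY:bl}, with the identification of $C_2(\bbV)$ (all mode-sets $J\subset[N]$) against the $J=[N]$ span $\wtO_{\gamma=0}(\bbV)$ delegated to the cited \cite[Proposition 4.1.2]{Y}. Your mode identity $(\sd^k c)_{(j|J')}=\pm\,c_{(j|J'\setminus\{k\})}$ for $k\in J'$ correctly fills in that step — the $\zeta^k\pd_z$ part of $\sd_Z^k$ indeed only produces target mode-sets avoiding $k$, and since the argument only requires membership in a span, the Koszul signs you flag as the main risk are harmless.
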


\begin{proof}
It is an immediate consequence of the argument in \cref{prp:wtA:gamma=0} 
and the formula \eqref{eq:SUSY:bl}.
\end{proof}

However, the SUSY case has an additional structure.
Recall that an $N_K=N$ SUSY vertex algebra $\bbV$ 
has the odd translation operators $\sd^k$  ($k\in[N]$).
These are odd derivations with respect to the ${}_{(-1|N)}$-operation in $\bbV$.
Then, by the relation $(\sd^k)^2=\pd$ and \eqref{eq:NK:pd=0}, 
we have:

\begin{prp}\label[prp]{prp:SUSY:Zhu_sd}
The odd translation operators $\sd^k$ ($k\in[N]$) of $\bbV$
induces odd differentials of the superalgebra $\wtA_\gamma(\bbV)$,
which are denoted by the same symbols $\sd^k$.
\end{prp}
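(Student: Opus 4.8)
The plan is to establish, for each fixed $k \in [N]$, three facts in turn: that $\sd^k$ (extended $\bbC[\gamma]$-linearly to $\bbV[\gamma]$) preserves the subspace $\wtO_\gamma(\bbV)$ and therefore descends to the quotient $\wtA_\gamma(\bbV)$; that the descended operator is an odd superderivation for the product $\Hop{\gamma}{}$; and that its square vanishes, so that it is a differential. The one structural input I would isolate first is the odd Leibniz rule for the \emph{reduced} products. Specializing the second identity in \eqref{eq:NK:sd-a} to $J=[N]$ and observing that the prefactor $(-1)^{N-\abs{[N]}}$ equals $(-1)^{0}=1$, I obtain, for every $j \in \bbZ$,
\begin{align*}
 \sd^k(a_{(j|[N])}b) = (\sd^k a)_{(j|[N])}b + (-1)^{\ol{a}}a_{(j|[N])}(\sd^k b),
\end{align*}
i.e.\ $\sd^k$ is an odd derivation with respect to each of the products $a_{(j|[N])}b$ defining the reduced vertex algebra $\bbV_{\tred}$ of \cref{lem:NK:red}.

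Next I would propagate this rule to the operations $\Hop{\gamma}{n}$. Using \eqref{eq:SUSY:bl} together with the expansion \eqref{eq:wtA:Hopn} applied to the reduced vertex operator $a(z,0,\dotsc,0)$, one writes $a \Hop{\gamma}{n} b = \sum_{j} c(j,n)\gamma^{j} a_{(j-n|[N])}b$; the $\bbC[\gamma]$-linearity of $\sd^k$ and the displayed derivation identity then give, for all $n \ge 1$,
\begin{align*}
 \sd^k\!\left(a \Hop{\gamma}{n} b\right) = (\sd^k a)\Hop{\gamma}{n}b + (-1)^{\ol{a}}\, a\Hop{\gamma}{n}(\sd^k b).
\end{align*}

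From here the conclusions are essentially formal. For $n \ge 2$ both summands on the right lie in $\wtO_\gamma(\bbV)$ by the description of that subspace in \cref{prp:SUSY:Zhu}, so $\sd^k(\wtO_\gamma(\bbV)) \subset \wtO_\gamma(\bbV)$ and $\sd^k$ descends to a well-defined odd operator on $\wtA_\gamma(\bbV)$. Taking $n=1$ and passing to equivalence classes yields the superderivation identity $\sd^k([a]\Hop{\gamma}{}[b]) = (\sd^k[a])\Hop{\gamma}{}[b] + (-1)^{\ol{a}}[a]\Hop{\gamma}{}(\sd^k[b])$. Finally, the relation $(\sd^k)^2 = \pd$ from \eqref{eq:NK:pd}, combined with the vanishing $[\pd a]=0$ from \eqref{eq:NK:pd=0}, gives $(\sd^k)^2[a] = [\pd a] = 0$ on $\wtA_\gamma(\bbV)$, so the descended $\sd^k$ is a square-zero odd derivation, as required.

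The argument is short, and the only point that calls for genuine care — what I would flag as the main obstacle — is the sign bookkeeping in the first step: one must confirm that the factor $(-1)^{N-\abs{J}}$ in \eqref{eq:NK:sd-a} collapses to $+1$ precisely for the index $J=[N]$ and does so uniformly in $j$, so that the reduced products, and hence every $\Hop{\gamma}{n}$, inherit a clean odd Leibniz rule free of spurious signs. Once this is verified, well-definedness on the quotient, the derivation property, and the square-zero property all follow mechanically, the last being an immediate consequence of the already-established identity $[\pd a]=0$.
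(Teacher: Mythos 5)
Your proposal is correct and follows essentially the same route as the paper, which justifies the proposition in one line via the odd Leibniz rule for the reduced products, the relation $(\sd^k)^2=\pd$, and $[\pd a]=0$ from \eqref{eq:NK:pd=0}. Your write-up merely fills in the details the paper leaves implicit — in particular, correctly noting that well-definedness on the quotient requires the Leibniz rule from \eqref{eq:NK:sd-a} for \emph{all} the $(j|[N])$-products entering $\Hop{\gamma}{n}$ with $n\ge2$ (with the sign $(-1)^{N-\abs{[N]}}=1$), not only the ${}_{(-1|[N])}$-operation the paper mentions.
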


Recall that the odd derivations $\sd^k$ on $\bbV$ induces the odd differentials 
on the $C_2$-Poisson algebra $R(\bbV)$ \cite[\S4.1, Remark 4.1.3]{Y}.
Thus, we have: 

\begin{cor}\label[cor]{cor:SUSY:limit}
Under the limit $\gamma \to 0$, 
the odd linear operators $[a] \mto [\sd^k a]$ ($k\in[N]$) on $\wtA_\gamma(\bbV)$ 
converges to the differentials $\sd^k$ on $R(\bbV)$.
\end{cor}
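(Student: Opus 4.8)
The plan is to deduce the statement from the $\bbC[\gamma]$-linearity of the induced operator together with the identification of the specialization $\wtA_{\gamma=0}(\bbV)$ with $R(\bbV)$, so that essentially no computation is required beyond aligning conventions with \cite{Y}.

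First I would observe that the odd translation operator $\sd^k\in(\End\bbV)^{\od}$ acts on the state space $\bbV$ with no reference to the parameter $\gamma$; extending it $\bbC[\gamma]$-linearly to $\bbV[\gamma]$, it commutes with the specialization $\gamma\mapsto c$ for every $c\in\bbC$. By \cref{prp:SUSY:Zhu_sd} it descends to an odd differential on $\wtA_\gamma(\bbV)$, and since the passage to the quotient is $\bbC[\gamma]$-linear, the descended operator $[a]\mapsto[\sd^k a]$ likewise commutes with specialization. This yields a commuting square relating the operator $[a]\mapsto[\sd^k a]$ on $\wtA_\gamma(\bbV)$, the specialization map to $\wtA_{\gamma=0}(\bbV)$, and the corresponding operator at $\gamma=0$. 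In other words, the ``limit $\gamma\to0$'' of $[a]\mapsto[\sd^k a]$ is nothing but its specialization at $\gamma=0$.

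Next I would invoke the identification $\wtA_{\gamma=0}(\bbV)=R(\bbV)$ furnished by the SUSY analogue of \cref{prp:wtA:gamma=0} (the Lemma stated just before \cref{prp:SUSY:Zhu_sd}), under which the specialized operator becomes $[a]\mapsto[\sd^k a]$ on $R(\bbV)=\bbV/C_2(\bbV)$. To see that this is well defined and equals the differential of \cite[\S4.1.1, Remark 4.1.3]{Y}, I would use the signed derivation property \eqref{eq:NK:sd-a}: since $C_2(\bbV)$ is spanned by the products $a_{(j|J)}b$ with $j\ge2$, and $\sd^k$ sends each such product to a $\bbC$-combination of $(\sd^k a)_{(j|J)}b$ and $a_{(j|J)}(\sd^k b)$ with the \emph{same} index $j\ge2$, the operator $\sd^k$ preserves $C_2(\bbV)$ and hence descends to $R(\bbV)$. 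By construction this descended operator is precisely the odd differential $[\sd^k]$ introduced in \cite{Y}.

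The only genuinely nontrivial point, and hence the main obstacle, is to make the dictionary with \cite{Y} explicit: one must verify that the $C_2$-Poisson algebra $R(\bbV)$ as defined here (namely $\wtA_{\gamma=0}(\bbV)$) coincides as a Poisson superalgebra with the one in \cite{Y}, and that the sign conventions entering \eqref{eq:NK:sd-a} agree with those used to define $[\sd^k]$ there. Once these conventions are matched, the corollary follows at once from the $\bbC[\gamma]$-linearity of $\sd^k$ and the specialization argument above, with no residual calculation.
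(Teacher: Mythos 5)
Your proposal is correct and follows essentially the same route as the paper, which presents \cref{cor:SUSY:limit} as an immediate consequence of \cref{prp:SUSY:Zhu_sd}, the identification $\wtA_{\gamma=0}(\bbV)=R(\bbV)$ from the preceding lemma, and the fact (cited from \cite[\S4.1, Remark 4.1.3]{Y}) that the odd derivations $\sd^k$ descend to $R(\bbV)$. Your specialization argument, together with the verification via \eqref{eq:NK:sd-a} that $\sd^k$ preserves $C_2(\bbV)$, merely spells out the steps the paper leaves implicit or outsources to \cite{Y}.
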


\begin{eg}
Consider the $N_K=1,2,3$ SUSY vertex algebras $\bbV=\bbV^{N=1,2,3}$ in \cref{eg:SUSY:N=123}.
\begin{enumerate}
\item 
By \cref{cor:N=1} and \cref{prp:SUSY:Zhu}, 
we have $\wtA(\bbV^{N=1}) \cong U(\osp(1|2)^f)$, 
which is generated by even $f$ and odd $g$ satisfying $[g,g]=2f$ and $[[f,f]=f,g]=0$.
This superalgebra is equipped with the odd endomorphism $\sd$ 
satisfying $\sd g=f$, $\sd f=0$ and $\sd^2=0$.

\item 
By \cref{cor:N=2} and \cref{prp:SUSY:Zhu}, we have 
$\wtA(\bbV^{N=2}) \cong U(\fsl(1|2)^f)$, 
which is generated by even $f,j$ and odd $g^{\pm}$.
It is equipped with the odd endomorphisms $\sd^1$ and $\sd^2$ satisfying 
$g^\pm \propto (\sd^1\pm\sd^2)j$, $f \propto \sd^1\sd^2g$ and $(\sd^1)^2=(\sd^2)=0$.

\item 
By \cref{thm:N=3} and \cref{prp:SUSY:Zhu}, we have 
$\wtA(\bbV^{N=3}) \cong U_1(\frR^{N=3}_c)$, 
which is generated by even $[L],\{[A^i]\{_{i=1}^3$ and odd $\{[G^i]\}_{i=1}^3,[\Phi]$.
It is equipped with the odd endomorphisms $\sd^1$, $\sd^2$ and $\sd^3$ satisfying 
$\sum_{i=1}^3\bbC\sd^i[\Phi]=\sum_{i=1}^3\bbC[A^i]$,
$\sum_{1\le i<j\le3}\bbC\sd^i\sd^j[\Phi]=\sum_{i=1}^3\bbC[G^i]$,
$[L] \propto \sd^1\sd^2\sd^3[\Phi]$ and $(\sd^1)^2=(\sd^2)=(\sd^3)=0$.
\end{enumerate}
\end{eg}

\appendix

%%%%%%%%%%%%%%%%%%%%%%%%%%%%%%%%%%%%%%%%%%%%%%%%%%%%%%%%%%%%%%%%%%%%%%%%%%%%%%%%%%%%%%%%%%
\section{\texorpdfstring{$L_0$}{L0}-centralizer in superconformal algebras}
\label{ss:zero-mode}

Here we explain the relationship between 
the Zhu algebras of superconformal vertex algebras
and the centralizer of the zero mode $\ell_0$ of the centerless Virasoro algebra 
in the corresponding superconformal algebras.
The latter are calculated in \cite{KL} using vector fields.

%%%%%%%%%%%%%%%%%%%%%%%%%%%%%%%%%%%%%%%%%%%%%%%%%%%%%%%%%%%%%%%%%%%%%%%%%%%%%%%%%%%%%%%%%%
\subsection{Notation}

Following \cite{KL}, let $\Lambda(N)$ be the Grassmann superalgebra generated 
by $N$ odd variables $\theta_1,\ldots,\theta_N$, 
and set $\Lambda_t(N)\ceq\bbC[t,t^{-1}]\otimes\Lambda(N)$.
For $\ep=(1,1,\ldots,1)\in\bbZ^N$, let $K(N;\ep)$ be the Lie superalgebra 
of contact super vector fields with respect to the following contact form
\begin{align}
 \omega_\ep \ceq dt-\sum_{i=1}^N t\theta_id\theta_i,
\end{align}
which is known as the $SO(N)$-superconformal algebra in the Ramond sector 
(see \cite[(1.11)]{KL} for definition).
Then, the centerless Virasoro algebra $\ol{\Vir}\ceq\Der\bbC[t,t^{-1}]$ 
can be embedded into the derived subalgebra 
$\frR(N)\ceq K'(N;\ep)=[K(N;\ep),K(N;\ep)]$ by
\begin{align*}
 \ol{\Vir} \lto \frR(N), \quad -t^{n+1}\pd_t \lmto 
 \ell_n \ceq -t^{n+1}\pd_t-\hf nt^n\sum_{i=1}^N\theta_i\pd_{\theta_i}\quad(n\in\bbZ).
\end{align*}
By using this embedding, we write
\begin{align*}
 \frR(N)_0 \ceq C_{K'(N;\ep)}(\ell_0)
\end{align*}
for the centralizer of the zero mode $\ell_0=-t\pd_t$ contained in $\frR(N)= K'(N;\ep)$.

Let us also borrow from \cite[pp.80--81]{KL}
some notation for vector fields on $\Lambda_t(N)$.
%\begin{comment}
%Memo (from \cite[pp.80--81]{KL}):\ 
We define $D_f$ for $f\in\Lambda_t(N)$ of parity $\ol{f}$ by
\begin{align*}
 D_f\ceq \Delta(f)D^\ep + \sum_{i=1}^N D^\ep(f)\theta_i\pd_{\theta_i} 
          + (-1)^{\ol{f}} \sum_{i=1}^N t^{-1}\pd_{\theta_i}(f)\pd_{\theta_i},
\end{align*}
where $\ep=(1,1,\ldots,1)$ and 
\begin{align*}
 \Delta(f)\ceq \left(2-\sum_{i=1}^N\theta_i\pd_{\theta_i}\right)(f)\in \Lambda_t(N),\quad
 D^\ep(f) \ceq 
 \left(\pd_t-\frac{1}{2}\sum_{i=1}^Nt^{-1}\theta_i\pd_{\theta_i}\right)(f)\in \Lambda_t(N).
\end{align*}
From \cite[(1.18)]{KL}, we have
\begin{align*}
 [D_f,D_g]=D_{\{f,g\}},\quad
 \{f,g\} \ceq \Delta(f)D^\ep(g) - D^{\ep}(f)\Delta(g) +
              (-1)^{\ol{f}}\sum_{i=1}^N t^{-1}\pd_{\theta_i}(f)\pd_{\theta_i}(g).
\end{align*}
For simplicity of notation, we set
\begin{align}
    &D_0\ceq D_t=2t\pd_t,\quad
    D_i\ceq D_{t\theta_i}=\theta_i t\pd_t-\pd_{\theta_i},\quad 
    D_{ij}\ceq D_{t\theta_i\theta_j}=\theta_i\pd_{\theta_j}-\theta_j\pd_{\theta_i},\\
    &D_{ijk}\ceq D_{t\theta_i\theta_j\theta_k}
    =-t\theta_i\theta_j\theta_k\pd_t
    -\theta_j\theta_k\pd_{\theta_i}
    -\theta_k\theta_i\pd_{\theta_j}
    -\theta_i\theta_j\pd_{\theta_k}.
\end{align}
Explicitly, when $N\leq4$, we have
\begin{align}
\label{eq:zero:DD:s}
&[D_0,D_{i}]=[D_0,D_{ij}]=[D_0,D_{ijk}]=0,\\
&[D_{i},D_{p}]=-\delta_{ip}D_0, \\
&[D_{i},D_{pq}]
    =-\delta_{ip}D_{q}+\delta_{iq}D_{p},\\
&[D_{i},D_{pqr}]
    =-\delta_{ip}D_{qr}+\delta_{iq}D_{pr}-\delta_{ir}D_{pq},\\
&[D_{ij},D_{pq}]
    =-\delta_{ip}D_{jq}+\delta_{iq}D_{jp}+\delta_{jp}D_{iq}-\delta_{jq}D_{ip},\\
&[D_{ij},D_{pqr}]
    =-\delta_{ip}D_{jqr}
    +\delta_{iq}D_{jpr}
    -\delta_{ir}D_{jpq}
    +\delta_{jp}D_{iqr}
    -\delta_{jq}D_{ipr}
    +\delta_{jr}D_{ipq},\\
\label{eq:zero:DD:g}
&[D_{ijk},D_{pqr}]=0.
\end{align}
%\end{comment}

%%%%%%%%%%%%%%%%%%%%%%%%%%%%%%%%%%%%%%%%%%%%%%%%%%%%%%%%%%%%%%%%%%%%%%%%%%%%%%%%%%%%%%%%%%
\subsection{$N=1$ case}

Using the description of $\frR(1)_0$ in \cite[(1.14)]{KL}, we have
\begin{align*}
 \frR(1)_0 = \Span_{\bbC}\{D_0,D_{1}\}\subset
 \Der(\bbC[t]\otimes\Lambda(1)).
           %= \Span_{\bbC}\{2t\pd_t,\theta_1t\pd_t-\pd_{\theta_1}\}.
\end{align*}
Since $[L]$ and $[G]$ are linearly independent by \cref{cor:N=1}, 
we obtain an isomorphism of Lie superalgebras
\begin{align}
 \Span_\bbC\{[L],[G]\} \lsto \frR(1)_0, \quad
 [L]\lmto -\tfrac{1}{2}D_0, \quad [G]\lmto D_1
\end{align}
and an isomorphism of associative algebras
\begin{align*}
 \wtA(V^{N=1}) \cong U(\frR(1)_0).
\end{align*}

%%%%%%%%%%%%%%%%%%%%%%%%%%%%%%%%%%%%%%%%%%%%%%%%%%%%%%%%%%%%%%%%%%%%%%%%%%%%%%%%%%%%%%%%%%
\subsection{$N=2$ case}

%Similarly to the $N=1$ case, 
Using the description of $\frR(2)_0$ in \cite[(1.14)]{KL}, we have
\begin{align*}
 \frR(2)_0=\Span_{\bbC}\{D_0,D_{1},D_{2},D_{12}\}\subset
 \Der(\bbC[t]\otimes\Lambda(2)).
\end{align*}
\begin{comment}
Let $H(2)=[\wt{H}(2),\wt{H}(2)]\cong\pgl(1|1)$ be the $1|2$-dimensional Lie superalgebra 
of Hamiltonian vector fields (see \cite[\S3.3.2]{K77} for the definition).
Then we have a central extension of Lie superalgebras
\begin{align*}
 0 \lto \bbC \ell_0 \linj \frR(2)_0 \lsrj H(2) \lto 0.
\end{align*}
\end{comment}
Then, by \cref{cor:N=2}, we obtain an isomorphism of Lie superalgebras
\begin{gather*}
 \Span_\bbC\{[L],[J],[G^+],[G^-]\}\lsto\frR(2)_0, \\
 [L]\lmto -\tfrac{1}{2}D_0,\quad 
 [J]\lmto \sqrt{-1}D_{12},\quad
 [G^{\pm}]\lmto \tfrac{1}{2}(D_{1}\mp \sqrt{-1}D_{2})
\end{gather*}
and an isomorphism of associative algebras
\begin{align*}
\wtA(V^{N=2}) \cong U(\frR(2)_0).
\end{align*}

%%%%%%%%%%%%%%%%%%%%%%%%%%%%%%%%%%%%%%%%%%%%%%%%%%%%%%%%%%%%%%%%%%%%%%%%%%%%%%%%%%%%%%%%%%
\subsection{$N=3$ case}

Using the description of $\frR(3)_0$ in \cite[(1.14)]{KL}, we have
\begin{align*}
 \frR(3)_0 = 
 \Span_{\bbC}\{D_0,D_{i},D_{ij},D_{123}\mid i,j\in\{1,2,3\}\}\subset
 \Der(\bbC[t]\otimes\Lambda(3)).
\end{align*}
We note that the centralizer $\osp(3|2)^f$ is realized in this Lie superalgebra as follows:
\begin{align*}
 \osp(3|2)^f\cong\Span_{\bbC}\{D_0,D_{i},D_{ij}\mid i,j\in\{1,2,3\}\}
 \subset \frR(3)_0.
\end{align*}

Unlike the preceding cases, the Zhu algebra $\wtA(V^{N=3}_c)$ is no longer 
isomorphic to the universal enveloping algebra $U(\frR(3)_0)$. 
Instead, we consider the following $1$-dimensional central extension
\begin{align}\label{eq:N=3:Zhu-L0}
 0 \lto \bbC[\vac] \lto \Span_\bbC\{[L],[A^i],[G^{i}],[\Phi],[\vac]\mid i=1,2,3\} 
   \lto \frR(3)_0 \lto 0,
\end{align}
which is isomorphic to the $5|4$-dimensional Lie superalgebra $\frR^{N=3}_c$ defined in \cref{lem:R3}.
Here the surjection onto $\frR(3)_0$ is given by
\begin{align*}
 [L]     \lmto -\frac{1}{2}D_0,\quad 
 [G^{i}] \lmto D_{i},\quad
 [A^i]   \lmto -D_{jk}\ (\text{if $\ve_{ijk}=1$}),\quad
 [\Phi]  \lmto D_{123},\quad
 [\vac]  \lmto 0.
\end{align*}
Using the relations \eqref{eq:zero:DD:s}--\eqref{eq:zero:DD:g}, one can check that this assignment is a well-defined Lie superalgebra homomorphism.
Then, by \cref{thm:N=3}, we can identify $\wtA(V^{N=3}_c)$ with a central quotient 
of the universal enveloping algebra 
of the central extension \eqref{eq:N=3:Zhu-L0} of the centralizer $\frR(3)_0$.

%\begin{rmk} (Not needed?) 
%Let $H(3)=[\wt{H}(3),\wt{H}(3)]$ be the $3|4$-dimensional Lie superalgebra of 
%Hamiltonian vector fields (see \cite[\S3.3.2]{K77} for the definition). 
%Then, by using the description of $H(3)$ in \cite[p.65]{K77}, we have 
%\begin{gather*}
% H(3) = 
% \Span_\bbC\{D_{\xi_i},D_{\xi_i\xi_j},D_{\xi_1\xi_2\xi_3}\}\subset 
% \Der\Lambda(\xi_1,\xi_2,\xi_3),\\
% 0 \lto \bbC \ell_0 \linj \frR(3)_0 \lsrj H(3) \lto 0.
%\end{gather*}
%\end{rmk}

%%%%%%%%%%%%%%%%%%%%%%%%%%%%%%%%%%%%%%%%%%%%%%%%%%%%%%%%%%%%%%%%%%%%%%%%%%%%%%%%%%%%%%%%%%
\subsection{Big $N=4$ case}

This case is a bit more complicated. 
Let $\frg=D(2,1;a)$ and $f$ be as in \eqref{eq:big4:frg} and \eqref{eq:big4:f}, respectively.
Using the description of $\frR(4)_0$ in \cite[(1.14)]{KL}, we have 
\begin{align*}%\label{eq:big4:R4_0}
&\frR(4)_0 = 
 \Span_\bbC\{D_0,D_{i},D_{ij},D_{ijk}\mid i,j,k\in\{1,2,3,4\}\} \subset
 \Der(\bbC[t]\otimes\Lambda(4)).
\end{align*}
We note that the centralizer $\frg^f$ is realized in this Lie superalgebra as follows:
\begin{align*}
 \frg^f \cong \Span_\bbC\{D_0,D_{i},D_{ij}\mid i,j\in\{1,2,3,4\}\} \subset \frR(4)_0.
\end{align*}

Now, similarly to the $N=3$ case, we consider the following $2$-dimensional central extension:
\begin{align}\label{eq:big4:Zhu-L0}
 0 \lto \bbC[\xi]\oplus\bbC[\vac] \lto 
 \Span_\bbC\{[\wt{L}],[\wt{J}^\alpha],[\wt{J}'^\alpha],
             [\wt{G}^{\pm\pm}],[\sigma^{\pm\pm}],[\xi],[\vac]\} \lto \frR(4)_0 \lto 0,
\end{align}
which is isomorphic to the $9|8$-dimensional Lie superalgebra 
$\frR^{N=4}_{c,a}$ defined in \cref{lem:R4}.
In order to explain the surjection onto $\frR(4)_0$, 
we cite from \cite{STP} the following basis of the middle term in \eqref{eq:big4:Zhu-L0},
which may be viewed as serving as a bridge between the bases appearing in \cite{KW} and \cite{KL}.
\begin{align*}
&[L]\ceq[\wt{L}],\qquad 
 [G_1]\ceq [\wt{G}^{++}],\qquad 
 [G_2]\ceq [\wt{G}^{+-}],\qquad 
 [G_3]\ceq [\wt{G}^{-+}],\qquad 
 [G_4]\ceq [\wt{G}^{--}],\\
&[A_{+,1}]\ceq\tfrac{1}{2}[\wt{J}^0],\qquad
 [A_{+,2}]\ceq\tfrac{1}{2}([\wt{J}^+]+[\wt{J}^-]),\qquad
 [A_{+,3}]\ceq\tfrac{1}{2\sqrt{-1}}([\wt{J}^+]-[\wt{J}^-])\\
&[A_{-,1}]\ceq\tfrac{1}{2}[\wt{J}'^0],\qquad
 [A_{-,2}]\ceq\tfrac{1}{2}([\wt{J}'^+]+[\wt{J}'^-]),\qquad
 [A_{-,3}]\ceq\tfrac{1}{2\sqrt{-1}}([\wt{J}'^+]-[\wt{J}'^-])\\
&[Q^1]\ceq \tfrac{1}{\gamma^+}[\sigma^{++}],\qquad
 [Q^2]\ceq -\tfrac{1}{\gamma^+}[\sigma^{+-}],\qquad
 [Q^3]\ceq \tfrac{1}{\gamma^-}[\sigma^{-+}],\qquad
 [Q^4]\ceq \tfrac{1}{\gamma^-}[\sigma^{--}].
\end{align*}
See also \cite[\S3.4]{W} for the display of the big $N=4$ SCA in terms of this basis.  
Then, the surjection in \eqref{eq:big4:Zhu-L0} is determined by
\begin{align*}
&[L] \lmto -\frac{1}{2}D_0,\qquad
 [G_{a}] \lmto D_{a},\qquad
 [A_{\pm,i}] \lmto \frac{1}{2}\alpha^{ab}_{\pm,i}D_{ab},\\
&[Q^a] \lmto 
 -\frac{1}{6}\ve^{abcd}D_{bcd}, \qquad
 [\xi] \lmto 0,\qquad
 [\vac]\lmto 0,
\end{align*}
where 
$\alpha^{ab}_{\pm,i}\ceq
 \tfrac{1}{2}(\pm\delta_{ia}\delta_{b4}\mp\delta_{ib}\delta_{a4}+\ve_{iab})$, 
$\ve^{abcd}$ is antisymmetric with $\ve^{1234}=1$,
the upper (lower) signs are taken together,
and we used Einstein's summation notation for $a,b,c,d\in\{1,2,3,4\}$.
\begin{comment}
To give an explicit description of the images of $[\wt{G}^{\pm\pm}]$ and $[\sigma^{\pm\pm}]$ 
under this surjection, one needs a careful coefficient comparison 
using the parameter $a$ of $V^{N=4,\tbig}_{c,a}$. 
However, we omit the detail in the present paper. 
\end{comment}

Finally, combining this realization with \cref{thm:big4}, 
we can identify $\wtA(V^{N=4,\tbig}_{c,a})$ with a central quotient of 
the universal enveloping algebra of the central extension \eqref{eq:big4:Zhu-L0} 
of the centralizer $\frR(4)_0$.

%\begin{rmk} (Not needed?) 
%Let $H(4)=[\wt{H}(4),\wt{H}(4)]$ be the $7|8$-dimensional Lie superalgebra of 
%Hamiltonian vector fields (see \cite[\S3.3.2]{K77} for the definition).
%Then, by using the description of $H(4)$ in \cite[p.65]{K77}, we have 
%\begin{gather}
%H(4) = 
% \Span_\bbC\{D_{\xi_i},D_{\xi_i\xi_j},D_{\xi_i\xi_j\xi_k},
%             D_{\xi_1\xi_2\xi_3\xi_4}\} \subset 
% \Der\Lambda(\xi_1,\xi_2,\xi_3,\xi_4),\\
% 0 \lto \bbC \ell_0 \linj \frR(4)_0 \lsrj H(4) \lto 0.
%\end{gather}
%\end{rmk}

%%%%%%%%%%%%%%%%%%%%%%%%%%%%%%%%%%%%%%%%%%%%%%%%%%%%%%%%%%%%%%%%%%%%%%%%%%%%%%%%%%%%%%%%%%
\subsection{$N=4$ case}

This case requires a Lie superalgebra different from $K(N;\ep)$.
Let $S(2;-1)=S'(2;-1)$ be the $SU(2)$-superconformal algebra, 
and $S(2)_0$ be the centralizer of $L_0$ in $S(2;-1)$ 
(see \cite[pp.82--85]{KL} for the definition). 
Then, based on the vector field realization in \cite[(1.21)]{KL}, we have
\begin{align*}
 S(2)_0 
 = \Span_\bbC\{L_0,T^\alpha_0,G^\pm_0,G^{\pm*}_0\} \subset
 \Der(\bbC[t]\otimes\Lambda(2)).
\end{align*}
where
\begin{align*}
&L_0\ceq-t\partial_t,\quad
T^1_0\ceq\tfrac{1}{2}(\theta_1\partial_{\theta_2}+\theta_2\partial_{\theta_1}),\quad
T^2_0\ceq\tfrac{\sqrt{-1}}{2}
 (\theta_2\partial_{\theta_1}-\theta_1\partial_{\theta_2}),\quad
T^3_0\ceq\tfrac{1}{2}(\theta_1\partial_{\theta_1}-\theta_2\partial_{\theta_2}),\\
&G^+_0\ceq-\sqrt{2}\theta_1 t\partial_t,\quad
G^-_0\ceq-\sqrt{2}\theta_2 t\partial_t,\quad
G^{+*}_0\ceq\sqrt{2}\partial_{\theta_1},\quad
G^{-*}_0\ceq\sqrt{2}\partial_{\theta_2}.
\end{align*}
Then, by \cref{thm:N=4}, we obtain an isomorphism of Lie superalgebras
\begin{gather*}
 \Span_\bbC\{[L],[J^\alpha],[G^{\pm}],[\ol{G}^{\pm}]\}\lsto S(2)_0, \\
 [L]\lmto L_0,\quad 
 [J^\alpha]\lmto T^\alpha_0,\quad
 [G^{\pm}]\lmto G^\pm_0,\quad
 [\ol{G}^{\pm}]\lmto G^{\pm*}_0
\end{gather*}
and an isomorphism of associative algebras
\begin{align*}
\wtA(V^{N=4}) \cong U(S(2)_0).
\end{align*}

%%%%%%%%%%%%%%%%%%%%%%%%%%%%%%%%%%%%%%%%%%%%%%%%%%%%%%%%%%%%%%%%%%%%%%%%%%%%%%%%%%%%%%%%%%
\begin{Ack}
The authors are grateful to Naoki Genra
for helpful discussions and for explaining his work \cite{G24b}.
They are also grateful to Yoshiyuki Koga and Takuya Matsumoto 
for helpful discussions and comments.
R.S.\ was supported by JSPS KAKENHI Grant Number 23K19018. 
S.Y.\ was supported by Asahipen Hikari Foundation.
\end{Ack}

%\noindent\textbf{Data Availability.}
%Data sharing not applicable to this article 
%as no datasets were generated or analysed during the current study.
%
%\noindent\textbf{Conflict of interests.}
%The authors declare that they have no conflict of interest.

%%%%%%%%%%%%%%%%%%%%%%%%%%%%%%%%%%%%%%%%%%%%%%%%%%%%%%%%%%%%%%%%%%%%%%%%%%%%%%%%%%%%%%%%%%
%%%%%%%%%%%%%%%%%%%%%%%%%%%%%%%%%%%%%%%%%%%%%%%%%%%%%%%%%%%%%%%%%%%%%%%%%%%%%%%%%%%%%%%%%%


\begin{thebibliography}{DLM98b}

%\bibitem[A+76a]{A+a}
%M.\ E.\ Ademollo, L.\ Brink, A.\ D'Adda, R.\ D'Auria, E.\ Napolitano, S.\ Sciuto, 
%E.\ Del Giudice, P.\ Di Vecchia, S.\ Ferrara, F.\ Gliozzi, R.\ Musto, R.\ Pettorino, 
%\emph{Supersymmetric strings and colour confinement}, 
%Phys.\ Lett.\ B, \textbf{62} (1976), no.\ 1, 105--110.

%\bibitem[A+76b]{A+b}
%M.\ E.\ Ademollo, L.\ Brink, A.\ D'Adda, R.\ D'Auria, E.\ Napolitano, S.\ Sciuto, 
%E.\ Del Giudice, P.\ Di Vecchia, S.\ Ferrara, F.\ Gliozzi, R.\ Musto, R.\ Pettorino, 
%J.\ H.\ Schwarz, 
%\emph{Dual string with $U(1)$ colour symmetry}, 
%Nucl.\ Phys.\ B, \textbf{111} (1976), no.\ 1, 77--110.

%\bibitem[ALY14]{ALY}
%T.\ Arakawa, C.\ H.\ Lam, H.\ Yamada,
%\emph{Zhu's algebra, $C_2$-algebra and $C_2$-cofiniteness of 
%parafermion vertex operator algebras},
%Adv.\ Math., \textbf{264} (2014), 261--295.

%\bibitem[BS93]{BS}
%P.\ Bouwknegt, K.\ Schoutens, 
%\emph{$\mathcal{W}$-symmetry in conformal field theory},
%Phys.\ Rep., \textbf{223} (1993), no.\ 4, 183--276.

\bibitem[CCS26]{CCS}
C.-W.\ Chen, S.-J.\ Cheng, U.R.\ Suh, \emph{Whittaker modules of central extensions of Takiff superalgebras and finite supersymmetric W-algebras}, Commun.\ Math.\ Phys. \textbf{407} (2026), article number 13.

\bibitem[DK05]{DK}
A.\ De Sole, V.\ G.\ Kac,
\emph{Finite vs affine W-algebras},
Japan.\ J.\ Math., \textbf{1} (2006), 137--261.

\bibitem[DLM97]{DLM0}
C.\ Dong, H.\ Li, G.\ Mason,
\emph{Certain Associative Algebras Similar to $U(sl_2)$ and Zhu's algebra $A(V_L)$},
J.\ Alg., \textbf{196} (1997), 532--551.

\bibitem[DLM98]{DLM}
C.\ Dong, H.\ Li, G.\ Mason,
\emph{Vertex operator algebras and associative algebras},
J.\ Algebra, \textbf{206} (1998), no.\ 1, 67--96.

\bibitem[DLM98b]{DLMb}
C.\ Dong, H.\ Li, G.\ Mason, 
\emph{Twisted representations of vertex operator algebras}, 
Math.\ Ann., \textbf{310} (1998), no.\ 3, 571--600.

%\bibitem[DLM00]{DLM2}
%C.\ Dong, H.\ Li, G.\ Mason,
%\emph{Modular-Invariance of Trace Functions in Orbifold Theory and Generalized Moonshine},
%Comm.\ Math.\ Phys., \textbf{214} (2000), 1--56.

\bibitem[EH19]{vEH}
J.\ van Ekeren, R.\ Heluani,
\emph{A short construction of the Zhu algebra},
J.\ Algebra, \textbf{528} (2019), 85--95.

\bibitem[EH21]{vEH2}
J.\ van Ekeren, R.\ Heluani,
\emph{Chiral homology of elliptic curves and the Zhu algebra},
Comm.\ Math.\ Phys., \textbf{386} (2021), 495--550.

\bibitem[FB04]{FB}
E.\ Frenkel, D.\ Ben-Zvi, 
\emph{Vertex Algebras and Algebraic Curves}, 2nd ed., 
Math.\ Surv.\ Monog., \textbf{88}, Amer.\ Math.\ Soc., Providence, RI, 2004.

\bibitem[FHL93]{FHL}
I.\ B.\ Frenkel, Y.-Z.\ Huang, J.\ Lepowsky, 
\emph{On axiomatic approaches to vertex operator algebras and modules}, 
Memoirs Amer.\ Math.\ Soc., \textbf{104}, Amer.\ Math.\ Soc., Providence, RI, 1993.

\bibitem[FZ92]{FZ}
I.\ B.\ Frenkel, Y.\ Zhu,
\emph{Vertex operator algebras associated to representations of affine and Virasoro algebras},
Duke Math.\ J., \textbf{66} (1992), no.\ 1, 123--168.

\bibitem[GG09]{GG}
M.\ R.\ Gaberdiel, T.\ Gannon, \emph{Zhu's algebra, the $C_2$ algebra, and twisted modules}, Contemp.\ Math. \textbf{497} (2009): 65.

\bibitem[G24]{G24a}
N.\ Genra,
\emph{Finite $\mathcal{W}$-algebras of $\mathfrak{osp}_{1|2n}$ and ghost centers},
Eur.\ J.\ Math., \textbf{10} (2024), 10--31. 

\bibitem[G]{G24b}
N.\ Genra,
\emph{Twisted Zhu algebras},
preprint (2024), 44pp.; arXiv:2409.09656v1.

\bibitem[GSS25]{GSS}
N.\ Genra, A.\ Song, U.R.\ Suh. \emph{Principal SUSY and Non-SUSY W-algebras and their Zhu Algebras}, Commun.\ Math.\ Phys. \textbf{406} (2025), 1--32.

\bibitem[GS88]{GS}
P.\ Goddard, A.\ Schwimmer, 
\emph{Factoring out free fermions and superconformal algebras},
Phys.\ Lett.\ B \textbf{214} (1988) 209--214.
 
\bibitem[H07]{H}
R.\ Heluani, \emph{SUSY Vertex Algebras and Supercurves}, 
Comm.\ Math.\ Phys., \textbf{275} (2007), 607--658.

\bibitem[HK07]{HK}
R.\ Heluani, V.\ G.\ Kac, \emph{Supersymmetric Vertex Algebras}, 
Comm.\ Math.\ Phys., \textbf{271} (2007), 103--178.

\bibitem[HVE16]{HV}
R.\ Heluani, J.\ Van Ekeren,
\emph{Characters of topological N=2 vertex algebras are Jacobi forms 
 on the moduli space of elliptic supercurves}, 
Adv.\ Math., \textbf{302} (2016), 551--627.

\bibitem[Hu05]{Hu}
Y.-Z.\ Huang, \emph{Differential equations, duality and modular invariance}, 
Comm.\ Contemp.\ Math., \textbf{7} (2005), no.\ 5, 649--706.

\bibitem[IG18]{IG}
K.\ Iohara, F.\ Gavarini. \emph{Singular Degenerations of Lie Supergroups of Type $D(2,1;a)$}, SIGMA., \textbf{14} (2018), 137.

\bibitem[K77]{K77}
V.\ G.\ Kac, \emph{Lie superalgebras}, Adv.\ Math., \textbf{26} (1977), 8--96.

\bibitem[K98]{K}
V.\ G.\ Kac, \emph{Vertex Algebras for Beginners}, 2nd ed., 
Univ.\ lect.\ ser., \textbf{10}, Amer. Math.\ Soc., Providence, RI, 1998.

\bibitem[KMP23]{KMPa}
V.\ G.\ Kac, P.\ M\"oseneder Frajria, P.\ Papi, 
\emph{Unitarity of minimal W-algebras and their representations I},
Comm.\ Math.\ Phys., \textbf{401} (2023), 79--145. 

\bibitem[KMP25]{KMP-II}
V.\ G.\ Kac, P.\ M\"oseneder Frajria, P.\ Papi, 
\emph{Unitarity of minimal W-algebras and their representations II: Ramond sector},
Jpn.\ J.\ Math., \textbf{20}, (2025), 169--281.

\bibitem[KMP]{KMPb} %[KMP25]
V.\ G.\ Kac, P.\ M\"oseneder Frajria, P.\ Papi, 
\emph{Extremal unital representations of big $N=4$ superconformal algebra},
preprint (2025), 51pp.; arXiv:2507.10130v1.

\bibitem[KRW03]{KRW}
V.\ G.\ Kac, S.-S.\ Roan, M.\ Wakimoto,
\emph{Quantum Reduction for Affine Superalgebras},
Comm.\ Math.\ Phys., \textbf{241} (2003), 307--342.

\bibitem[KvL89]{KL}
V.\ G.\ Kac, J.\ W.\ van de Leur,
\emph{On classification of superconformal algebras}, 
in: S.\ J.\ Gates, et al. (Eds.), 
\emph{Strings 88}, World Scientific, Singapore, 1989, pp.\ 77--106.

\bibitem[KW04]{KW}
V.\ G.\ Kac, M.\ Wakimoto,
\emph{Quantum reduction and representation theory of superconformal algebras},
Adv.\ Math., \textbf{185} (2004), 400--458.;
Erratum, 
Adv.\ Math., \textbf{193} (2005), 453--455.

\bibitem[KWan94]{KWa}
V.\ Kac, W.\ Wang,
\emph{Vertex Operator Superalgebras and Their Representations},
Contemp.\ Math.\ \textbf{175} (1994), 161--191.

%\bibitem[LL04]{LL}
%J.\ Lepowsky, H.-S.\ Li, 
%\emph{Introduction to Vertex Operator Algebras and Their Representations}, 
%Progr.\ Math., \textbf{227}, Birkh\"auser, Boston, 2004.

%\bibitem[Li99]{Li}
%H.-S.\ Li, 
%\emph{Determining Fusion Rules by $A(V)$-Modules and Bimodules},
%J.\ Alg., \textbf{212} (1999), 515--556.

\bibitem[LW25]{LW}
H.~Li, Q.~Wang
\emph{Associative superalgebras for Z-graded vertex operator superalgebra},
J.\ Alg., \textbf{676} (2025) 38--55.

\bibitem[Ma91]{Ma}
Y.\ I.\ Manin, \emph{Topics in Noncommutative Geometry},
Princeton Univ.\ Press., Princeton, NJ, 1991.

\bibitem[Mi96]{Mi96}
A.\ Milas, \emph{Tensor product of vertex operator algebras},
preprint (1996), 16pp.; arXiv q-alg/9602026v1.

\bibitem[Mi07]{Mi07}
A.\ Milas, \emph{Characters, supercharacters and Weber modular functions}, 
J.\ reine angew.\ Math., \textbf{608} (2007), 35--64.

\bibitem[MRS21]{MRS}
A.\ Molev, E.\ Ragoucy, U.R.\ Suh, \emph{Supersymmetric W-algebras}, Lett.\ Math.\ Phys., \textbf{111} (2021), 6.

\bibitem[PS13]{PS13}
E.\ Poletaeva, V.\ Serganova,
\emph{On finite $W$-algebras for Lie superalgebras in the regular case}, in: 
%V. Dobrev (editor) Proceedings of the IX International Workshop 
``Lie Theory and Its Applications in Physics'', 
%(Varna, Bulgaria, 20--26 June 2011). 
Springer Proc.\ Math.\ Stat., \textbf{36} (2013), 487--497.

\bibitem[PS16]{PS16}
E.\ Poletaeva, V.\ Serganova,
\emph{On Kostant's theorem for the Lie superalgebra $Q(n)$},
Adv.\ Math., \textbf{300} (2016), 320--359.

\bibitem[P02]{P}
A.\ Premet,
\emph{Special Transverse Slices and Their Enveloping Algebras},
Adv.\ Math., \textbf{170}(1) (2002), 1--55.

\bibitem[STP88]{STP}
A.\ Sevrin, W.\ Troost, A.\ Van Proeyen,
\emph{Superconformal algebras in two-dimensions with $N = 4$},
Phys.\ Lett., B \textbf{208} (1988), 447--450.

%\bibitem[V95]{V}
%A.\ Vaintrob, \emph{Conformal lie superalgebras and moduli spaces}, 
%J.\ Geom.\ Phys., \textbf{15} (1995), 109--122.

\bibitem[Wan93]{Wa}
W.\ Wang,
\emph{Rationality of Virasoro vertex operator algebras},
Int.\ Math.\ Res.\ Notices, (1993), no.\ 7, 197--211.

\bibitem[W25]{W}
E.\ Witten,
\emph{Instantons and the large {$\mathcal{N}=4$} algebra},
J.\ Phys.\ A: Math.\ Theor., \textbf{58} (2025), no.\ 3, 035403, 68pp.

\bibitem[Y22]{Y}
S.\ Yanagida, \emph{Li filtrations of SUSY vertex algebras},
Lett.\ Math.\ Phys., \textbf{112} (2022), article no.\ 103, 77pp.

%\bibitem[ZS15]{ZS}
%Y.\ Zeng, B.\ Shu,
%\emph{Finite W-superalgebras for basic Lie superalgebras},
%J.\ Algebra, \textbf{438} (2015), 188--234.

\bibitem[Zha14]{Zha}
L.\ Zhao,
\emph{Finite W-superalgebras for queer Lie superalgebras},
J.\ Pure Appl.\ Algebra, \textbf{218} (2014), no. 7, 1184--1194.

\bibitem[Zhu96]{Zhu}
Y.\ Zhu,
\emph{Modular Invariance of Characters of Vertex Operator Algebras},
J.\ Amer.\ Math.\ Soc., \textbf{9} (1996), no.\ 1, 237--302.
\end{thebibliography}
\end{document}